\definecolor{blue}{rgb}{0,0,0.7}
\definecolor{M}{rgb}{0.8,0,0.8}
\newtheorem{theorem}{Theorem}[section]
\newtheorem{lemma}[theorem]{Lemma}
\newtheorem{corollary}[theorem]{Corollary}
\newtheorem{example}[theorem]{Example}
\newtheorem{remark}[theorem]{Remark}
\newtheorem{definition}[theorem]{Definition}
\numberwithin{equation}{section}
\newcommand{\LL}{{\mathcal L}}
\newcommand{\pt}{\partial}
\newcommand{\bR}{\mathbb{R}}
\newcommand{\RR}{{\mathcal R}}
\newcommand{\EE}{{\mathcal E}}
\newcommand{\FF}{{\mathcal F}}
\newcommand {\beq} {\begin{equation}}
\newcommand {\eeq} {\end{equation}}
\newcommand{\TOL}{{\it TOL}}
\newcommand{\ind}{\hspace*{0.5cm}}
\newcommand{\tcb}[1]{\textcolor{blue}{#1}}
\title{A posteriori error analysis for variable-coefficient multiterm time-fractional  subdiffusion  equations
\thanks{The research of Natalia Kopteva is supported in part by Science Foundation Ireland under grant 18/CRT/6049.
The research of Martin Stynes is supported in part by the National Natural Science Foundation of China under grants 12171025 and NSAF-U1930402.}}
\author{Natalia Kopteva\thanks{Department of Mathematics and Statistics, University of Limerick, Limerick V94 T9PX, Ireland  (\texttt{natalia.kopteva@ul.ie}).}
\and
Martin Stynes\thanks{Applied and Computational Mathematics Division, Beijing Computational Science Research Center,
Beijing 100193, China (\texttt{m.stynes@csrc.ac.cn}). Corresponding author.}
}
\begin{document}

\maketitle

\begin{abstract}
An initial-boundary value problem of subdiffusion type is considered;  the temporal component of the differential operator has the form $\sum_{i=1}^{\ell}q_i(t)\, D _t ^{\alpha_i} u(x,t)$, where the $q_i$ are continuous functions, each $D _t ^{\alpha_i}$ is a Caputo derivative, and the $\alpha_i$ lie in $(0,1]$. Maximum/comparison principles for this problem are proved under weak hypotheses.  A new positivity result for the multinomial Mittag-Leffler function is derived. A posteriori error bounds are obtained in $L_2(\Omega)$ and $L_\infty(\Omega)$, where the spatial domain $\Omega$ lies in $\bR^d$ with $d\in\{1,2,3\}$. An adaptive algorithm based on this theory is tested extensively and shown to yield accurate numerical solutions on the meshes generated by the algorithm.
\end{abstract}

\noindent\emph{Keywords:} multiterm time-fractional, variable coefficient, subdiffusion, a posteriori error analysis \\
\noindent\emph{AMS MSC Classification:}65M15

\maketitle

\section{Introduction}
The numerical solution of \tcb{fractional} differential equations (FDEs) is currently the subject of much research (see for example \cite{JLZ19,StyL1survey}), since such equations model many physical processes but their exact solution is generally impossible. Of course this is also true for classical integer-order differential equations, where mesh-adaptive numerical methods based on a posteriori error analyses have played a significant role for many years. Methods of this type have very general usefulness since they require no knowledge of the properties of the unknown solution to the problem. But for FDEs, there has been little progress in theory-based adaptive numerical methods; their development has been impeded by the absence of a satisfactory a posteriori theory for their error analysis.

As it is often difficult to analyse the regularity and other fundamental properties of the unknown solutions to FDEs, it can be impossible to give any rigorous a priori analysis error analysis of numerical methods for their solution. This makes it even more desirable to devise an a posteriori error analysis that does not require any information about the unknown solution.

Recently a new and very promising a posteriori error estimation methodology for FDEs appeared in \cite{KopAML22}, which considered initial-value and initial-boundary value time-fractional  subdiffusion  problems whose differential equations contained a single temporal derivative of fractional order.  It is clearly desirable to extend this theory to time-fractional FDEs containing several fractional derivatives, as these offer more powerful modelling capabilities. Our primary aim in the current paper is to develop the a posteriori theory for this extension and to show experimentally that an adaptive algorithm based on our theory is able to compute accurate numerical solutions to problems whose solutions have singularities (as is usually the case with FDEs). It should be noted that these accurate solutions are computed on nonuniform meshes that are constructed automatically by the algorithm --- the user does not have to provide any special mesh, nor input any attributes of the unknown solution.

\tcb{The relationship between our paper, which studies a multiterm fractional derivative operator, and  \cite{KopAML22}, where only a single fractional derivative appears, is the following.
 Section~\ref{sec:L2apost} below points out similarities between Lemma~\ref{lem_aux}, Theorem~\ref{the_L2} and Corollary~\ref{cor:L2boundEE0} and results from~[15]; but  while Corollary~\ref{cor:L2boundEE1} is analogous to the second bound in \cite[Corollary 2.4]{KopAML22}, the proof of  Corollary~\ref{cor:L2boundEE1} is much deeper since it involves hypergeometric functions whereas  \cite{KopAML22} needed only elementary functions.
Outside Section~\ref{sec:L2apost} there are significant differences between our paper and~\cite{KopAML22} --- see Theorem~\ref{thm:wlowerbound}, Remark~\ref{rem:w'(t)}, Lemma~\ref{lem:beta<1}, eq.~\eqref{w1}; Lemma~\ref{lem:special} would be trivial in the single-term case; the multinomial Mittag-Leffler function of Definition~\ref{def:multiML} that is needed for the multiterm case is less tractable than the more familiar two-parameter Mittag-Leffler function that suffices for the single-term case   --- thus all of the rather technical Appendix~\ref{sec:app} is new. }

The paper is structured as follows. Section~\ref{sec:multitermprob} describes the multiterm time-fractional initial-boundary value problem of  subdiffusion  type that will be studied. In Section~\ref{sec:nonnegative}, maximum/comparison principles and some of their consequences are derived for the associated fractional initial-value problem; existence of a solution for that problem is also discussed. A posteriori error bounds for $L_2(\Omega)$, where the spatial domain $\Omega$ lies in $\bR^d$ with $d\in\{1,2,3\}$, are established in Section~\ref{sec:L2apost}.  A variant of this theory in Section~\ref{sec:Linf} gives a posteriori error bounds in  $L_\infty(\Omega)$. Then in Section~\ref{sec:L1method} we perform extensive numerical experiments to demonstrate the effectiveness and reliability of the theory of Sections~\ref{sec:L2apost} and~\ref{sec:Linf}. Finally, an Appendix proves a new positivity result for the multinomial Mittag-Leffler function, then uses it to give an alternative version of a result from Section~\ref{sec:nonnegative}.

\subsection{The multiterm time-fractional  subdiffusion  problem}\label{sec:multitermprob}
We shall study the multiterm time-fractional  subdiffusion  problem
\begin{subequations} \label{problem}
\begin{align}
\sum_{i=1}^{\ell}\bigl[q_i(t)\, D _t ^{\alpha_i} u(x,t)\bigr]+ \LL u(x,t) &=f(x,t)\quad \text{for} \;\; (x,t)\in \Omega\times(0,T], \label{pde} \\
\intertext{ with initial and boundary conditions}
u(x,0)=u_0(x)\quad \text{for} \;\;x\in \Omega, \quad
u(x,t)&=0\quad\text{for} \;\;x\in \partial\Omega \;\;\text{and} \;\;0<t\leq T.  \label{bc}
\end{align}
\end{subequations}
Here
$\ell$ be a positive integer, the constants $\alpha_i$ (for $i=1,2,\dots, \ell$) satisfy
\begin{equation}\label{alphai}
0<\alpha_\ell<...<\alpha_2< \alpha_1 \le 1,
\end{equation}
while each $q_i\in C[0,T]$ with
\beq
\sum_{i=1}^{\ell}q_i(t)>0\quad \text{and} \quad q_i(t)\ge 0,\;\; i=1,\ldots,\ell,  \label{q_i}
\quad \text{for} \;\; t\in[0,T].
\eeq
This problem is posed in a bounded Lipschitz domain  $\Omega\subset\bR^d$ (where $d\in\{1,2,3\}$), and involves
a spatial linear second-order elliptic operator~$\LL$.
Each Caputo temporal fractional derivative $D_t^{\alpha_i}$ is  defined \cite{Diet10} for $0<\alpha_i<1$ and $t>0$ by
\begin{equation}\label{CaputoEquiv}
D_t^{\alpha_i} u := J_t^{1-\alpha_i}(\pt_t u),\qquad
J_t^{1-\alpha_i} y(\cdot,t) :=  \frac1{\Gamma(1-\alpha_i)} \int_{0}^t(t-s)^{-\alpha_i}\, y(\cdot, s)\, ds,
\end{equation}
where $\Gamma(\cdot)$ is the Gamma function, and $\pt_s$ denotes the partial derivative in~$s$.
From \cite[Theorem 2.20 and Lemma 3.4]{Diet10} it follows that $\lim_{\alpha\rightarrow 1^-}D_t^{\alpha} u(x,t)=\pt_t u(x,t)$ for each $(x,t)\in\Omega\times (0,T]$ when $u(x,\cdot)\in C^1[0,T]$, so for $\alpha_1=1$ we take $D_t^{\alpha_1} u=D_t^{1} u:=\pt_t u$.

\tcb{
\begin{remark}
One might wonder whether the presence of lower-order fractional derivatives in the differential operator would invalidate the above presumption that $u(x,\cdot)\in C^1[0,T]$ if  $\alpha_1=1$, but when $\alpha_1=1$ (and $q_1(t)>0$ for all $t$) and the data  are continuous, in Lemma~\ref{lem:special} we prove that the solution of the associated initial-value problem does lie in $C^1[0,T]$. See also Remark~\ref{rem:w'(t)}, where it is shown that  if $\alpha_1=1$ then at $t=0$ the solution is better behaved than if $\alpha_1<1$. Furthermore, in the case of constant coefficients $q_i$, when $\alpha_1=1$ one can deduce that the solution of the initial-value problem lies in $C^1[0,T]$ from the explicit solution given by Remark~\ref{rem:IVPconstsoln} and eq.~\eqref{w1}, though we omit the details.
\end{remark}
}

In the case where each $q_i$ is a positive constant and $\alpha_1<1$, existence of a solution to \eqref{problem} follows from \cite[Theorems 2.1 and 2.2]{LLY15}.
For the general case of variable $q_i$ satisfying \eqref{q_i}, one can show uniqueness of a solution to \eqref{problem} by imitating the argument of \cite[Theorem 4]{Luch11}.

The problem \eqref{problem} with constant $q_i$ was considered in \cite{ChenSty2022,LLY15} and their references.
\tcb{Two-term fractional differential equations (i.e., $\ell=2$ in~\eqref{q_i}) appear in \cite{MKS98} modelling anomalous transport and in \cite{SBMB03} modelling solute transport in aquifers. In~\cite[eq.~(14)]{SBMB03}, the time-fractional PDE
\begin{equation}\label{aquifer}
\partial_t C + \beta D_t^\alpha C - \LL C =0
\end{equation}
is used to model solute transport in aquifers, where $C= C(x,t)$ denotes concentration and $\alpha\in (0,1)$. This is the particular case of our fractional PDE~\eqref{pde} where $\ell=2$, $\alpha_1=1$ and $ \alpha_2=\alpha$, with $q_1=1$ and $q_2=\beta>0$ so~\eqref{q_i} is satisfied. The ``fractal immobile capacity" $\beta$ in~\eqref{aquifer} may be time-dependent; for example in~\cite[Figure~4]{SBMB03} the authors take $\beta = 0.08 d^{-0.67}$ where $d$ is time measured in days. Thus it is of interest to consider time-dependent $q_i$ in~\eqref{problem}.  }

\tcb{
Alternatively,  to incorporate uncertainties in the data  of the physical problem, one can use a variably distributed-order subdiffusion problem like that of~\cite{YZW20}, where the distributed-order fractional derivative is defined by $\tilde D_t^\rho u(x,t) = \int_0^1 \rho(\alpha) D_t^\alpha u(x,t)\,d\alpha$ with $\rho=\rho(\alpha)$  a probability density function. To handle this numerically one must apply a quadrature rule to $\tilde D_t^\rho u$, which can lead to a PDE such as~\eqref{pde} that satisfies the hypothesis~\eqref{q_i}.
}

It appears that  \eqref{problem} with variable $q_i(t)$  has never been \tcb{rigorously} studied in the \tcb{mathematics} literature. This variant, however,  is of some interest since it is a simple (and hence attractive) alternative to models with variable-fractional-order equations, which
have received a lot of attention in recent years (see \cite{ZW_IMAJNA2021} and its references).

\smallskip

\noindent{\it Notation.} We use the standard inner product $\langle\cdot,\cdot\rangle$ and the norm $\|\cdot\|$
in the space $L_2(\Omega)$, as well as the standard spaces 
$L_\infty(\Omega)$, $H^1_0(\Omega)$,
$L_{\infty}(0,t;\,L_2(\Omega))$, and $W^{1,\infty}(t',t'';\,L_2(\Omega))$ (see \cite[Section 5.9.2]{Evans10} for the notation used for functions of $x$ and~$t$).
The notation $v^+:=\max\{0,\,v\}$ is used for the positive part of a generic function~$v$. For convenience we sometimes write
\beq\label{DD_t_def}
D_t^{\bar\alpha} :=  \sum_{i=1}^{\ell}q_i(t) D _t ^{\alpha_i}.
\eeq

\section{Nonnegative solutions of certain initial-value problems}\label{sec:nonnegative}
Our a posteriori analysis will rely on the property that the solutions of certain multiterm fractional  initial-value problems are nonnegative; we derive this result in this section after presenting a reformulation of the definition~\eqref{CaputoEquiv} of the fractional derivative $D_t^{\alpha_i}y(\cdot, t)$ that can be applied to a more general class of functions.

For simplicity, in this section we write $y(t)$ instead of $y(x, t)$ since  the dependence on $x$ is irrelevant here.

\subsection{Function regularity and reformulated Caputo derivative}\label{sec:fnregular}
In~\eqref{CaputoEquiv} one can integrate by parts  to reformulate the definition of $D_t^{\alpha_i}y(t)$ for $\alpha_i<1$ as
\begin{equation}\label{altCaputo}
\Gamma(\tcb{1-\alpha_i})\,D_t^{\alpha_i} y(t)
=t^{-\alpha_i} \left[y(t)-y(0)\right]  + \int_{0}^t\!\alpha_i(t-s)^{-\alpha_i-1}\, \left[y(t)- y(s)\right]\, ds
\end{equation}
for $0<t \le T$.
This reformulation appeared already in~\cite[eq.~(2.4)]{KopAML22}, and in, e.g., \cite[Lemma 3.1]{BHY15}, \cite[Lemma 2.10]{JinBook},  \cite[Proof of Theorem 1]{Luch09}, and \cite[Theorem 5.2]{Vai16}. We will show that it has the advantage that it permits the use of less smooth functions~$y$ than \eqref{CaputoEquiv}; this attribute is needed, for example, to prove Lemma~\ref{lem_aux} below.

Recall that~\eqref{altCaputo} was obtained from~\eqref{CaputoEquiv} by integration by parts. From the proof of the integration by parts formula, one sees that this calculation  is valid if for each $t'\in [0,t)$  the function $\psi(t;\cdot)$ defined by $\psi(t;s) := (t-s)^{-\alpha_i}[y(s)-y(t)]$ is absolutely continuous on $[0,t']$ and satisfies $\lim_{t'\to t^-}\psi(t;t')=0$, because one can  integrate by parts $\int_{0}^{t'}(t-s)^{-\alpha_i}\, y'(s)\, ds$, then take $\lim_{t'\to t^-}$.

For example, if $y$ lies in the standard H\"older space $C^\beta[0,T]$ for some $\beta>\alpha_i$, then this derivation of~\eqref{altCaputo}  from~\eqref{CaputoEquiv} is valid; see \cite[Lemma 3.1]{BHY15}.

As in  \cite{KopAML22}, we consider now a more general class of functions for which the definition~\eqref{CaputoEquiv} is unsuitable but~\eqref{altCaputo} can be used.

Let us assume that
\begin{equation}\label{ycondition}
y\in C[0,T]\cap W^{1,\infty}(\epsilon,t)\ \text{ for all }\epsilon, t\ \text{ satisfying }0<\epsilon<t\le T.
\end{equation}
The hypothesis that $y\in W^{1,\infty}(\epsilon,t)$ is equivalent to assuming that $y$ is Lipschitz continuous on each interval $[\epsilon, t]$; see~\cite[p.154]{GTbook}.
\tcb{If $\alpha_1=1$, then we strengthen~\eqref{ycondition} by assuming that $y\in C[0,T]$ and $y'$ is a left-continuous function on~$(0,T]$ that may have jump discontinuities; see Section~\ref{sec:IVP}.}

Fix $t\in (0,T]$. The integral $ \int_0^{t/2}\!\alpha_i(t-s)^{-\alpha_i-1}\, \left[y(t)- y(s)\right]\, ds$ is defined and finite as its integrand lies in $C\left[0, \frac12 t\right]$. For  $ \int_{t/2}^t\!\alpha_i(t-s)^{-\alpha_i-1}\, \left[y(t)- y(s)\right]\, ds$, since $y\in W^{1,\infty}\left(\frac12t,t\right)$ one has $\vert y(t)- y(s) \vert \le C(t-s)$ with a constant $C$ that depends on $t$ but is independent of $s$, which implies that the integral exists and is finite. Thus for all $y$ satisfying~\eqref{ycondition}, we can define $D_t^{\alpha_i} y(t)$   by~\eqref{altCaputo}.

The next two remarks describe weakenings of the hypothesis \eqref{ycondition}  on the function~$y$ that still  allow us to define $D_t^{\alpha_i} y(t)$  by~\eqref{altCaputo}.

\begin{remark}\label{rem:weakerHolder}
One could replace $y\in W^{1,\infty}(\epsilon,t)$ in \eqref{ycondition} by $y\in C^{\beta}(\epsilon, T]$ for all $\epsilon, t$  satisfying $0<\epsilon<t\le T$, where $\beta$ is any constant satisfying $\alpha_i < \beta \le 1$ and  $C^{\beta}(\epsilon, T]$ is a standard H\"older space.
\end{remark}

\begin{remark}[initial discontinuity in $y$]\label{rem:limt0}
Note that $y\in W^{1,\infty}(\epsilon,t)$ for all $\epsilon, t$ satisfying $0<\epsilon<t\le T$ implies $y\in C(0,T]$.
In \eqref{ycondition} one could replace the hypothesis that $y\in C[0,T]$ by an assumption that  $y\in L_\infty(0,T)$, and still work with~\eqref{altCaputo}. In particular we can replace $C[0,T]$ in (2.2) by an assumption that $\lim_{t\to 0^+}y(t)$ exists;  this will be useful in the forthcoming error analysis.
\end{remark}

 \subsection{The initial-value problem}\label{sec:IVP}
 Consider the initial-value problem
\beq\label{IVP}
D_t^{\bar\alpha}  w(t) + \lambda w(t) = v(t)\ \text{ for }0<t\le T,  \quad w(0)=w_0,
\eeq
where we assume that the parameter $\lambda\ge 0$. (We shall use the notation $\tcb{u(x,t)}$ for the solution of~\eqref{problem} and $w(t)$ for the solution of~\eqref{IVP}.)

In the next lemma we specify hypotheses allowing, for example, the possibility that $w$ is a piecewise polynomial.
Given a function $g$ that has a jump discontinuity at a finite number of points in $(0,T)$ but is continuous otherwise on $(0,T]$, at any point of discontinuity $\tau$ we take $g(\tau)= \lim_{t\to \tau^-}g(t)$. That is, we regard $g$ as left-continuous on $(0,T]$.

\begin{lemma}[Comparison principle for the initial-value problem]\label{lem:v0w0}
Consider \eqref{IVP} where $v\ge 0$ may have a finite number of jump discontinuities in $(0,T)$ and is left-continuous on $(0,T]$.
Suppose that $w$~satisfies the regularity hypothesis \eqref{ycondition}. Define $D_t^{\alpha_i} w$ by~\eqref{altCaputo} if $\alpha_i<1$. If $\alpha_1=1$,  suppose also that $w'$ may have jump discontinuities but is a left-continuous function on $(0,T]$. Assume that $w_0\ge 0$.
Then $w(t)\ge 0$ for $t\in[0,T]$.
\end{lemma}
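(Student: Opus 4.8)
The plan is to argue by contradiction, assuming that $w$ takes a negative value somewhere in $(0,T]$, and to exploit the reformulated Caputo derivative \eqref{altCaputo} at a point where a suitable auxiliary function attains a negative minimum. A direct contradiction at a minimum point of $w$ itself is delicate because the terms $t^{-\alpha_i}[w(t)-w(0)]$ in \eqref{altCaputo} are sign-indefinite relative to $\lambda w(t)$; so first I would introduce the shifted function $z(t) := w(t) + \varepsilon(1+t)$ (or $z(t):=w(t)+\varepsilon$) for a small parameter $\varepsilon>0$, and suppose for contradiction that $z$ becomes negative. Since $z$ inherits \eqref{ycondition} and $z(0)=w_0+\varepsilon>0$ while $z$ is continuous on $[0,T]$, there is a first point $t^\ast\in(0,T]$ with $z(t^\ast)=0$ (using left-continuity to handle the jump case; here one takes $t^\ast$ as the infimum of the closed set $\{t: z(t)\le 0\}$, noting $z(t)>0$ on $[0,t^\ast)$ and $z(t^\ast)\le 0$).

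At such a first-zero point $t^\ast$ we have $z(t^\ast)=0$, hence $z(s)\ge z(t^\ast)$ for all $s\in[0,t^\ast]$, i.e.\ $z(t^\ast)-z(s)\le 0$. Then from \eqref{altCaputo}, for each $i$ with $\alpha_i<1$,
\[
\Gamma(1-\alpha_i)\,D_t^{\alpha_i}z(t^\ast)
= (t^\ast)^{-\alpha_i}\bigl[z(t^\ast)-z(0)\bigr]
+\int_0^{t^\ast}\!\alpha_i (t^\ast-s)^{-\alpha_i-1}\bigl[z(t^\ast)-z(s)\bigr]\,ds \le 0,
\]
since $z(t^\ast)-z(0)=-z(0)<0$ and the integrand is $\le 0$. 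For the case $\alpha_1=1$, the left-continuity of $w'$ (hence of $z'$) at $t^\ast$ together with $z(t)>0=z(t^\ast)$ for $t<t^\ast$ forces $z'(t^\ast)=\lim_{t\to t^{\ast-}}z'(t)\le 0$, so $D_t^{1}z(t^\ast)=z'(t^\ast)\le 0$ as well. Multiplying by $q_i(t^\ast)\ge 0$ and summing, and using $\sum_i q_i(t^\ast)>0$ only for strict statements, gives $D_t^{\bar\alpha}z(t^\ast)\le 0$. On the other hand, $D_t^{\bar\alpha}z(t^\ast) = D_t^{\bar\alpha}w(t^\ast) + \varepsilon\,D_t^{\bar\alpha}(1+t)\big|_{t^\ast}$; the first term equals $v(t^\ast)-\lambda w(t^\ast) = v(t^\ast)-\lambda(-\varepsilon(1+t^\ast)) = v(t^\ast)+\lambda\varepsilon(1+t^\ast)\ge 0$, and $D_t^{\bar\alpha}(1+t)\big|_{t^\ast}>0$ because each $D_t^{\alpha_i}(1+t) = t^{1-\alpha_i}/\Gamma(2-\alpha_i)>0$ and $D_t^1(1+t)=1$, while $\sum_i q_i(t^\ast)>0$. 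Hence $D_t^{\bar\alpha}z(t^\ast)>0$, a contradiction. Therefore $z\ge 0$ on $[0,T]$, and letting $\varepsilon\to 0^+$ yields $w\ge 0$ on $[0,T]$.

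The main obstacle I anticipate is the rigorous handling of the ``first zero'' point $t^\ast$ in the presence of jump discontinuities in $v$ (and possibly in $w'$ when $\alpha_1=1$): one must check that $t^\ast$ is well defined, that $z$ stays strictly positive on $[0,t^\ast)$, and that the inequality $D_t^{\bar\alpha}z(t^\ast)\le 0$ is still valid at $t^\ast$ under the left-continuity convention. A subtle point is that \eqref{altCaputo} defines $D_t^{\alpha_i}z(t^\ast)$ in terms of the pointwise value $z(t^\ast)$ and the integral against $z(s)$ over $[0,t^\ast]$, so it is insensitive to any jump at $t^\ast$ itself; this is precisely why the reformulation \eqref{altCaputo} — rather than \eqref{CaputoEquiv} — is the right tool here, as flagged in the paragraph preceding the lemma. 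A secondary technical check is that $D_t^{\bar\alpha}w(t^\ast)$ is legitimately equal to $v(t^\ast)-\lambda w(t^\ast)$: this holds because $w$ satisfies \eqref{IVP} pointwise on $(0,T]$ and both sides are left-continuous, so the identity persists at $t^\ast$.
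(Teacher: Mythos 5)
Your proof is correct, and while it rests on the same basic mechanism as the paper's --- argue by contradiction at an extremal point and read off the sign of each $D_t^{\alpha_i}$ from the reformulation \eqref{altCaputo} --- it differs in a genuinely useful way. The paper works with a global negative minimum $t_0$ of $w$ itself, obtains strictness from $D_t^{\alpha_i}w(t_0)<0$ for the fractional terms, and must then patch the degenerate case where $\alpha_1=1$ and $q_i(t_0)=0$ for $i\ge 2$ (there only $w'(t_0)+\lambda w(t_0)\le 0$ is available) by a separate exponential substitution $\hat w(t)=e^{-\mu t}w(t)$. Your perturbation $z=w+\varepsilon(1+t)$ relocates the source of strictness from the solution to the barrier: since $D_t^{\alpha_i}(1+t)\big|_{t^*}=(t^*)^{1-\alpha_i}/\Gamma(2-\alpha_i)>0$ for every $\alpha_i\in(0,1]$ and $\sum_i q_i(t^*)>0$, you get $D_t^{\bar\alpha}z(t^*)>0$ uniformly, so the exceptional case disappears and no change of variable is needed; the price is the extra (routine) limit $\varepsilon\to 0^+$. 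Two small remarks: your parenthetical alternative $z=w+\varepsilon$ would \emph{not} close the argument, since a constant has vanishing $D_t^{\alpha_i}$ under \eqref{altCaputo} and the term $-\lambda w(t^*)=\lambda\varepsilon$ gives nothing when $\lambda=0$, so the affine barrier is essential; and in justifying $z'(t^*)\le 0$ when $\alpha_1=1$ you should say explicitly that you use $z(t^*)-z(t)=\int_t^{t^*}z'(s)\,ds$ (valid by the $W^{1,\infty}$ hypothesis) together with left-continuity of $z'$, which is exactly the paper's own one-line justification. Your closing observations about why \eqref{altCaputo} rather than \eqref{CaputoEquiv} is the right tool, and why the equation persists at $t^*$ under the left-continuity convention, are accurate.
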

\begin{proof}
Suppose that the result is false. Then since $w\in C[0,T]$ and $w(0)\ge 0$, there exists a point $t_0\in (0,T]$ such that $w(t_0)<0 \le w(0)$ and $w(t_0)\le w(t)$ for all~$t\in [0,T]$. From~\eqref{altCaputo} one sees immediately that each  $D_t^{\alpha_i} w(t_0) < 0$  if $\alpha_i<1$, while if $\alpha_i=1$ then $w'(t_0)\le 0$ (consider the interval $[0, t_0]$ and use the left-continuous property of $w'(t)$ at $t_0$). Hence $D_t^{\bar\alpha}  w(t_0) + \lambda w(t_0)<0 \le v(t_0)$, so $w$ cannot be a solution of~\eqref{IVP}.
(The case where $q_i(t_0)=0$ for $i=2,3,\dots,\ell$ is exceptional, as we then get only $w'(t_0) + \lambda w(t_0) \le 0$; to derive a contradiction, one can make a change of variable $\tilde w(t) := e^{-\mu t}w(t)$ for suitable~$\mu$ as in \cite[Section 2]{KopMaxPrin} and consider the initial-value problem satisfied by~$\tilde w$.)
\end{proof}

The following extension of Lemma~\ref{lem:v0w0} weakens the requirement that $w\in C[0,T]$. It will be needed to deal with the discontinuous function~$\EE_0$ of  Section~\ref{sec:L2apost}.

\begin{corollary}\label{cor:wnotcont}
In Lemma~\ref{lem:v0w0}, replace the hypothesis that $w\in C[0,T]$ by $\lim_{t\to 0^+}w(t)\ge 0$ exists. Then $w(t)\ge 0$ for $t\in(0,T]$.
\end{corollary}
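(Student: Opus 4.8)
The plan is to reduce the statement to Lemma~\ref{lem:v0w0} by simply filling in a value for $w$ at $t=0$. The regularity hypotheses retained from Lemma~\ref{lem:v0w0} include $w\in W^{1,\infty}(\epsilon,t)$ for all $0<\epsilon<t\le T$, so $w$ is continuous on $(0,T]$, and by assumption $L:=\lim_{t\to0^+}w(t)$ exists with $L\ge0$. Define $\tilde w(0):=L$ and $\tilde w(t):=w(t)$ for $t\in(0,T]$; then $\tilde w\in C[0,T]$. By Remark~\ref{rem:limt0}, when the reformulated Caputo derivative \eqref{altCaputo} is applied to a function merely known to have a limit at $t=0$, that limit plays the role of the value at $0$, so for each $t\in(0,T]$ one has $D_t^{\alpha_i}\tilde w(t)=D_t^{\alpha_i}w(t)$, and hence $\tilde w$ solves the initial-value problem \eqref{IVP} with nonnegative initial value $\tilde w(0)=L$. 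Now $\tilde w$ satisfies every hypothesis of Lemma~\ref{lem:v0w0} --- continuity on $[0,T]$, the regularity \eqref{ycondition}, the sign and left-continuity conditions on $v$ (and on $w'$ when $\alpha_1=1$), and nonnegativity of the initial datum --- so that lemma gives $\tilde w(t)\ge0$ on $[0,T]$, and therefore $w(t)=\tilde w(t)\ge0$ for $t\in(0,T]$.

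Alternatively, one can rerun the contradiction argument of Lemma~\ref{lem:v0w0} directly, the only modification being the location of a hypothetical negative minimum of $w$. Suppose $w(t_1)<0$ for some $t_1\in(0,T]$. Since $L\ge0>\tfrac12 w(t_1)$, pick $\epsilon\in(0,t_1)$ with $w(t)>\tfrac12 w(t_1)$ for all $t\in(0,\epsilon]$; then the continuous function $w$ attains its minimum over the compact interval $[\epsilon,T]$ at some $t_0\in[\epsilon,T]\subset(0,T]$, and this $t_0$ is in fact a global minimiser of $w$ on $(0,T]$ with $w(t_0)\le w(t_1)<0$. From here the computation is exactly as in Lemma~\ref{lem:v0w0}: for $\alpha_i<1$ the first term of \eqref{altCaputo} at $t_0$ is strictly negative (since $w(t_0)<0\le L$) and the integrand is nonpositive, so $D_t^{\alpha_i}w(t_0)<0$; if $\alpha_1=1$ then $w'(t_0)\le0$ by the left difference quotient and the left-continuity of $w'$; hence $D_t^{\bar\alpha}w(t_0)+\lambda w(t_0)<0\le v(t_0)$, a contradiction, with the exceptional case $\alpha_1=1$ and $q_i(t_0)=0$ for $i\ge2$ handled by the substitution $\hat w(t)=e^{-\gamma t}w(t)$ as in Lemma~\ref{lem:v0w0}.

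The only genuinely new point --- and hence the one I would be most careful about --- is the discontinuity of $w$ at the origin. In the first approach this reduces to verifying that replacing the value $w(0)$ by $\lim_{t\to0^+}w(t)$ leaves the Caputo derivatives on $(0,T]$ unchanged, which is precisely what Remark~\ref{rem:limt0} provides; in the second it reduces to using the existence of the one-sided limit to separate the negative minimum from a neighbourhood of $t=0$ before invoking compactness on a closed subinterval. Both steps rely essentially on working with the reformulation \eqref{altCaputo} rather than the classical definition \eqref{CaputoEquiv}, which is the whole reason \eqref{altCaputo} was introduced.
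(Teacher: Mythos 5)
Your second argument is correct and is essentially the paper's own proof: one reruns the contradiction argument of Lemma~\ref{lem:v0w0}, using the existence of the one-sided limit at $0$ to confine a hypothetical negative minimum to a compact subinterval $[\epsilon,T]\subset(0,T]$, and everything else goes through unchanged. One small correction there: the first term of \eqref{altCaputo} at $t_0$ is $t_0^{-\alpha_i}\,[w(t_0)-w(0)]=t_0^{-\alpha_i}\,[w(t_0)-w_0]$, so its strict negativity comes from the retained hypothesis $w_0\ge 0$, not from $L=\lim_{t\to 0^+}w(t)\ge 0$; since both quantities are nonnegative the argument survives, but the value that actually enters the formula is $w_0$.

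Your first argument, by contrast, has a genuine error, and it stems from the same confusion. The reformulated derivative \eqref{altCaputo} contains the term $t^{-\alpha_i}\,[y(t)-y(0)]$, so it depends on the \emph{assigned} value $y(0)$ and not only on $y|_{(0,T]}$. Redefining $\tilde w(0):=L$ therefore changes the Caputo derivative: $\Gamma(1-\alpha_i)\bigl[D_t^{\alpha_i}\tilde w(t)-D_t^{\alpha_i}w(t)\bigr]=t^{-\alpha_i}(w_0-L)$, so $\tilde w$ solves \eqref{IVP} with right-hand side $v(t)+(w_0-L)\sum_i q_i(t)\,t^{-\alpha_i}/\Gamma(1-\alpha_i)$, which need not be nonnegative when $w_0<L$ --- and that is exactly the case the corollary exists to handle. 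Its intended application is the barrier $\EE_0$ of Corollary~\ref{cor:L2boundEE0}, with $\EE_0(0)=0$ and $\lim_{t\to0^+}\EE_0(t)=1$; there the key identity $D_t^{\alpha_i}\EE_0(t)=t^{-\alpha_i}/\Gamma(1-\alpha_i)$ uses the assigned value $0$, whereas your replacement turns $\EE_0$ into the constant function $1$, whose Caputo derivative vanishes, destroying the barrier. Remark~\ref{rem:limt0} only asserts that \eqref{altCaputo} remains well defined for such $y$; it does not license substituting the limit for $y(0)$ in the formula. So discard the first argument and keep the second.
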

\begin{proof}
Recalling Remark~\ref{rem:limt0}, one can use the same argument as for Lemma~\ref{lem:v0w0}, with minor modifications.
\end{proof}

We now use Lemma~\ref{lem:v0w0} to derive a stronger bound on $w$.
First, recall the well-known two-parameter Mittag-Leffler function
$E_{\alpha,\beta}(s) := \sum_{k=0}^\infty s^k/\Gamma(\alpha k+\beta)$.

\begin{theorem}\label{thm:wlowerbound}
Consider \eqref{IVP}, where $v\in C[0,T]$ with $v\ge 0$ and $w_0\ge 0$.
Suppose that $w$~satisfies the regularity hypothesis \eqref{ycondition}. Define $D_t^{\alpha_i} w$ by~\eqref{altCaputo} if $\alpha_i<1$. If $\alpha_1=1$,  suppose also that $w\in C^1(0,T]$.
Set $\underline q_j = \min_{t\in [0,T]}q_j(t)$ for $j=1,\dots,\ell$.
Then
\begin{equation}\label{wtEa}
w(t)\ge w_0 \max_{j=1,\dots, \ell}E_{\alpha_j,1}(-\lambda t^{\alpha_j}/\underline q_j)\quad\text{for }t\in[0,T],
\end{equation}
where one sets $E_{\alpha_j,1}(-\lambda t^{\alpha_j}/\underline q_j) \equiv 0$ if $\underline q_j=0$.
\end{theorem}
\begin{proof}
Fix $j\in \{1,\dots,\ell\}$. Define the barrier function~$B_j$ by $\underline q_jD _t ^{\alpha_j} B_j(t) + \lambda B_j(t) = 0$ for $0<t\le T$, $B_j(0)=w_0$. Then $B_j(t) = w_0 E_{\alpha_j,1}(-\lambda t^{\alpha_j}/\underline q_j) $ by \cite[Example 3.1]{JinBook}; this function is completely monotonic \cite[Theorem 3.5]{JinBook}, which says  in particular that $B_j(t)\ge 0$ and $B_j'(t)\le 0$.  (In the case $\underline q_j=0$ one takes $B_j\equiv 0$.)
Consequently $(w-B_j)(0)\ge 0$ and
\[
\left(D_t^{\bar\alpha}+\lambda\right)(w-B_j)(t) = v(t) - \left[q_j(t)-\underline q_j \right]D _t ^{\alpha_j} B_j(t)
	-\sum_{i\ne j} q_i D _t ^{\alpha_i} B_j(t)  \ge 0\ \text{ for } t>0.
\]
Lemma~\ref{lem:v0w0}  now yields $w(t)\ge B_j(t)$ for all $t\in[0,T]$, which implies the desired result since  $j\in \{1,\dots,\ell\}$ was arbitrary.
\end{proof}

In the case $\ell=1$, constant $q_1>0$,  and $v\equiv 0$, the bound of the theorem is sharp.

\tcb{In the conclusion \eqref{wtEa} of Theorem~\ref{thm:wlowerbound}, the value of~$j$ such that $E_{\alpha_j,1}(-\lambda t^{\alpha_j}/\underline q_j)$ is the dominant term may change as $t$ varies. This phenomenon is illustrated in Figure~\ref{fig:Egraphs}, where $\ell=3$, $w_0=\lambda = \underline q_j =1$ for each~$j$, and $\alpha_j \in \{1, 0.7, 0.3\}$;  one sees that Theorem~\ref{thm:wlowerbound} yields $w(t)\ge E_{1,1}(-t)$ for $0\le t < 0.7$ (approx.) but $w(t)\ge E_{0.3,1}(-t^{0.3})$ for $0.7< t\le 2$.}

  \begin{figure}[h!]
\begin{center}
\hspace*{-0.3cm}\includegraphics[height=0.55\textwidth]{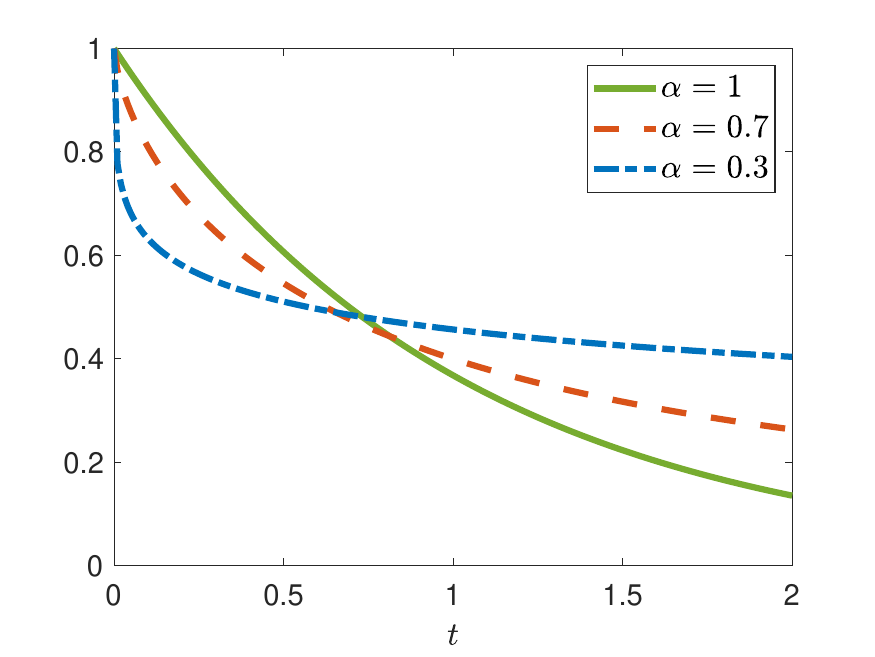}\hspace*{-0.0cm}
\end{center}
 \caption{\label{fig:Egraphs}\it\small
\tcb{Illustration of result of Theorem~\ref{thm:wlowerbound}: graphs of $E_{\alpha,1}(-t^{\alpha})$ for $\alpha =1, 0.7, 0.3$}}
 \end{figure}

In the next remark we discuss the behaviour of $w'(t)$ as $t\to 0^+$.

\begin{remark}\label{rem:w'(t)}
Assume the hypotheses of Theorem~\ref{thm:wlowerbound} and that $v\in C^1[0,T]$.
Assume also that $q_1(0)\ne 0$ and $q_1\in C^1[0,T]$; without loss of generality we can take $q_1(0)=1$.
Set $\phi(t):=t^{\alpha_1}/\Gamma(1+\alpha_1)$. Then
\[
\left(D_t^{\bar\alpha}+\lambda\right)\phi(t) = q_1(t) +\sum_{i=2}^\ell \frac{q_i(t) t^{\alpha_1-\alpha_i}}{\Gamma(1+\alpha_1-\alpha_i)}
		+ \frac{\lambda t^{\alpha_1}}{\Gamma(1+\alpha_1)}
		=1+O(t^{\alpha_1-\alpha_2}),
\]
so $\left(D_t^{\bar\alpha}+\lambda\right)[1-\lambda\phi(t)]=O(t^{\alpha_1-\alpha_2})$.

Let $\delta$ be a nonzero constant. Set $\widetilde w_{\delta}(t):=w_0\,[1-\lambda\phi(t)]+[v(0)+\delta]\,\phi(t)$. Then $\widetilde w_{\delta}(0) = w_0$ and
\[
(D_t^{\bar\alpha}+\lambda)\,\widetilde w_{\delta}(t) = O(t^{\alpha_1-\alpha_2}) + [v(0)+\delta]\left[ 1+ O(t^{\alpha_1-\alpha_2})\right]
	= v(t)+\delta+O(t^{\alpha_1-\alpha_2})
\]
since $v(t)=v(0)+O(t)$. Now choose $\delta$ to be a small positive constant. Then choose $\epsilon>0$ such that for $t\in(0,\epsilon)$ one has $ \vert O(t^{\alpha_1-\alpha_2}) \vert \le \delta$ in the previous equation. Now the comparison principle (Lemma~\ref{lem:v0w0}) yields $\widetilde w_{-\delta}(t)\le w(t)\le \widetilde w_{\delta} (t)$ for $0\le t\le\epsilon$.
That is,
\[
[v(0)-\lambda w_0-\delta]\,\phi(t)\le w(t)-w_0\le[v(0)-\lambda w_0+\delta]\,\phi(t) \ \text{ for } 0\le t\le \epsilon.
\]
Hence $(w(t)-w(0))/t\approx [v(0)-\lambda w_0]\phi(t)$ as $t\to 0$, and assuming that $v(0)\neq\lambda w_0$, one has
 $(w(t)-w(0))/t = O(t^{\alpha_1-1})$ as $t\to 0$.

 Thus, when $\alpha_1<1$  we expect that $w'(t)\to -\infty$ as $t\to 0^+$, but if $\alpha_1=1$, then the behaviour of the solution is quite different: we expect that $w'(t)$ remains bounded as $t\to 0^+$. This behaviour when $\alpha_1=1$  concurs with the existence result for~\eqref{IVP} that we shall prove rigorously in Lemma~\ref{lem:special}.
\end{remark}

\noindent{\it Notation.}
From Lemma~\ref{lem:v0w0} it follows that any solution of \eqref{IVP} is unique.
We shall use the  notation
$\left(D_t^{\bar\alpha}+\lambda\right)^{-1}\!v$
for this unique solution.
\smallskip

When the $q_i$ are positive constants, then the solution of \eqref{IVP} exists and can be written in an explicit
form; this will be seen in  Section~\ref{ssec_const}.
For the general case of variable $q_i(t)$, existence of the solution to~\eqref{IVP} seems reasonable but it does remain an open question; nevertheless, almost all of our analysis does not require this existence result --- the only exception is Corollary~\ref{cor1_L2}.

\subsection{Solution of \eqref{IVP} for constant-coefficient $D_t^{\bar\alpha}$}\label{ssec_const}
Throughout Section~\ref{ssec_const}, let all $q_i$ in \eqref{DD_t_def} be positive constants.
Without loss of generality, we assume that $q_1=1$.

Then the  structure of the solution of \eqref{IVP} is intimately related to the following multinomial Mittag-Leffer function, which is a generalisation of the two-parameter Mittag-Leffler function $E_{\alpha,\beta}(s)$.

\begin{definition}\label{def:multiML}\cite{LLY15,LG99}
Let $\beta_0 \in (0,2)$. For $j=1,\dots,\ell$,  let $0<\beta_j\le 1, \ s_j \in \mathbb{R}$ and  $k_j\in \mathbb{N}_0$. Then the multinomial Mittag-Leffler function  is defined by
\begin{equation}\label{ML2}
E_{(\beta_1,\dots,\beta_\ell),\beta_0}(s_1,\dots,s_\ell):=\sum_{k=0}^{\infty}\sum_{k_1+\dots+k_\ell \atop =k}^{}\frac{(k;k_1,\dots,k_\ell)\prod_{j=1}^{\ell}s_j^{k_j}}{\Gamma(\beta_0+\sum_{j=1}^{\ell}\beta_jk_j)}\,,
\end{equation}
where  the multinomial coefficient
\begin{equation*}
(k;k_1,\dots,k_\ell):= \frac{k!}{k_1!\cdots k_\ell!} \ \text{ with }   k=\sum_{j=1}^{\ell}k_j.
\end{equation*}
\end{definition}


\begin{remark}\label{rem:Esymm}
The symmetry of Definition~\ref{def:multiML} implies that the value of $E_{(\beta_1,\dots,\beta_\ell),\beta_0}(s_1,\dots,s_\ell)$ remains unaltered if we perform any permutation of $(\beta_1,\dots,\beta_\ell)$, provided that we also perform the same permutation of $(s_1,\dots,s_\ell)$. In particular one has
$E_{(\beta_1,\dots,\beta_\ell),\beta_0}(s_1,\dots,s_\ell) = E_{(\beta_\ell,\dots,\beta_1),\beta_0}(s_\ell,\dots,s_1)$.
\end{remark}

We shall also use the more succinct notation of~\cite[eq.(2.4))]{Baz21}:
\beq\label{Baz}
\FF_{(\mu_1, \mu_2,\dots,\mu_m),\beta}(t; a_1, a_2, \dots, a_m) :=
t^{\beta-1}E_{(\mu_1, \mu_2,\dots,\mu_m),\beta}(-a_1t^{\mu_1},-a_2t^{\mu_2},\dots,-a_mt^{\mu_m})
\eeq
for $t>0$, any positive integer~$m$,  $\beta \in (-\infty,2)$,  $0 <\mu_j <1$ for each~$j$, and any real constants $a_1,\dots,a_m$.

\begin{lemma}\label{lem:beta<1}
Suppose that $0\le \mu_m <\dots < \mu_1 \le \beta \le 1$ and $a_j>0$ for $j=1,\dots, m$. Then
$E_{(\mu_1, \mu_2,\dots,\mu_m),\beta}(-a_1t^{\mu_1},-a_2t^{\mu_2},\dots,-a_mt^{\mu_m})\ge 0 $ for all $t> 0$.
\end{lemma}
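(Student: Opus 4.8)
The plan is to prove nonnegativity of the multinomial Mittag-Leffler function $E_{(\mu_1,\dots,\mu_m),\beta}(-a_1t^{\mu_1},\dots,-a_mt^{\mu_m})$ by identifying $\FF_{(\mu_1,\dots,\mu_m),\beta}(t;a_1,\dots,a_m)$ with the solution of an appropriate multiterm fractional initial-value problem and then invoking the comparison principle of Lemma~\ref{lem:v0w0}. The key fact I would use is that these functions are the building blocks of the solution formula for constant-coefficient problems of the form \eqref{IVP}: specifically, for suitable data the function $t\mapsto \FF_{(\mu_1,\dots,\mu_m),\beta}(t;a_1,\dots,a_m)$ satisfies an identity of the type $\sum_j a_j D_t^{\mu_j} u + \cdots = 0$ (or a related nonhomogeneous version), which can be read off from term-by-term differentiation of the series \eqref{ML2} using $D_t^{\mu}t^{\gamma-1} = \Gamma(\gamma)t^{\gamma-\mu-1}/\Gamma(\gamma-\mu)$. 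This is the standard Luchko-type computation showing that the multinomial Mittag-Leffler function is the Green's function / fundamental solution for the multiterm Caputo operator.

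The main technical step is choosing the right initial-value problem and verifying the regularity hypothesis \eqref{ycondition}. I would first handle the case $\beta = \mu_1$ (or more generally relate a general $\beta$ to this case via an integration/fractional-integration argument, since changing $\beta$ corresponds to applying $J_t$), because when $\beta$ equals the largest order the function $\FF$ naturally solves a clean homogeneous problem of the form $D_t^{\bar\alpha}w + \lambda w = 0$ with $w(0)$ a nonnegative constant, after a rescaling $t \mapsto ct$ to normalise the leading coefficient to $1$. One must check that $\FF(t;\dots)$ lies in $C(0,T]$, has the right limit as $t\to 0^+$, and is locally Lipschitz (equivalently in $W^{1,\infty}(\epsilon,t)$) on compact subsets of $(0,T]$ — this follows from the absolute and locally uniform convergence of the defining series and of its termwise derivative, since each term $t^{\beta-1+\sum\mu_jk_j}$ is smooth on $(0,\infty)$ and the series converges entirely (the multinomial Mittag-Leffler function is entire in its arguments). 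Once \eqref{ycondition} holds and the initial value is nonnegative and the right-hand side is nonnegative (here zero), Lemma~\ref{lem:v0w0} gives $\FF \ge 0$, hence $E_{(\mu_1,\dots,\mu_m),\beta}(\dots) \ge 0$ by \eqref{Baz}.

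The reduction from general $\beta \le 1$ to $\beta = \mu_1$ is where I expect the real obstacle to lie: one needs the identity expressing $\FF_{(\mu_1,\dots,\mu_m),\beta}$ in terms of $\FF_{(\mu_1,\dots,\mu_m),\mu_1}$ (or directly in terms of solutions of \eqref{IVP} with a nonnegative forcing term), and care is needed because for $\beta < \mu_1$ the function $\FF$ behaves like $t^{\beta-1}$ near $t=0$ and so is \emph{not} continuous at the origin — this is precisely why Corollary~\ref{cor:wnotcont} (which replaces $w\in C[0,T]$ by the assumption that $\lim_{t\to 0^+}w(t)$ exists, possibly after the substitution absorbing the singular factor) is the version of the comparison principle one actually applies. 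A clean route is: write $E_{(\mu_1,\dots,\mu_m),\beta}(-a_1t^{\mu_1},\dots) = \Gamma(\beta)\,{}_{}D_t^{\,?}$-type relations, or more concretely observe that $t^{\beta-1}/\Gamma(\beta)$ plus a fractional integral of $\FF_{(\dots),\mu_1}$-type terms reproduces $\FF_{(\dots),\beta}$; since fractional integration $J_t^\gamma$ preserves nonnegativity, it suffices to establish the claim at $\beta=\mu_1$ and then propagate. I would therefore structure the proof as: (i) prove the base case $\beta=\mu_1$ via rescaling $+$ Lemma~\ref{lem:v0w0}; (ii) derive the series/fractional-integral identity linking general $\beta\in(\mu_1,1]$ — and the separate range below — to the base case; (iii) conclude by positivity-preservation of $J_t^\gamma$. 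The bookkeeping in step (ii), including the boundary sub-cases $\mu_m = 0$ and $\beta = 1$ (where $D_t^1 = \pt_t$ and one invokes the exceptional-case handling already noted in the proof of Lemma~\ref{lem:v0w0}), will be the fiddliest part.
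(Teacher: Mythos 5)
The paper's own proof is essentially a citation: taking $\delta=1$ in Bazhlekova's complete-monotonicity theorem \cite[Theorem 3.2]{Baz21} shows that $t\mapsto\FF_{(\mu_1,\dots,\mu_m),\beta}(t;a_1,\dots,a_m)$ is completely monotone, hence nonnegative, and the claim follows from \eqref{Baz} since $t^{\beta-1}>0$. Your plan of rederiving positivity from the comparison principle is a genuinely different route, but as written it fails at its foundation.

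The gap is your base case. You assert that for $\beta=\mu_1$ the function $\FF_{(\mu_1,\dots,\mu_m),\mu_1}(t;a_1,\dots,a_m)$ ``naturally solves a clean homogeneous problem of the form $D_t^{\bar\alpha}w+\lambda w=0$ with $w(0)$ a nonnegative constant.'' It does not: that function is the convolution kernel (Green's function) appearing in \eqref{w1}, and near $t=0$ it behaves like $t^{\mu_1-1}/\Gamma(\mu_1)$, which blows up whenever $\mu_1<1$. So it has no finite initial value, and neither Lemma~\ref{lem:v0w0} nor Corollary~\ref{cor:wnotcont} (which still requires $\lim_{t\to0^+}w(t)$ to exist) applies to it. The function that actually solves the homogeneous IVP with $y(0)=1$ is $1-\lambda t^{\alpha_1}E_{(\dots),1+\alpha_1}(\dots)$ (see Lemma~\ref{lem:IVPhomog}), whose nonnegativity involves second parameter in $(1,1+\alpha_1)$ --- outside the range of the present lemma, and handled in the paper only via the nontrivial contour-integral result Lemma~\ref{lem:beta>1}. (Your worry about the range $\beta<\mu_1$ is moot, since the hypothesis is $\mu_1\le\beta$; but the $t^{\beta-1}$ blow-up you attribute to that range already occurs at your base case $\beta=\mu_1<1$.) A repairable version of your idea does exist: for every continuous $v\ge0$, the solution $w=(D_t^{\bar\alpha}+\lambda)^{-1}v$ is nonnegative by Lemma~\ref{lem:v0w0} and equals $\int_0^t\FF_{(\dots),\mu_1}(s;\dots)\,v(t-s)\,ds$ by \eqref{w1}; if $\FF$ were negative on a neighbourhood of some $s_0>0$, a nonnegative $v$ supported in a short interval would force $w<0$ somewhere, a contradiction; your step (iii) (termwise $J_t^\gamma$ raises $\beta$ and preserves positivity) then correctly propagates the result to $\mu_1\le\beta\le1$. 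But that localization argument, together with the verification that the convolution solution satisfies \eqref{ycondition} so that Lemma~\ref{lem:v0w0} is applicable to it, is absent from your write-up, and without it the proof does not close.
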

\begin{proof}
Taking $\delta=1$ in \cite[Theorem 3.2]{Baz21} shows that  $t\mapsto \FF_{(\mu_1, \mu_2,\dots,\mu_m),\beta}(t; a_1, a_2, \dots, a_m)$ is a completely monotone function, which implies that $\FF_{(\mu_1, \mu_2,\dots,\mu_m),\beta}(t; a_1, a_2, \dots, a_m) \ge 0$. The desired result now follows from~\eqref{Baz}.
\end{proof}

If $w_0=0$, then \cite[Theorem 4.1]{LG99}  gives the solution of \eqref{IVP} as
\begin{align}
&w(t)=\left(D_t^{\bar\alpha}+\lambda\right)^{-1}v \notag\\
&= \int_{s=0}^t  s^{\alpha_1-1}E_{(\alpha_1-\alpha_2,\alpha_1-\alpha_3,\dots,\alpha_1-\alpha_\ell,\alpha_1),\alpha_1}
	(-q_2s^{\alpha_1-\alpha_2},-q_3s^{\alpha_1-\alpha_3},\dots,-q_\ell s^{\alpha_1-\alpha_\ell},-\lambda s^{\alpha_1}) \notag\\
	&\hspace{10cm} v(t-s)\,ds \notag\\
	&= \int_{s=0}^t  s^{\alpha_1-1}E_{(\alpha_1,\alpha_1-\alpha_\ell, \dots,\alpha_1-\alpha_3,\alpha_1-\alpha_2),\alpha_1}	(-\lambda s^{\alpha_1},-q_\ell s^{\alpha_1-\alpha_\ell}, \dots, -q_3s^{\alpha_1-\alpha_3},-q_2s^{\alpha_1-\alpha_2}) \notag\\
	&\hspace{10cm} v(t-s)\,ds \notag\\
	 &= \int_{s=0}^t \FF_{(\alpha_1,\alpha_1-\alpha_\ell, \dots,\alpha_1-\alpha_3,\alpha_1-\alpha_2),\alpha_1}
			(s; \lambda, q_\ell, \dots, q_3, q_2) \,v(t-s)\,ds, 	 \label{w1}
\end{align}
where we used  Remark~\ref{rem:Esymm} and \eqref{Baz}. The formula \eqref{w1} is the multiterm generalisation of \cite[eq.(2.1)]{KopAML22}.

\begin{remark}\label{rem:IVPconstsoln}
It is easy to see that $\left(D_t^{\bar\alpha}+\lambda\right)[1]=\lambda$. Hence if  $w_0\ne 0$, then the initial-value problem ~\eqref{IVP}, with positive constant coefficients $q_i$, has the unique solution
\[
w(t) = w_0 + \left(D_t^{\bar\alpha}+\lambda\right)^{-1}\left[v(t) -\lambda w_0\right]\quad\text{for }t\in[0,T].
\]
Furthermore, an explicit solution representation for $ \left(D_t^{\bar\alpha}+\lambda\right)^{-1}[v-\lambda w_0]$ is provided by~\eqref{w1} with $v(t-s)$ replaced by $v(t-s)-\lambda w_0$.
\end{remark}

\subsection{The special case $\alpha_1=1$ and $q_1(t)>0$}\label{sec:special}
In this subsection we consider the special case where $\alpha_1=1$ and $q_1(t)>0$ for all $t\in[0,T]$. In this setting we are able to prove existence of a solution $w$ to the variable-coefficient initial-value problem~\eqref{IVP} , and moreover this solution lies in $C^1[0,T]$.

\begin{lemma}\label{lem:special}
Assume that $\alpha_1=1$ and $q_1(t)>0$ for all $t\in[0,T]$, with $v,q_i\in C[0,T]$ for all~$i$. Then  the initial-value problem \eqref{IVP} has a solution $w\in C^1[0,T]$, and this solution is unique.
\end{lemma}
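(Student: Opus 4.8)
The plan is to treat \eqref{IVP} with $\alpha_1=1$ as a perturbation of the first-order ODE obtained by isolating the $w'$ term, and to construct the solution by a fixed-point/Volterra-integral-equation argument. First I would divide the equation through by $q_1(t)>0$ (legitimate since $q_1$ is continuous and strictly positive on the compact interval $[0,T]$, hence bounded below by a positive constant), rewriting \eqref{IVP} as
\beq\label{rewrite-eq}
w'(t) + \frac{\lambda}{q_1(t)}\,w(t) = \frac{v(t)}{q_1(t)} - \sum_{i=2}^{\ell}\frac{q_i(t)}{q_1(t)}\,D_t^{\alpha_i}w(t),\qquad w(0)=w_0.
\eeq
Using the integrating factor $\rho(t):=\exp\!\bigl(\int_0^t \lambda/q_1(r)\,dr\bigr)$ and then expressing each $D_t^{\alpha_i}w = J_t^{1-\alpha_i}(w')$ via \eqref{CaputoEquiv}, one converts \eqref{rewrite-eq} into a linear Volterra integral equation of the second kind for the unknown $z:=w'$, of the schematic form $z = F + \mathcal K z$, where $F\in C[0,T]$ is built from $v$, $w_0$, $\lambda$ and $q_1$, and $\mathcal K$ is an integral operator whose kernel involves the weakly singular factors $(t-s)^{-\alpha_i}$ together with the continuous coefficients $q_i/q_1$ and (after an interchange of the order of integration in the nested integrals coming from $\rho$ and $J_t^{1-\alpha_i}$) a bounded continuous factor.

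Second I would establish that $\mathcal K$ maps $C[0,T]$ into itself and is a contraction on $C[0,T']$ for $T'$ small, or more cleanly that some power $\mathcal K^n$ is a contraction on all of $C[0,T]$: because each kernel is only weakly singular of order $\alpha_i<1$ and the coefficients are bounded, the iterated kernels gain regularity and one gets the standard estimate $\|\mathcal K^n\|_{C[0,T]\to C[0,T]}\to 0$ (the familiar Gronwall/Mittag-Leffler-type bound for Abel-type Volterra operators). Hence $z=w'$ exists and is unique in $C[0,T]$, and then $w(t):=w_0+\int_0^t z(s)\,ds\in C^1[0,T]$ solves \eqref{IVP}; that $w$ genuinely satisfies the original equation, not just the integrated form, follows by differentiating, using that all terms are continuous. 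Uniqueness of the solution within the class considered is already guaranteed by Lemma~\ref{lem:v0w0} (applied, after the shift $\hat w = e^{-\mu t}w$ indicated there, to the difference of two solutions and to its negative), so I would simply cite that.

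Alternatively — and perhaps more in the spirit of the paper — I would avoid building the Volterra machinery from scratch and instead reduce to the constant-coefficient case already treated in Section~\ref{ssec_const}: freeze the coefficients at $t=0$ to obtain a solvable reference problem with explicit solution given by \eqref{w1} and Remark~\ref{rem:IVPconstsoln}, write the variable-coefficient problem as that reference problem plus a remainder that is again a weakly singular Volterra perturbation with small norm on short intervals, solve locally, and continue the solution step by step across $[0,T]$ using the a priori bound from Theorem~\ref{thm:wlowerbound} (and a companion upper bound obtained the same way) to prevent blow-up; the continuation works because the length of each step can be chosen uniformly, depending only on $\|q_i\|_{C[0,T]}$, $\min q_1$, $\lambda$, and the $\alpha_i$. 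The main obstacle in either route is the same: controlling the nested singular integrals — specifically, showing that after applying the integrating factor the composition of $J_t^{1-\alpha_i}$ with multiplication by the bounded-variation-free but merely continuous coefficient $q_i/q_1$ still yields an operator on $C[0,T]$ whose iterates contract, and checking that the constructed $w$ has $w'\in C[0,T]$ up to and including $t=0$ (this is exactly the point where $\alpha_1=1$ is essential, matching the heuristic of Remark~\ref{rem:w'(t)} that $w'$ stays bounded only in the $\alpha_1=1$ case).
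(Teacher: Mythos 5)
Your primary route is essentially the paper's own proof: after normalising $q_1\equiv 1$, the paper likewise sets $\phi:=w'$, uses \eqref{CaputoEquiv} to rewrite \eqref{IVP} as a linear Volterra integral equation of the second kind for $\phi$ with weakly singular kernels $q_i(t)(t-s)^{-\alpha_i}$ plus the regular term $\lambda\int_0^t\phi$, solves that equation in $C[0,T]$, and recovers $w:=w_0+\int_0^t\phi(s)\,ds\in C^1[0,T]$; uniqueness is drawn from Lemma~\ref{lem:v0w0}, exactly as you do. The only genuine divergence is the solvability mechanism for the Volterra equation: you invoke the classical iterated-kernel/Neumann-series bound showing $\|\mathcal K^n\|\to 0$, whereas the paper observes that each weakly singular Volterra operator is compact on $(C[0,T],\|\cdot\|_\infty)$ and applies the Fredholm alternative together with the already-established uniqueness. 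Both are standard and correct; your version is constructive and needs no separate uniqueness input, while the paper's avoids estimating iterated kernels at the cost of quoting compactness and Fredholm theory. Two small simplifications to your write-up: the integrating factor is superfluous (keeping $\lambda\int_0^t\phi$ inside the Volterra operator works directly and avoids the nested-integral bookkeeping you flag as the ``main obstacle''), and no differentiation is needed at the end, since the integral equation for $\phi$ is already equivalent to \eqref{IVP} once $w=w_0+\int_0^t\phi$ and $D_t^{\alpha_i}w=J_t^{1-\alpha_i}\phi$ are substituted back; likewise $w'=\phi\in C[0,T]$ is automatic, so continuity of $w'$ at $t=0$ requires no extra check. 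Your second (freeze-and-continue) route is workable but considerably more elaborate than necessary.
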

\begin{proof}
Lemma~\ref{lem:v0w0} implies that any solution of \eqref{IVP} is unique.
To show existence of a solution we assume without loss of generality that $q_1(t)\equiv 1$ for $t\in[0,T]$, since one can divide \eqref{IVP} by~$q_1(t)$. Set $\phi(t) = w'(t)$. Using the definition~\eqref{CaputoEquiv},  write \eqref{IVP} as
\begin{equation}\label{phi1}
\phi(t) + \sum_{i=2}^\ell \int_{0}^t(t-s)^{-\alpha_i}q_i(t)   \phi(s)\, ds + \lambda \int_0^t \phi(s)\,ds = v(t) - \lambda w_0 \ \text{ for }t\in [0,T].
\end{equation}
This is a Volterra integral equation of the second kind  in the unknown function~$\phi$.
Observe first that any solution of~\eqref{phi1} in $C[0,T]$ must be unique, because two distinct solutions $\phi_1, \phi_2$ would yield two distinct solutions  $w_i(t) := w_0 + \int_0^t\phi_i(s)\,ds$ ($i=1,2$) of~\eqref{IVP}.
It is well known (see, e.g., \cite[Appendix A.2.2]{Bru17}) that each of the operators
\[
\phi(t) \mapsto  \int_{0}^t(t-s)^{-\alpha_i}q_i(t)   \phi(s)\, ds \quad \text{ and } \quad \phi(t)\mapsto  \int_0^t \phi(s)\,ds
\]
is a compact operator from the Banach space $(C[0,T], \|\cdot\|_\infty)$ to itself, and a finite sum of compact operators is also a compact operator, and  we saw already that any solution of~\eqref{phi1}  is unique;
 thus we can apply the Fredholm Alternative Theorem \cite[Theorem A.2.17]{Bru17} to conclude that~\eqref{phi1} has a solution $\phi\in C[0,T]$. Hence \eqref{IVP} has the solution $w(t) := w_0 + \int_0^t\phi(s)\,ds$, and this solution clearly lies in $C^1[0,T]$.
\end{proof}

\section{$L_2(\Omega)$ a posteriori error estimates}\label{sec:L2apost}
Let $u_h$ be our  \tcb{approximate solution.} We assume throughout our analysis that $u_h(\cdot,0)=u(\cdot,0)$ on~$\Omega$ and  $u_h(x,t)=u(x,t)$ for $x\in\pt\Omega$ and $t>0$. For the case $u_h(\cdot,0)\ne u(\cdot,0)$, see \cite[Corollary 2.5]{KopAML22}.

\begin{lemma}\label{lem_aux}
Suppose that $r(\cdot,0)=0$ and
$r\in L_{\infty}(0,T;\,L_2(\Omega)) \cap W^{1,\infty}(\epsilon,T;\,L_2(\Omega))$ for each $\epsilon\in (0,T]$,
Then
\[
\left\langle  \sum_{i=1}^{\ell} q_i(t)\, D _t ^{\alpha_i}r(\cdot,t),\,r(\cdot,t) \right\rangle \ge \left(\sum_{i=1}^{\ell} q_i(t)\, D _t ^{\alpha_i}\|r(\cdot,t)\|\right) \|r(\cdot,t)\|\qquad\mbox{for}\;\;t>0.
\]
\end{lemma}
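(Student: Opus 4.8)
The plan is to prove the pointwise-in-$t$ inequality term by term, i.e.\ to show that for each fixed $i$ and each $t>0$ one has
\[
\langle D_t^{\alpha_i}r(\cdot,t),\,r(\cdot,t)\rangle \ge \bigl(D_t^{\alpha_i}\|r(\cdot,t)\|\bigr)\,\|r(\cdot,t)\|,
\]
and then multiply by $q_i(t)\ge 0$ and sum over $i$ to obtain the stated result. Since the hypotheses on $r$ (namely $r(\cdot,0)=0$, $r\in L_\infty(0,T;L_2(\Omega))$ and $r\in W^{1,\infty}(\epsilon,T;L_2(\Omega))$ for each $\epsilon\in(0,T]$) are exactly of the type discussed in Section~\ref{sec:fnregular} (cf.\ \eqref{ycondition} and Remark~\ref{rem:limt0}), the reformulated Caputo derivative \eqref{altCaputo} is available for each $D_t^{\alpha_i}$ with $\alpha_i<1$; for the case $\alpha_1=1$ the corresponding inequality $\langle \pt_t r,\,r\rangle \ge \|r\|\,\pt_t\|r\|$ follows from $\pt_t\|r\|^2 = 2\langle\pt_t r,r\rangle$ and $\pt_t\|r\|^2 = 2\|r\|\,\pt_t\|r\|$, so I treat that case separately and concentrate on $\alpha_i<1$.

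For $\alpha_i<1$, using \eqref{altCaputo} for the scalar-valued function $t\mapsto \|r(\cdot,t)\|$ and the vector-valued function $t\mapsto r(\cdot,t)$, and recalling $r(\cdot,0)=0$, the claimed inequality reduces to showing that the integrand and the boundary term behave correctly. After multiplying by $\Gamma(1-\alpha_i)$ it suffices to establish the two elementary facts: first, $\langle r(\cdot,t)-r(\cdot,s),\,r(\cdot,t)\rangle \ge \bigl(\|r(\cdot,t)\|-\|r(\cdot,s)\|\bigr)\|r(\cdot,t)\|$ for all $0\le s\le t$, which is immediate from Cauchy--Schwarz, $\langle r(\cdot,s),r(\cdot,t)\rangle \le \|r(\cdot,s)\|\,\|r(\cdot,t)\|$; and second, the same bound with $s=0$ gives $\langle r(\cdot,t),r(\cdot,t)\rangle = \|r(\cdot,t)\|^2 \ge \|r(\cdot,t)\|^2$ trivially (this is the $t^{-\alpha_i}[\,\cdot\,]$ term, using $r(\cdot,0)=0$). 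Multiplying the first inequality by the nonnegative kernel $\alpha_i(t-s)^{-\alpha_i-1}$ and integrating over $s\in(0,t)$, then adding the boundary term, yields exactly $\langle D_t^{\alpha_i}r(\cdot,t),r(\cdot,t)\rangle \ge (D_t^{\alpha_i}\|r(\cdot,t)\|)\|r(\cdot,t)\|$.

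The main point requiring care — the ``obstacle'' — is not the algebra but the justification that \eqref{altCaputo} genuinely applies to both $r(\cdot,t)$ and $\|r(\cdot,t)\|$ under the stated regularity, and that the integrals and inner products may be manipulated (interchanged) as above. For $r$ itself this is the content of the discussion following \eqref{ycondition}, applied componentwise in the sense of the Bochner/$L_2(\Omega)$-valued integral: the regularity $r\in L_\infty(0,T;L_2(\Omega))\cap W^{1,\infty}(\epsilon,T;L_2(\Omega))$ plays the role of \eqref{ycondition} with values in $L_2(\Omega)$, and $r(\cdot,0)=0$ together with $r\in L_\infty(0,T;L_2(\Omega))$ lets us invoke Remark~\ref{rem:limt0} to handle the $s$ near $0$ part of the integral (the integrand there is bounded). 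For $g(t):=\|r(\cdot,t)\|$ one checks that $g$ inherits the same regularity: $g\in L_\infty(0,T)$ since $r\in L_\infty(0,T;L_2(\Omega))$, $g$ is Lipschitz on each $[\epsilon,T]$ because $\bigl|\,\|r(\cdot,t)\|-\|r(\cdot,s)\|\,\bigr|\le \|r(\cdot,t)-r(\cdot,s)\|$ and $r$ is Lipschitz $[\epsilon,T]\to L_2(\Omega)$, and $g(0^+)=\|r(\cdot,0)\|=0$ exists; hence \eqref{altCaputo} defines $D_t^{\alpha_i}g(t)$. The interchange of $\langle\cdot,r(\cdot,t)\rangle$ with $\int_0^t$ is justified by absolute convergence of the integrand, which follows from the two-piece argument (continuity on $[0,t/2]$, linear-in-$(t-s)$ bound on $[t/2,t]$) given in the paragraph containing \eqref{ycondition}, now read with values in $L_2(\Omega)$. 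Once these measurability/integrability points are in place, the inequality is purely the term-by-term Cauchy--Schwarz estimate described above, and multiplying by $q_i(t)\ge0$ and summing completes the proof.
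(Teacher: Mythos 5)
Your proof is correct and is essentially the argument the paper intends: the paper's own ``proof'' merely cites the single-term case (Lemma~2.8 of \cite{KopAML22}) together with the reformulation \eqref{altCaputo} and Remark~\ref{rem:limt0}, and your term-by-term Cauchy--Schwarz estimate on the kernel representation, summed with the weights $q_i(t)\ge 0$ from \eqref{q_i}, is exactly that argument carried out explicitly for the multiterm operator. The regularity discussion (transferring \eqref{ycondition}-type hypotheses to $t\mapsto\|r(\cdot,t)\|$ and handling $\alpha_1=1$ separately) is the right supporting detail and matches the paper's framework.
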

\begin{proof}
One can use the same proof as for \cite[Lemma 2.8]{KopAML22}, based on the reformulation \eqref{altCaputo} and recalling Remark~\ref{rem:limt0}.
\end{proof}

Define the residual
\[
R_h(x,t) := \left(\sum_{i=1}^{\ell} q_i(t)\, D _t ^{\alpha_i} + \LL\right) u_h(x,t)-f(x,t)
	\ \text{ for all } (x,t)\in\Omega\times (0,T].
\]

 \begin{theorem}\label{the_L2}
In \eqref{pde} assume that  $\langle \LL r,r\rangle\ge \lambda\|r\|^2$ for all $r\in H_0^1(\Omega)$, where $\lambda \ge 0$ is some constant. Suppose that a unique solution $u$ of \eqref{problem} and its approximation $u_h$ are
in $C([0,T];\,L_2(\Omega)) \cap W^{1,\infty}(\epsilon,T;\,L_2(\Omega))$ for each $\epsilon\in (0,T]$,
and also in $H^1_0(\Omega)$ for each $t>0$.
Suppose also that
\beq\label{res_bound}
\|R_h(\cdot,t)\|\le \left( \sum_{i=1}^{\ell} q_i(t) D _t ^{\alpha_i} + \lambda\right) \EE(t)\quad \forall\,t>0
\eeq
for some barrier function $\EE$ that satisfies the regularity condition \eqref{ycondition}, with moreover $\EE(t)\ge 0\ \forall\,t\ge0$.
Then $\|(u_h-u)(\cdot,t)\|\le \EE(t)$ $\forall\,t\ge0$.%
\end{theorem}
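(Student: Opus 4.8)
The plan is to let $r := u_h - u$ be the error and $e(t) := \|r(\cdot,t)\|$, and to compare $e$ against the barrier $\EE$ using the comparison principle of Lemma~\ref{lem:v0w0} (or rather Corollary~\ref{cor:wnotcont} if regularity at $t=0$ is delicate). First I would observe that $r(\cdot,0) = u_h(\cdot,0) - u(\cdot,0) = 0$ by the standing assumption on $u_h$, so $e(0) = 0$, and that the residual satisfies $(\sum_i q_i D_t^{\alpha_i} + \LL) r(\cdot,t) = R_h(\cdot,t)$ for $t > 0$, since $\LL u + D_t^{\bar\alpha} u = f$ exactly. Taking the $L_2(\Omega)$ inner product of this identity with $r(\cdot,t)$ gives
\[
\left\langle \sum_i q_i(t) D_t^{\alpha_i} r(\cdot,t),\, r(\cdot,t) \right\rangle + \langle \LL r, r \rangle = \langle R_h(\cdot,t), r(\cdot,t) \rangle.
\]

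Next I would bound each term. For the first term on the left, Lemma~\ref{lem_aux} applies (its regularity hypothesis on $r$ follows from the assumed regularity of $u$ and $u_h$, and $r(\cdot,0)=0$), giving a lower bound of $\bigl(\sum_i q_i(t) D_t^{\alpha_i} e(t)\bigr) e(t)$. For the second term, the coercivity assumption gives $\langle \LL r, r\rangle \ge \lambda e(t)^2$. For the right-hand side, Cauchy--Schwarz gives $\langle R_h, r\rangle \le \|R_h(\cdot,t)\| \, e(t)$. Combining these and dividing through by $e(t)$ (at points where $e(t) > 0$; one handles $e(t) = 0$ separately or by a limiting argument) yields
\[
\left(\sum_i q_i(t) D_t^{\alpha_i} + \lambda\right) e(t) \le \|R_h(\cdot,t)\| \le \left(\sum_i q_i(t) D_t^{\alpha_i} + \lambda\right) \EE(t),
\]
where the last inequality is the hypothesis~\eqref{res_bound}. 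Hence the difference $w := \EE - e$ satisfies $D_t^{\bar\alpha} w + \lambda w \ge 0$ with $w(0) = \EE(0) \ge 0$, so by Lemma~\ref{lem:v0w0} (applied with $v \equiv 0 \le D_t^{\bar\alpha} w + \lambda w$, i.e. viewing $w$ as a supersolution) we get $w(t) \ge 0$, that is, $e(t) \le \EE(t)$ for all $t$.

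I expect the main obstacle to be the technical handling of division by $e(t)$ and the regularity of $e(t) = \|r(\cdot,t)\|$ near $t = 0$ and at any zeros of $e$. One must check that $e$ inherits the regularity~\eqref{ycondition} needed for $D_t^{\alpha_i} e$ to be defined via~\eqref{altCaputo}; the norm $\|r(\cdot,t)\|$ is Lipschitz on $[\epsilon, T]$ whenever $r$ is, by the reverse triangle inequality, and $e \in C[0,T]$ follows from $r \in C([0,T];L_2(\Omega))$, so this is essentially the same situation already treated in Lemma~\ref{lem_aux}. The division issue is standard: at any $t_0$ with $e(t_0) > 0$ the argument above is valid directly, and at points where $e$ vanishes the inequality $e(t_0) \le \EE(t_0)$ is trivial since $\EE \ge 0$; alternatively one runs the comparison-principle argument of Lemma~\ref{lem:v0w0} directly on $w = \EE - e$ without ever dividing, checking the sign of $D_t^{\alpha_i} w(t_0)$ at a hypothetical negative minimum of $w$. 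A minor point to verify is that the pointwise (in $t$) inequality chain is legitimate given that $R_h$ and the $q_i D_t^{\alpha_i}$ terms are only controlled for $t > 0$; this is fine since the conclusion at $t = 0$ is immediate from $e(0) = 0 \le \EE(0)$.
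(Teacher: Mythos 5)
Your proposal is correct and follows essentially the same route as the paper: form the error equation for $u_h-u$, test with the error, invoke Lemma~\ref{lem_aux} together with the coercivity of $\LL$ and Cauchy--Schwarz to obtain $(D_t^{\bar\alpha}+\lambda)\|e(\cdot,t)\|\le\|R_h(\cdot,t)\|$, and then apply the comparison principle of Lemma~\ref{lem:v0w0} to $\EE-\|e(\cdot,t)\|$. The extra care you take over dividing by $\|e(\cdot,t)\|$ and over the regularity of $t\mapsto\|e(\cdot,t)\|$ is sound (and is implicitly absorbed in the paper into Lemma~\ref{lem_aux} and the statement of Lemma~\ref{lem:v0w0}).
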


\begin{proof}
(This is similar to the proof of \cite[Theorem 2.2 and Corollary 2.3]{KopAML22}.)

Set $e: = u_h-u$, so $e(\cdot,0)=0,\ e(x,t)=0$ for $x\in\pt\Omega$, and $\left(D _t ^{\bar\alpha} + \LL\right)e(\cdot,t) = R_h(\cdot,t)$ for~$t>0$. Multiply this equation by $e(\cdot,t)$ then integrate over~$\Omega$;  invoking Lemma~\ref{lem_aux} and
$\langle \LL v,v\rangle\ge \lambda\|v\|^2$, we get
\beq\label{e_eq}
(D_t^{\bar\alpha}+\lambda) \|e(\cdot,t)\|\le \|R_h(\cdot,t)\| \quad\text{for } t>0.
\eeq
Combining this with our hypothesis \eqref{res_bound}, one has $(D_t^{\bar\alpha}+\lambda)(\EE-\|e(\cdot,t)\|)\ge 0$. Now an application of Lemma~\ref{lem:v0w0} yields $\|(u_h-u)(\cdot,t)\|\le \EE(t)$ $\forall\,t\ge0$.
\end{proof}

In Theorem~\ref{the_L2}, one can replace the condition $\EE\in C[0,T]$ of \eqref{ycondition}  by $\lim_{t\to 0^+}\EE(t)\ge 0$ exists; see~Remark~\ref{rem:limt0} and Corollary~\ref{cor:wnotcont}.

Note that the  proof of Theorem~\ref{the_L2} did not require existence of a solution of \eqref{IVP}, which we have proved only for the constant-coefficient case of Section~\ref{ssec_const} and the case $\alpha_1=1$ and $q_1(t)>0$ of Section~\ref{sec:special}.

The next corollary presents a possible choice of $\EE(t)$ to use in~\eqref{res_bound}.

\begin{corollary}\label{cor:L2boundEE0}
Assume the hypotheses of Theorem~\ref{the_L2}. Then the error $e=u_h-u$ satisfies
\begin{equation}\label{R0bound}
\|(u_h-u)(\cdot,t)\|\le \sup_{0<s\le t}\!\left\{ \frac{ \|R_h(\cdot,s)\|}{\RR_0(s) }\right\},
\ \text{ where }
\RR_0(t) := \lambda +  \sum_{i=j}^{\ell} q_i(t)\, t^{-\alpha_i}/\Gamma(1-\alpha_i)
\end{equation}
where $j=1$ if $\alpha_1<1$ and $j=2$ if $\alpha_1=1$.
\end{corollary}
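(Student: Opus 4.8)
The plan is to exhibit a concrete barrier function $\EE$ satisfying the hypotheses of Theorem~\ref{the_L2} and then apply that theorem. The natural candidate is the constant-in-$t$-to-the-supremum function
\[
\EE(t) := \sup_{0<s\le t}\frac{\|R_h(\cdot,s)\|}{\RR_0(s)},
\]
which is nonnegative and nondecreasing in $t$; the nondecreasing property will be what makes the fractional derivatives work in our favour. First I would check that $\EE$ satisfies the regularity hypothesis~\eqref{ycondition} (or rather the weakened version permitted after Theorem~\ref{the_L2}): $\EE$ is bounded on $(0,T]$ since $\|R_h(\cdot,\cdot)\|$ is and $\RR_0(s)$ is bounded below away from $0$ for $s$ bounded away from $0$ (indeed $\RR_0(s)\to+\infty$ as $s\to0^+$ when $\alpha_j<1$, and $\RR_0(s)\ge\lambda+q_1(s)>0$ when $\alpha_1=1$ provided $q_1>0$; in the borderline degenerate case one argues as in Lemma~\ref{lem:v0w0}); and $\EE$ is a supremum of continuous functions hence lower semicontinuous, monotone, so it differs from a continuous function at most at countably many jump points and in particular $\lim_{t\to0^+}\EE(t)$ exists and is $\ge0$, which is exactly the relaxed condition noted after Theorem~\ref{the_L2}. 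The local Lipschitz/$W^{1,\infty}$ requirement on $(\epsilon,T]$ follows likewise since a bounded monotone function on a compact interval is of bounded variation, and here it is in fact Lipschitz away from $0$ because its difference quotients are controlled by those of $\|R_h\|/\RR_0$.

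The heart of the argument is verifying the residual bound~\eqref{res_bound}, namely
\[
\|R_h(\cdot,t)\|\le\Bigl(\sum_{i=1}^{\ell}q_i(t)D_t^{\alpha_i}+\lambda\Bigr)\EE(t)\qquad\text{for all }t>0.
\]
Here I would use that $\EE$ is nondecreasing: from the reformulation~\eqref{altCaputo}, for any nondecreasing $y$ one has $D_t^{\alpha_i}y(t)\ge \Gamma(1-\alpha_i)^{-1}t^{-\alpha_i}[y(t)-y(0)]\ge \Gamma(1-\alpha_i)^{-1}t^{-\alpha_i}y(t)$ if also $y(0)\ge0$ — but I need the version without the $-y(0)$ term, so more carefully I would simply drop the nonnegative integral term in~\eqref{altCaputo} and also drop $y(0)\ge0$, giving $D_t^{\alpha_i}\EE(t)\ge \Gamma(1-\alpha_i)^{-1}t^{-\alpha_i}\,\EE(t)$ once one notes $\EE(t)-\EE(0)\le\EE(t)$ is the wrong direction — so instead the clean statement is $\Gamma(1-\alpha_i)D_t^{\alpha_i}\EE(t)\ge t^{-\alpha_i}[\EE(t)-\EE(0)]$ and then, since we want a lower bound on $(\cdots)\EE(t)$ and $\EE(0)\ge0$, we should actually not subtract. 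The resolution, which I would spell out, is: take $\EE$ defined so that $\EE(0):=\lim_{t\to0^+}\EE(t)\ge0$, and use $D_t^{\alpha_i}\EE(t)\ge\Gamma(1-\alpha_i)^{-1}t^{-\alpha_i}[\EE(t)-\EE(0)]$; combined with $\lambda\EE(t)\ge\lambda\EE(0)$ this does not immediately close the gap, so the genuinely correct route is to work with $\EE(t)-\EE(0)$ as the barrier when $\EE(0)>0$, or — most simply — to observe that $\RR_0(t)\EE(t)\ge\|R_h(\cdot,t)\|$ holds by the very definition of the supremum, and that $\sum_i q_i(t)D_t^{\alpha_i}\EE(t)+\lambda\EE(t)\ge \RR_0(t)\EE(t)$ because $D_t^{\alpha_i}\EE(t)\ge \Gamma(1-\alpha_i)^{-1}t^{-\alpha_i}\EE(t)$ holds whenever $\EE$ is nondecreasing with $\EE(t)\ge \EE(0)\ge 0$, using $\EE(t)-\EE(0)\ge 0$ together with the pointwise inequality $\EE(t)\le \EE(t)$... .

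To state the decisive inequality cleanly: for nondecreasing nonnegative $\EE$ with $\EE(t)\ge\EE(0)\ge0$, equation~\eqref{altCaputo} gives $\Gamma(1-\alpha_i)D_t^{\alpha_i}\EE(t)=t^{-\alpha_i}[\EE(t)-\EE(0)]+\int_0^t\alpha_i(t-s)^{-\alpha_i-1}[\EE(t)-\EE(s)]\,ds$, and the integral term is $\ge\int_0^t\alpha_i(t-s)^{-\alpha_i-1}[\EE(t)-\EE(t)]\,ds=0$ is too weak; instead I bound the integral below by discarding it and separately note $t^{-\alpha_i}[\EE(t)-\EE(0)]\ge 0$, which still only yields $D_t^{\alpha_i}\EE(t)\ge0$. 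Hence the \emph{real} key step — and the main obstacle — is to get the factor $t^{-\alpha_i}\EE(t)$ rather than $t^{-\alpha_i}[\EE(t)-\EE(0)]$: this is exactly why the corollary's $\RR_0$ contains no $\EE(0)$ correction, and it is handled by noting that when $\EE(0)>0$ the term $\Gamma(1-\alpha_i)D_t^{\alpha_i}\EE(t)\ge \int_0^t\alpha_i(t-s)^{-\alpha_i-1}[\EE(t)-\EE(s)]\,ds\ge \EE(0)\bigl[t^{-\alpha_i}-0\bigr]$ is false in general, so one must instead argue via Corollary~\ref{cor:wnotcont}/Lemma~\ref{lem:v0w0} applied to the shifted function, or accept a harmless term; I would therefore present the proof by invoking Theorem~\ref{the_L2} directly with this $\EE$, verifying~\eqref{res_bound} through the chain $\|R_h(\cdot,t)\|\le\RR_0(t)\EE(t)=\lambda\EE(t)+\sum_{i\ge j}q_i(t)t^{-\alpha_i}\Gamma(1-\alpha_i)^{-1}\EE(t)\le\lambda\EE(t)+\sum_{i\ge j}q_i(t)D_t^{\alpha_i}\EE(t)$, where the last inequality is the monotonicity fact $\Gamma(1-\alpha_i)D_t^{\alpha_i}\EE(t)\ge t^{-\alpha_i}\EE(t)$ valid because $\EE$ is nondecreasing with $\EE\ge0$ and (crucially) because replacing $\EE$ by $\EE-\EE(0^+)$ changes nothing in Theorem~\ref{the_L2} — so we may and do assume $\EE(0^+)=0$ without loss of generality by subtracting the constant, which is exactly why no $\EE(0)$ appears. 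That normalisation step is the one to state carefully; everything else is routine, and the conclusion $\|(u_h-u)(\cdot,t)\|\le\EE(t)$ is then immediate from Theorem~\ref{the_L2}.
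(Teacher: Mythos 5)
Your overall idea --- feed Theorem~\ref{the_L2} a nonnegative, nondecreasing barrier whose Caputo derivative dominates $t^{-\alpha_i}/\Gamma(1-\alpha_i)$ times the barrier --- is the right one, and it is essentially the paper's idea too. But the paper implements it with the much simpler barrier $\kappa\,\EE_0$, where $\kappa:=\sup_{0<s\le t}\{\|R_h(\cdot,s)\|/\RR_0(s)\}$ is a \emph{constant} and $\EE_0(t):=1$ for $t>0$, $\EE_0(0):=0$. For this function \eqref{altCaputo} gives exactly $D_t^{\alpha_i}\EE_0(t)=t^{-\alpha_i}/\Gamma(1-\alpha_i)$ (the jump at $0$ supplies the first term, and the integral term vanishes identically), so $(D_t^{\bar\alpha}+\lambda)\kappa\EE_0=\kappa\RR_0(t)\ge\|R_h(\cdot,t)\|$ with no inequality-chasing at all. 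Your choice of the running supremum as the barrier is workable in principle but buys you nothing (the conclusion $\|e(\cdot,t)\|\le\EE(t)$ is evaluated at the endpoint anyway) and costs you two genuine difficulties.

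First, the justification you finally settle on for the key inequality $\Gamma(1-\alpha_i)D_t^{\alpha_i}\EE(t)\ge t^{-\alpha_i}\EE(t)$ --- ``we may assume $\EE(0^+)=0$ without loss of generality by subtracting the constant'' --- is wrong, and your own earlier hesitations in the proof correctly sensed this. Subtracting $c:=\EE(0^+)$ removes the jump at $t=0$ and with it the contribution $t^{-\alpha_i}c$ to $D_t^{\alpha_i}$; for the shifted barrier one only gets $(D_t^{\bar\alpha}+\lambda)(\EE-c)\ge\RR_0(t)\,[\EE(t)-c]$, which does \emph{not} dominate $\|R_h(\cdot,t)\|\le\RR_0(t)\EE(t)$ when $c>0$ (take $\EE$ constant: then $D_t^{\alpha_i}(\EE-c)=0$ while $\RR_0\EE>0$). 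The correct move, which you circle but never state, is to \emph{assign} $\EE(0):=0$ while leaving $\EE$ unchanged on $(0,T]$ --- i.e.\ deliberately create the jump --- which is exactly what $\EE_0$ does and is what the remark after Theorem~\ref{the_L2} (via Remark~\ref{rem:limt0} and Corollary~\ref{cor:wnotcont}) licenses. Second, your barrier must satisfy the regularity hypothesis \eqref{ycondition}, i.e.\ be locally Lipschitz on $(\epsilon,T]$; this requires $s\mapsto\|R_h(\cdot,s)\|$ to be locally Lipschitz, which is not guaranteed by the hypotheses of Theorem~\ref{the_L2} (they only put $u_h$ in $W^{1,\infty}(\epsilon,T;L_2(\Omega))$, and $R_h$ involves $\LL u_h$ and $f$). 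The constant barrier is immune to both problems. As written, your proof has a gap at its decisive step; repaired along the lines above it collapses into the paper's argument.
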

\begin{proof}
(The proof is similar to part of the proof of \cite[Corollary 2.4]{KopAML22}.)

Set $\kappa = \sup_{0<s\le t}\!\left\{ \|R_h(\cdot,s)\|/\RR_0(s) \right\}$. If $\kappa=\infty$ the result is trivial, so assume that $0\le \kappa \in \bR$. Define the barrier function $\EE_0(t)$ by  $\EE_0(t):=1$ for $t>0$ and $\EE_0(0):=0$.
Note that $\EE_0$ satisfies the conditions of Theorem~\ref{the_L2}.
From~\eqref{altCaputo}  (see also \cite[Remark 2.9]{KopAML22})
one has $D _t ^{\alpha_i}  \EE_0(t) = t^{-\alpha_i}/\Gamma(1-\alpha_i)$ for $t>0$ if $\alpha_i<1$, while $D_t^{\alpha_1}\EE_0(t)=0$ for $t>0$ if $\alpha_1=1$,  so
 $ \left(D _t ^{\bar\alpha} +\lambda \right)\kappa \EE_0(t) = \kappa\RR_0(t)$ in all cases. Thus we can apply Theorem~\ref{the_L2} with $\EE= \kappa\EE_0$ to finish the proof.
\end{proof}

We shall present a second possible choice of $\EE(t)$ after we list some properties of the hypergeometric function
$_2F_1(\alpha_i,-\beta\,;\,\alpha_1\,;\,s)$ that is discussed in \cite[Section 15]{AS64} and \cite{DLMF15}.

Set  $\beta:=1-\alpha_1$. Then
$\frac{d}{ds}\bigl(s^{-\beta}{}_2F_1(\alpha_i,-\beta\,;\,\alpha_1\,;\,s)\bigr)
=-\beta s^{-\beta-1}{}_2F_1(\alpha_i,-\beta\,;\,-\beta\,;\,s)$ \cite[Section 15.2.4]{AS64} \cite[item 15.5.4]{DLMF15},
while by \cite[Section 15.1.8]{AS64} \cite[item 15.4.6]{DLMF15}
 one gets ${}_2F_1(\alpha_i,-\beta\,;\,-\beta\,;\,s)=(1-s)^{-\alpha_i}$.
Hence
\begin{equation}\label{2F1A}
\frac{d}{ds}\Bigl(s^{-\beta}{}_2F_1(\alpha_i,-\beta\,;\,\alpha_1\,;\,s)\Bigr)
=-\beta\,s^{-\beta-1}(1-s)^{-\alpha_i}.
\end{equation}
Furthermore,
\begin{equation}\label{2F1B}
{}_2F_1(\alpha_i,-\beta\,;\,\alpha_1\,;\,1)
= \frac{\Gamma(\alpha_1)\Gamma(1-\alpha_i)}{\Gamma(\alpha_1-\alpha_i)}
\end{equation}
by \cite[Section 15.1.20]{AS64}\cite[item 15.4.20]{DLMF15}.

\begin{corollary}\label{cor:L2boundEE1}
Assume the hypotheses of Theorem~\ref{the_L2} and that $\alpha_1<1$.
Then the error $e=u_h-u$ satisfies
\begin{equation}\label{R1bound}
\|(u_h-u)(\cdot,t)\|\le t^{\alpha_1-1}\sup_{0<s\le t}\!\left\{ \frac{ \|R_h(\cdot,s)\|}{\RR_1(s) }\right\},
\end{equation}
where
\begin{align}\notag
 \RR_1(t) &  := \lambda\EE_1(t) +  \tau^{-\beta}\sum_{i=1}^{\ell} \frac{q_i(t)\, t^{-\alpha_i}}{\Gamma(1-\alpha_i)}\bigl[1-\rho_i(\hat\tau)\bigr],
 \qquad
 \rho_i(s):=0\;\;\mbox{for}\;s\ge1,
\\\label{rho_i}
  \rho_i(s)& :=
  {}_2F_1(\alpha_i,-\beta\,;\,\alpha_1\,;\,s)-
  {\textstyle\frac{\Gamma(\alpha_1)\Gamma(1-\alpha_i)}{\Gamma(\alpha_1-\alpha_i)}}\,s^{\beta}
  \le (1-s)^{1-\alpha_i}<1
  \;\;\mbox{for}\;s\in(0,1),
\end{align}
with $\beta:=1-\alpha_1$, $ \hat\tau := \tau/t$ and $\EE_1(t):= (\max\{\tau, t\})^{\alpha_1-1}$ for $t>0, \ \EE_1(0):=0$.
Here $\tau>0$ is an  arbitrary  user-chosen parameter.

Furthermore, in \eqref{R1bound} one can replace $t^{\alpha_1-1}$ by $\EE_1(t)$ if desired.
\end{corollary}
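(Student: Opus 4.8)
The plan is to apply Theorem~\ref{the_L2} with the barrier function $\EE(t) := t^{\alpha_1-1}\kappa\,\EE_1(t)$ where $\kappa := \sup_{0<s\le t}\{\|R_h(\cdot,s)\|/\RR_1(s)\}$, mimicking the structure of the proof of Corollary~\ref{cor:L2boundEE0}: if $\kappa=\infty$ the bound is trivial, so assume $\kappa\in[0,\infty)$. The core of the argument is to compute $(D_t^{\bar\alpha}+\lambda)[t^{\alpha_1-1}\EE_1(t)]$ and show it equals the stated $\RR_1(t)$ (up to the factor $\kappa$), after which Theorem~\ref{the_L2} delivers $\|(u_h-u)(\cdot,t)\|\le t^{\alpha_1-1}\kappa\EE_1(t)$, and since $\EE_1(t)=(\max\{\tau,t\})^{\alpha_1-1}\le t^{\alpha_1-1}$ (as $\alpha_1<1$), one may also replace the outer $t^{\alpha_1-1}$ by $\EE_1(t)$ in the final bound; conversely $\EE_1(t)\le\EE_1(0^+)$ issues need the $\lim_{t\to0^+}$ relaxation of Theorem~\ref{the_L2} noted just after its proof, since $\EE_1(0)=0$ but $t^{\alpha_1-1}\to\infty$ --- one should check $t^{\alpha_1-1}\EE_1(t)$ is bounded near $0$ (it equals $\tau^{\alpha_1-1}t^{\alpha_1-1}$ for $t<\tau$, which blows up), so in fact the barrier to feed into Theorem~\ref{the_L2} must be handled as in Corollary~\ref{cor:L2boundEE0}, using that the regularity hypothesis \eqref{ycondition} only requires finiteness on $[\epsilon,T]$ and a limit at $0^+$; here $t^{\alpha_1-1}\EE_1(t)=t^{\alpha_1-1}\tau^{\alpha_1-1}$ for small $t$ does \emph{not} have a finite limit, so one instead checks the product $\EE_1(t)\cdot(\text{the }t^{\alpha_1-1}\text{ that is already present as }\EE\text{ itself})$: the honest choice is $\EE(t):=\kappa\,t^{\alpha_1-1}\EE_1(t)$ only for $t\ge\tau$, patched by $\kappa\,\tau^{\alpha_1-1}$-type behaviour --- more cleanly, take $\EE$ to be the function whose $(D_t^{\bar\alpha}+\lambda)$-image is $\kappa\RR_1$ by construction, namely $\EE(t):=\kappa\,t^{\alpha_1-1}\EE_1(t)$, and verify it satisfies the relaxed hypotheses directly.

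The key computation is: for $t>0$, evaluate $D_t^{\alpha_i}\bigl[s\mapsto s^{\alpha_1-1}\EE_1(s)\bigr](t)$. Write $g(s):=s^{\alpha_1-1}\EE_1(s)$, so $g(s)=\tau^{\alpha_1-1}s^{\alpha_1-1}$ for $0<s\le\tau$ and $g(s)=s^{2(\alpha_1-1)}$ for $s\ge\tau$ (this needs $g(0^+)$ to exist --- it does not if $\alpha_1<1$, so one must instead use a barrier of the form $\EE_1(t)$ alone and absorb the $t^{\alpha_1-1}$ differently). Rather, I expect the intended barrier is $\EE(t)=t^{\alpha_1-1}\EE_1(t)$ understood via \eqref{altCaputo} termwise, and the quantities \eqref{2F1A}--\eqref{2F1B} are exactly what is needed: splitting the integral in \eqref{altCaputo} at $s=\tau$, on $(0,\tau)$ the function is a pure power $c\,s^{\alpha_1-1}$ whose Caputo $\alpha_i$-derivative involves the Beta integral $\int_0^{t}(t-s)^{-\alpha_i-1}[\,t^{\alpha_1-1}-s^{\alpha_1-1}]\,ds$ restricted to $(0,\tau)$, and a change of variable $s=t\sigma$ produces precisely ${}_2F_1(\alpha_i,-\beta;\alpha_1;\hat\tau)$ via its integral representation, with \eqref{2F1A} controlling the derivative structure and \eqref{2F1B} giving the boundary value at $\hat\tau=1$ that makes $\rho_i$ vanish continuously at $s=1$. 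Assembling the pieces over $i=1,\dots,\ell$ and adding $\lambda\EE_1(t)$ yields $\RR_1(t)$ as in \eqref{rho_i}; the inequality $\rho_i(s)\le(1-s)^{1-\alpha_i}<1$ on $(0,1)$ follows from integrating \eqref{2F1A} from $s$ to $1$ (using \eqref{2F1B} at the upper limit) and bounding $(1-\sigma)^{-\alpha_i}\le(1-\sigma)^{-\alpha_i}$ crudely, or more precisely from the explicit antiderivative, which shows $\RR_1(t)>0$ so that $\RR_1$ is a legitimate denominator.

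The main obstacle I anticipate is the bookkeeping in the $\alpha_i$-fractional differentiation of the piecewise-power barrier $t^{\alpha_1-1}\EE_1(t)$ and matching it exactly to the hypergeometric expression: one must correctly identify the Euler-type integral $\int_0^1 \sigma^{-\beta-1}(1-\sigma)^{\cdots}\,d\sigma$ arising from \eqref{altCaputo} with $s^{-\beta}{}_2F_1(\alpha_i,-\beta;\alpha_1;s)$, track the factor $\Gamma(1-\alpha_i)$, and verify the two regimes $\hat\tau\ge1$ (i.e.\ $t\le\tau$, where $\rho_i\equiv0$ and $\EE_1(t)=\tau^{\alpha_1-1}$) and $\hat\tau<1$ glue together. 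A secondary technical point is confirming that $\EE(t):=\kappa\,t^{\alpha_1-1}\EE_1(t)$ satisfies the relaxed version of hypothesis \eqref{ycondition} (Lipschitz on each $[\epsilon,T]$, nonnegative, and with $\lim_{t\to0^+}$ existing or the Corollary~\ref{cor:wnotcont}/Remark~\ref{rem:limt0} substitute applying) --- for $t\in(0,\tau)$ this barrier behaves like $\tau^{\alpha_1-1}t^{\alpha_1-1}$ which tends to $+\infty$, so the correct move, exactly as at the end of the proof of Corollary~\ref{cor:L2boundEE0}, is to observe that Theorem~\ref{the_L2} only needs $\EE\ge0$ and the relaxed regularity, and the blow-up at $0^+$ is harmless because $\|(u_h-u)(\cdot,t)\|$ is being bounded \emph{above} by it; the final replacement of $t^{\alpha_1-1}$ by $\EE_1(t)$ in \eqref{R1bound} is then immediate from $\EE_1(t)\le t^{\alpha_1-1}$ for all $t>0$ since $\alpha_1-1<0$ forces $(\max\{\tau,t\})^{\alpha_1-1}\le t^{\alpha_1-1}$.
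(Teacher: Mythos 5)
There is a genuine gap, and it is located exactly where you yourself sensed trouble: your choice of barrier function. You propose $\EE(t):=\kappa\,t^{\alpha_1-1}\EE_1(t)$, observe correctly that this behaves like $\kappa\,\tau^{\alpha_1-1}t^{\alpha_1-1}\to+\infty$ as $t\to0^+$ (so it fails even the relaxed hypothesis of Theorem~\ref{the_L2}, which still requires $\lim_{t\to0^+}\EE(t)$ to exist), and then never resolve the contradiction --- the final claim that ``the blow-up at $0^+$ is harmless'' is not available, since the comparison argument needs $D_t^{\alpha_i}\EE$ to be defined via \eqref{altCaputo}, which requires $\EE$ bounded near $0$. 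The fix is that the barrier is simply $\EE:=\kappa\,\EE_1$ with $\EE_1(t)=(\max\{\tau,t\})^{\alpha_1-1}$, which is \emph{bounded}: it is the constant $\tau^{\alpha_1-1}$ on $(0,\tau]$ and equals $t^{\alpha_1-1}$ only for $t\ge\tau$, so $\lim_{t\to0^+}\EE_1(t)=\tau^{\alpha_1-1}$ exists and Corollary~\ref{cor:wnotcont} applies with no difficulty. The structure of $\RR_1$ already signals this: its zeroth-order term is $\lambda\EE_1(t)$, not $\lambda t^{\alpha_1-1}\EE_1(t)$, so $\RR_1$ is (a lower bound for) $(D_t^{\bar\alpha}+\lambda)\EE_1$, not of $t^{\alpha_1-1}\EE_1$. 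The factor $t^{\alpha_1-1}$ in \eqref{R1bound} enters only at the very last step, via the elementary inequality $\EE_1(t)\le t^{\alpha_1-1}$; this also explains the ``furthermore'' clause, which is the \emph{sharper primary} conclusion $\|e(\cdot,t)\|\le\kappa\,\EE_1(t)$ rather than something derived afterwards (your reading of that clause is inverted, and as written would produce a spurious extra factor of $\EE_1$).

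Your second paragraph does contain the right analytic ingredients --- the Euler-type integral obtained from \eqref{altCaputo} after rescaling, its identification with ${}_2F_1(\alpha_i,-\beta;\alpha_1;\hat\tau)$ through \eqref{2F1A}--\eqref{2F1B}, and the vanishing of $\rho_i$ at $\hat\tau=1$ --- but you apply them to the wrong function. The clean computation writes $\EE_1(t)=\tau^{-\beta}\EE_0(t)-(\tau^{-\beta}-t^{-\beta})^+$ with $\EE_0$ from Corollary~\ref{cor:L2boundEE0}; for $t\le\tau$ the second piece vanishes identically and one gets exact equality $(D_t^{\bar\alpha}+\lambda)\kappa\EE_1=\kappa\RR_1$ (consistent with $\rho_i(\hat\tau)=0$ for $\hat\tau\ge1$), while for $t>\tau$ the second piece contributes $-\beta\int_\tau^t s^{-\beta-1}(t-s)^{-\alpha_i}\,ds$, which after the substitution $\hat s=s/t$ is precisely $\hat\tau^{-\beta}\rho_i(\hat\tau)$ by \eqref{2F1A}, yielding $(D_t^{\bar\alpha}+\lambda)\kappa\EE_1\ge\kappa\RR_1$. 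The bound $\rho_i(\hat\tau)\le(1-\hat\tau)^{\alpha_i}$ then follows by comparing the integrand against the $i=1$ case. As written, your proposal would require the Caputo derivative of the piecewise power $\tau^{\alpha_1-1}s^{\alpha_1-1}$ / $s^{2(\alpha_1-1)}$, which is a different (and for $\alpha_1\le\tfrac12$ not even integrable) object and does not reproduce $\RR_1$.
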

\begin{proof}
Set $\kappa = \sup_{0<s\le t}\!\left\{ \|R_h(\cdot,s)\|/\RR_1(s) \right\}$. If $\kappa=\infty$ the result is trivial, so assume that $0\le \kappa \in \bR$.
Observe that
$\EE_1(t)=\tau^{-\beta}\EE_0(t) - (\tau^{-\beta}-t^{-\beta})^+$, where $\EE_0$ was defined in the proof of Corollary~\ref{cor:L2boundEE0}.  From~\eqref{altCaputo}  one has
$D _t ^{\alpha_i}  \EE_0(t) = t^{-\alpha_i}/\Gamma(1-\alpha_i)$ for $t>0$ and $i=1,\dots,\ell$, so for $t\le \tau$ (i.e., $\hat\tau\ge 1$) we get
$ \left( D _t ^{\bar\alpha} +\lambda \right)\kappa \EE_1(t) = \kappa\RR_1(t)$ since in $\RR_1$ one has $\rho_i(\hat\tau)=0$ for all~$i$.

For $t>\tau$, since $\pt_s (\tau^{-\beta}-s^{-\beta})^+ =-\pt_s(s^{-\beta})
=\beta s^{-\beta-1} $, for $i=1,2,\dots,\ell$ we have
\begin{align*}
\Gamma(1-\alpha_i)\,D^{\alpha_i}_t \EE_1(t)
	&= \tau^{-\beta}t^{-\alpha_i} -\beta\int_{\tau}^t\! s^{-\beta-1}(t-s)^{-\alpha_i}\,ds \\
&=
\tau^{-\beta}t^{-\alpha_i} -\beta t^{-\beta-\alpha_i}\int_{\hat\tau}^1\! \hat s^{-\beta-1}(1-\hat s)^{-\alpha_i}\,d\hat s\\
&=
\tau^{-\beta}t^{-\alpha_i} - t^{-\beta-\alpha_i}
\Bigl(\hat\tau^{-\beta}\,\rho_i(\hat\tau)\Bigr),
\end{align*}
from \eqref{2F1A} and \eqref{rho_i}. But $t^{-\beta-\alpha_i}\,\hat\tau^{-\beta}=\tau^{-\beta}t^{-\alpha_i}$,
so $\Gamma(1-\alpha_i)\,D^{\alpha_i}_t \EE_1(t)=\tau^{-\beta}t^{-\alpha_i}[1-\rho_i(\hat\tau)]$.
Hence, $ \left( D _t ^{\bar\alpha} +\lambda \right)\kappa \EE_1(t) \ge \kappa\RR_1(t)$ for $t> \tau$.

For the bound on $\rho_i$ in \eqref{rho_i},
the above argument shows that
\begin{align*}
\hat\tau^{-\beta}\,\rho_i(\hat\tau)&=\beta\int_{\hat\tau}^1\! \hat s^{-\beta-1}(1-\hat s)^{-\alpha_i}\,d\hat s\\
	&\le \beta (1-\hat\tau)^{\alpha_1-\alpha_i} \int_{\hat\tau}^1\! \hat s^{-\beta-1}(1-\hat s)^{-\alpha_1}\,d\hat s \\	
&=(1-\hat\tau)^{\alpha_1-\alpha_i}\,\Bigl(\hat\tau^{-\beta}(1-\hat\tau)^{1-\alpha_1}\Bigr)
=\hat\tau^{-\beta}\,(1-\hat\tau)^{\alpha_i},
\end{align*}
where we also used $(1-\hat s)^{-\alpha_i}/(1-\hat s)^{-\alpha_1}\le (1-\hat \tau)^{\alpha_1-\alpha_i}$ as $\alpha_1\ge\alpha_i$.
Hence, $\rho_i(\hat\tau)\le (1-\hat\tau)^{\alpha_i}$, as desired.

We can now apply Theorem~\ref{the_L2} with $\EE= \kappa\EE_1$ to obtain the bound
\[
\|(u_h-u)(\cdot,t)\|\le \EE_1(t) \sup_{0<s\le t}\!\left\{ \frac{ \|R_h(\cdot,s)\|}{\RR_1(s) }\right\};
\]
then $\EE_1(t) \le t^{\alpha_1-1}$ completes the proof.
\end{proof}

One could extend the proof of  Corollary~\ref{cor:L2boundEE1} to include the case $\alpha=1$, but in this case the result becomes the same as that of Corollary~\ref{cor:L2boundEE0}.

Note that \eqref{rho_i} for $i=1$ simplifies to
$\rho_1(s)=\left((1-s)^+\right)^{1-\alpha_1}$
because of \cite[Section 15.1.8]{AS64} \cite[item 15.4.6]{DLMF15} and $\{\Gamma(\alpha_1-\alpha_i)\}^{-1}=0$.

Finally, we give a general result that relates $\|(u_h-u)(\cdot,t)\|$ to $\|R_h(\cdot, t)\|$ without involving any  barrier function --- but this result, unlike Corollaries~\ref{cor:L2boundEE0} and~\ref{cor:L2boundEE1}, depends on the assumption that $\left(D _t ^{\bar\alpha} +\lambda \right)^{-1}\|R_h(\cdot, t)\|$ exists.

\begin{corollary}\label{cor1_L2}
Assume the hypotheses of Theorem~\ref{the_L2}. Recall the definition of $\left(D _t ^{\bar\alpha} +\lambda \right)^{-1}$ in Section~\ref{sec:nonnegative}. If $\left(D _t ^{\bar\alpha} +\lambda \right)^{-1}\|R_h(\cdot, t)\|$ exists,
then
\beq\label{L2_error}
\|(u_h-u)(\cdot,t)\|\le \left( \sum_{i=1}^{\ell} q_i(t) D _t ^{\alpha_i} +\lambda \right)^{-1}\|R_h(\cdot, t)\|\qquad\mbox{for}\;\;t>0.
\eeq
\end{corollary}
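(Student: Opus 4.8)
The plan is to derive \eqref{L2_error} from Theorem~\ref{the_L2} by choosing the barrier function $\EE$ to be precisely the object on the right-hand side of \eqref{L2_error}. Set $g(t):=\|R_h(\cdot,t)\|$ and, assuming as hypothesised that $\left(D_t^{\bar\alpha}+\lambda\right)^{-1}g$ exists, define $\EE(t):=\left(D_t^{\bar\alpha}+\lambda\right)^{-1}g(t)$, i.e.\ $\EE$ is the unique solution of the initial-value problem $\left(D_t^{\bar\alpha}+\lambda\right)\EE(t)=g(t)$ for $t>0$ with $\EE(0)=0$. With this choice, the residual bound \eqref{res_bound} holds with equality, so the only thing that must be checked before invoking Theorem~\ref{the_L2} is that $\EE$ meets the regularity and sign requirements there, namely that $\EE$ satisfies \eqref{ycondition} and $\EE(t)\ge 0$ for all $t\ge 0$.

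First I would address nonnegativity. Since $g=\|R_h(\cdot,\cdot)\|\ge 0$ and the initial value is $0\ge 0$, Lemma~\ref{lem:v0w0} (the comparison principle for \eqref{IVP}, applied with $v=g$, $w_0=0$) gives $\EE(t)\ge 0$ for $t\in[0,T]$ --- provided $g$ has at most finitely many jump discontinuities and is left-continuous, and provided $\EE$ has the regularity assumed in that lemma. In the generic situation $g\in C[0,T]$ this is immediate; more generally one notes that $g$ inherits whatever mild regularity $R_h$ has, and $u_h$ is a computed (typically piecewise-polynomial-in-time) approximation, so the left-continuity/finite-jump framework set up just before Lemma~\ref{lem:v0w0} applies. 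The regularity hypothesis \eqref{ycondition} on $\EE$ itself is exactly the defining regularity class in which $\left(D_t^{\bar\alpha}+\lambda\right)^{-1}$ was defined in Section~\ref{sec:nonnegative}, so it holds by construction; if $\alpha_1=1$ one additionally uses that the inverse solution lies in $C^1(0,T]$ (or invokes Lemma~\ref{lem:special} when $q_1>0$).

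Once $\EE$ is admissible, Theorem~\ref{the_L2} applies verbatim and yields $\|(u_h-u)(\cdot,t)\|\le\EE(t)=\left(\sum_{i=1}^{\ell}q_i(t)D_t^{\alpha_i}+\lambda\right)^{-1}\|R_h(\cdot,t)\|$ for all $t\ge 0$, which is \eqref{L2_error}.

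The main obstacle, and the reason this corollary --- unlike Corollaries~\ref{cor:L2boundEE0} and~\ref{cor:L2boundEE1} --- carries the standing caveat about existence, is precisely the hypothesis that $\left(D_t^{\bar\alpha}+\lambda\right)^{-1}g$ exists: for variable $q_i(t)$ we have not established existence of solutions to \eqref{IVP} in general (only in the constant-coefficient case of Section~\ref{ssec_const} and the case $\alpha_1=1$, $q_1>0$ of Section~\ref{sec:special}). Everything else is routine: once existence is \emph{assumed}, the barrier $\EE$ is handed to us, \eqref{res_bound} holds with equality, and the sign condition follows from the comparison principle Lemma~\ref{lem:v0w0}. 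So the proof is essentially a one-line application of Theorem~\ref{the_L2} with the optimal choice of barrier, the content being entirely in recognising that the assumed-existing inverse is a legitimate barrier function in the sense of that theorem.
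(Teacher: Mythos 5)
Your proof is correct and follows exactly the paper's argument: take $\EE:=\left(D_t^{\bar\alpha}+\lambda\right)^{-1}\|R_h(\cdot,t)\|$, note $\EE(0)=0$ and that \eqref{res_bound} holds with equality, deduce $\EE\ge 0$ from Lemma~\ref{lem:v0w0}, and invoke Theorem~\ref{the_L2}. The paper's own proof is just a terser version of the same three steps.
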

\begin{proof}
Set $\EE(t):= (D_t^{\bar\alpha}+\lambda)^{-1}\|R_h(\cdot, t)\|$. Then $\EE(0)=0$ and $(D_t^{\bar\alpha}+\lambda)\EE(t)=\|R_h(\cdot, t)\|$  imply $\EE(t)\ge 0$ by Lemma~\ref{lem:v0w0}. Thus we can invoke Theorem~\ref{the_L2} to get \eqref{L2_error}.
\end{proof}

\section{$L_\infty(\Omega)$ a posteriori error estimates}\label{sec:Linf}
Throughout Section~\ref{sec:Linf}, let
 $\LL u := \sum_{k=1}^d \left\{ a_k(x)\,\pt^2_{x_k}\!u  + b_k(x)\, \pt_{x_k}\!u \right\}+c(x)\, u$
 in \eqref{problem},
with sufficiently smooth coefficients $\{a_k\}$, $\{b_k\}$ and $c$ in $C(\bar\Omega)$. Assume also that for each $k$ one has $a_k>0$ in $\bar\Omega$, and that $c\ge \lambda\ge 0$.

The condition $\langle \LL v,v\rangle\ge \lambda\|v\|^2$ is not required in this section.

\begin{lemma}[Comparison principle for the initial-boundary value problem]\label{lem_max_pr}
Suppose that
\begin{equation}\label{comp}
\sum_{i=1}^{\ell}\bigl[q_i(t)\, D _t ^{\alpha_i} v(x,t)\bigr]+ \LL v(x,t) \ge 0\ \text{ for }(x,t)\in \Omega\times(0,T],
\end{equation}
where $v(\cdot, t)\in C^2(\Omega)$ for each~$t>0$, and for each $x\in\Omega$ we have $v(x,\cdot)\in W^{1,\infty}(\epsilon,t)$ for all $\epsilon, t$ satisfying $0<\epsilon<t\le T$, and $\lim_{t\to 0^+}v(x,t)\ge 0$ exists.
In \eqref{comp} define $D_t^{\alpha_i} v(x,\cdot)$ for each $x\in\Omega$ by~\eqref{altCaputo} if $\alpha_i<1$. If $\alpha_1=1$,  suppose also that $v_t(x,\cdot)$ (for each $x\in\Omega$) may have jump discontinuities but is a left-continuous function on $(0,T]$. Assume that $v(x,0)\ge 0$ for $x\in \Omega$ and that $v(x, t)\ge 0$ for $x\in\pt\Omega$ and $0\le t\le T$. Then $v(x,t)\ge 0$ for all $(x,t)\in \Omega\times (0,T]$.
\end{lemma}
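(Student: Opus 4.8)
The plan is to argue by contradiction, essentially reducing the multiterm initial-boundary value problem to the scalar initial-value problem situation of Lemma~\ref{lem:v0w0} by localizing at a point where $v$ attains a negative minimum. Suppose the conclusion fails. Since $v(x,0)\ge 0$, $v\ge 0$ on $\partial\Omega$, and (for each fixed $x$) $\lim_{t\to 0^+}v(x,t)$ exists and is nonnegative, a negative value of $v$ would have to be attained at some interior point at some positive time. The first technical issue is to secure the existence of a genuine minimizing point $(x_0,t_0)$ with $x_0\in\Omega$ and $t_0\in(0,T]$ at which $v(x_0,t_0)<0$ and $v(x_0,t_0)\le v(x,t)$ for all $(x,t)$; one takes $\mu := \inf v < 0$, picks times $t_0$ approaching the time-infimum, and uses continuity of $v(\cdot,t)\in C^2(\Omega)$ in $x$ together with the boundary/initial conditions to push the spatial minimizer into the interior. (As in Lemma~\ref{lem:v0w0}, at any jump of $v(x,\cdot)$ or $v_t(x,\cdot)$ we use the left-continuous convention so the infimum over $t$ is attained from the left.) One may need a small perturbation argument here — replacing $v$ by $v(x,t)+\varepsilon(T-t)$ or by $e^{-\mu t}v$ — to ensure the infimum is actually attained rather than merely approached; this mirrors the remark at the end of the proof of Lemma~\ref{lem:v0w0}.

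Once such a point $(x_0,t_0)$ is in hand, I would evaluate the left side of \eqref{comp} there. For the spatial part: since $v(\cdot,t_0)\in C^2(\Omega)$ has an interior minimum at $x_0$, we have $\partial_{x_k}v(x_0,t_0)=0$ and $\partial^2_{x_k}v(x_0,t_0)\ge 0$ for each $k$; with $a_k>0$ this gives $\sum_k\{a_k\partial^2_{x_k}v+b_k\partial_{x_k}v\}(x_0,t_0)\ge 0$, and since $c\ge\lambda\ge 0$ and $v(x_0,t_0)<0$ we get $cv(x_0,t_0)\le 0$. So the sign of $\LL v(x_0,t_0)$ is not immediately forced to be $\ge 0$, but the $c\,v$ term only helps make the left side negative, which is what we want. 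For the temporal part: fixing $x=x_0$, the function $t\mapsto v(x_0,t)$ on $[0,t_0]$ satisfies exactly the hypotheses used in Lemma~\ref{lem:v0w0} — it lies in $W^{1,\infty}(\epsilon,t_0)$ for $0<\epsilon<t_0$, has a left-limit at $0$ that is $\ge 0$, and attains at $t_0$ a negative minimum over $[0,t_0]$ — so the reformulation \eqref{altCaputo} gives $D_t^{\alpha_i}v(x_0,t_0)<0$ for each $i$ with $\alpha_i<1$, while if $\alpha_1=1$ the left-continuity of $v_t(x_0,\cdot)$ at $t_0$ gives $\partial_t v(x_0,t_0)\le 0$. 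Using $q_i\ge 0$ and $\sum_i q_i(t_0)>0$, the sum $\sum_i q_i(t_0)D_t^{\alpha_i}v(x_0,t_0)$ is $<0$ (or $\le 0$ in the degenerate case where all the $\alpha_i<1$ terms have $q_i(t_0)=0$, i.e.\ only the $\alpha_1=1$ term survives). Adding $\LL v(x_0,t_0)\le -c(x_0)|v(x_0,t_0)|\le 0$ — wait, more carefully: $\LL v(x_0,t_0) = \sum_k\{\cdots\} + c\,v \le c(x_0)v(x_0,t_0)\le 0$ since the second-order-plus-drift part is $\ge 0$ — hmm, that inequality goes the wrong way, so instead just observe the second-order-plus-drift part is $\ge 0$ does not bound $\LL v$ above; rather, write $\LL v(x_0,t_0)=\bigl[\sum_k a_k\partial^2_{x_k}v+b_k\partial_{x_k}v\bigr](x_0,t_0)+c(x_0)v(x_0,t_0)$ where the bracket is $\ge 0$, so this does not give $\LL v\le 0$. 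The correct route is to handle the temporal term strictly: for $\alpha_i<1$ the term is strictly negative whenever $q_i(t_0)>0$, so $\sum_i q_i(t_0)D_t^{\alpha_i}v(x_0,t_0)+\LL v(x_0,t_0) < 0 + (\text{something}\ge 0)$ — this still does not close. The resolution, exactly as in Lemma~\ref{lem:v0w0}, is that $\LL v$ evaluated at a strict interior minimum with the elliptic structure gives $\sum_k a_k\partial^2_{x_k}v(x_0,t_0)+b_k\partial_{x_k}v(x_0,t_0)\ge 0$ and $c(x_0)v(x_0,t_0)\le 0$, so $\LL v(x_0,t_0)$ could be of either sign; but the \emph{temporal} part is $<0$, and we need the whole left side $<0$. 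This forces us to also control the bracket: the standard device is the perturbation $v_\varepsilon(x,t):=v(x,t)+\varepsilon(1+t)$, whose minimum point $(x_\varepsilon,t_\varepsilon)$ one then lets $\varepsilon\to 0$, or more simply to combine the strict negativity of the temporal term with the fact that at the minimum the bracket is $\ge 0$ and $cv\le 0$ to conclude only that $\LL v(x_0,t_0)+\text{temporal}<0$ requires the temporal strict-negativity to dominate — which it does once we note the bracket contributes $0$ at a nondegenerate minimum is false in general. The cleanest fix, which I would adopt, is to reduce to the scalar lemma: set $w(t):=v(x_0,t)$, note $(D_t^{\bar\alpha}w)(t_0) = \sum_i q_i(t_0)D_t^{\alpha_i}v(x_0,t_0)$ and that $\LL v(x_0,t_0)\ge c(x_0)v(x_0,t_0)\ge \lambda v(x_0,t_0)$ is \emph{false}; instead $\LL v(x_0,t_0)\ge$ (bracket $\ge 0$) $+\,c(x_0)v(x_0,t_0)\ge c(x_0)v(x_0,t_0)$. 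Since $v(x_0,t_0)<0$ this says $\LL v(x_0,t_0)\ge c(x_0)v(x_0,t_0)$, a \emph{lower} bound that is negative — not helpful directly. So in fact one must get an \emph{upper} bound on $\LL v$: at the interior minimum the bracket is $\ge 0$, which only lower-bounds it. The genuine argument, as in \cite{KopMaxPrin}, is the exponential substitution $\hat v(x,t):=e^{-\gamma t}v(x,t)$ together with the observation that it suffices to derive a contradiction from the strict negativity of even a single temporal term; I would follow that reference's method and treat the degenerate case ($\alpha_1=1$, all other $q_i(t_0)=0$) by the change of variable $\hat v=e^{-\mu t}v$ as flagged in the proof of Lemma~\ref{lem:v0w0}.

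In summary, the skeleton is: (1) assume failure, (2) locate an interior space-time minimizer $(x_0,t_0)$ with negative value — possibly after an $\varepsilon$-perturbation to ensure attainment and to make the drift/zeroth-order contributions harmless — (3) at that point the elliptic structure forces $\sum_k a_k\partial^2_{x_k}v+b_k\partial_{x_k}v\ge 0$ and $c\,v\le 0$, (4) fixing $x_0$ and applying the scalar analysis of Lemma~\ref{lem:v0w0} via the reformulation \eqref{altCaputo} forces every $D_t^{\alpha_i}v(x_0,t_0)<0$ (resp.\ $v_t(x_0,t_0)\le 0$ when $\alpha_1=1$), hence $\sum_i q_i(t_0)D_t^{\alpha_i}v(x_0,t_0)\le 0$ with strict inequality unless all surviving terms are the $\alpha_1=1$ term, (5) combine to contradict \eqref{comp}, handling the degenerate case by the substitution $\hat v=e^{-\mu t}v$. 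The main obstacle is step (2)–(3): ensuring the minimum is genuinely attained at an interior point (the perturbation argument) and, in the degenerate case where only $\partial_t v$ survives, upgrading $\partial_t v(x_0,t_0)\le 0$ to a strict contradiction via the exponential change of variable — exactly the subtlety already acknowledged parenthetically at the end of the proof of Lemma~\ref{lem:v0w0}, now carried out in the presence of the spatial operator.
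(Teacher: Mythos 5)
Your overall strategy is the paper's: assume the conclusion fails, locate an interior space--time point $(x_0,t_0)$ where $v$ attains a negative minimum, show via \eqref{altCaputo} (exactly as in Lemma~\ref{lem:v0w0} and Corollary~\ref{cor:wnotcont}) that each $D_t^{\alpha_i}v(x_0,t_0)<0$ (resp.\ $v_t(x_0,t_0)\le 0$ when $\alpha_1=1$, with the degenerate case handled by the substitution $e^{-\mu t}v$), and add the spatial contribution to contradict \eqref{comp}. But your write-up never actually closes the argument, and you say so yourself ("this still does not close", "not helpful directly"). The one ingredient the paper's proof adds to the scalar argument is precisely the step you cannot supply: that $\LL v(x_0,t_0)\le 0$ at an interior negative minimum.

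The reason you get stuck is that you read the principal part of $\LL$ as $+\sum_k a_k\,\pt^2_{x_k}v$ with $a_k>0$, which at a minimum is $\ge 0$ and pushes the left-hand side of \eqref{comp} the wrong way. Taken literally, that convention would make the lemma false (with $\ell=1$, $\alpha_1=1$, $b_k=c=0$ it would assert a comparison principle for the backward heat operator $\pt_t+\pt_x^2$), so it cannot be the intended reading: the coercivity hypothesis $\langle\LL r,r\rangle\ge\lambda\|r\|^2$ of Theorem~\ref{the_L2} and the choice $\LL=-\frac{d^2}{dx^2}$ in Example~\ref{exa:4} show that the second-order part of $\LL$ is meant to carry a minus sign, i.e.\ $\LL$ is elliptic in the sense that $-\sum_k a_k\,\pt^2_{x_k}$ has $a_k>0$. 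With that sign, at the interior minimum one has $\pt_{x_k}v(x_0,t_0)=0$ and $\pt^2_{x_k}v(x_0,t_0)\ge 0$, so the second-order term is $\le 0$, the drift term vanishes, and $c(x_0)v(x_0,t_0)\le 0$ since $c\ge 0$ and $v(x_0,t_0)<0$; hence $\LL v(x_0,t_0)\le 0$, and adding $\sum_i q_i(t_0)D_t^{\alpha_i}v(x_0,t_0)<0$ contradicts \eqref{comp} immediately. None of the devices you float instead --- the perturbation $v+\varepsilon(1+t)$, or the exponential substitution --- can repair the argument if the elliptic part genuinely contributes a nonnegative quantity of uncontrolled size; the exponential substitution is needed only for the degenerate case where the temporal part reduces to $\pt_t v$ alone, not to fix the sign of $\LL v$. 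Once the sign of the principal part is sorted out, your steps (1), (2), (4), (5) assemble into the paper's proof.
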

\begin{proof}
Imitate the argument of Corollary~\ref{cor:wnotcont}, with the extra detail that $\LL v(x_0,t_0)\le 0$ at any point $(x_0, t_0)\in \Omega\times (0,T]$ where $v(x,t)$ attains a negative minimum.
\end{proof}

A result similar to Lemma~\ref{lem_max_pr} is proved in \cite[Theorem~2]{Luch11} under the stronger hypothesis  that $v(x,\cdot)\in C^1(0,T]\cap W^{1,1}(0,T)$ for each $x\in\Omega$. See also \cite[Lemma 3.1]{BHY15}.

\begin{theorem}
Assume that a unique solution $u$ of \eqref{problem} and its approximation $u_h$ each satisfy the regularity hypotheses imposed on $v$ in Lemma~\ref{lem_max_pr}.
Then the error bounds of Theorem~\ref{the_L2} 
and Corollaries~\ref{cor1_L2}, \ref{cor:L2boundEE0} and~\ref{cor:L2boundEE1} remain true with $\|\cdot\|=\|\cdot\|_{L_2(\Omega)}$
replaced by $\|\cdot\|_{L_\infty(\Omega)}$.
\end{theorem}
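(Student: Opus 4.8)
The plan is to observe that the entire $L_2(\Omega)$ theory of Section~\ref{sec:L2apost} rests on exactly two ingredients: first, the ``energy-type'' inequality of Lemma~\ref{lem_aux}, which converts the vector equation $(D_t^{\bar\alpha}+\LL)e = R_h$ into the scalar inequality \eqref{e_eq} for $\|e(\cdot,t)\|$; and second, the scalar comparison principle Lemma~\ref{lem:v0w0} (together with its extension Corollary~\ref{cor:wnotcont}), which is applied to the barrier functions $\EE$, $\EE_0$, $\EE_1$ and to $(D_t^{\bar\alpha}+\lambda)^{-1}\|R_h\|$. Everything downstream --- Theorem~\ref{the_L2} and Corollaries~\ref{cor1_L2}, \ref{cor:L2boundEE0}, \ref{cor:L2boundEE1} --- is purely scalar manipulation built on these two facts. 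So the proof reduces to showing that both ingredients have $L_\infty(\Omega)$ analogues under the stated hypotheses, after which the four statements carry over verbatim with $\|\cdot\|$ reinterpreted as $\|\cdot\|_{L_\infty(\Omega)}$.

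First I would replace the role of Lemma~\ref{lem_aux} by the following pointwise-in-$x$ argument for the error $e = u_h - u$. Fix $t>0$ and let $x_0\in\bar\Omega$ be a point where $|e(\cdot,t)|$ attains its maximum over $\bar\Omega$; by the boundary condition $e=0$ on $\pt\Omega$ we may assume $x_0\in\Omega$ (the boundary case being trivial). Suppose $e(x_0,t) = \|e(\cdot,t)\|_{L_\infty(\Omega)} =: m(t) > 0$ (the case $e(x_0,t) = -m(t)$ is symmetric, working with $-e$). Since $e(x_0,\cdot) - m(\cdot) \le 0$ with equality at $t$ and $e(x_0,0)=m(0)=0$, the same monotonicity argument used in the proof of Lemma~\ref{lem:v0w0} --- applied via the reformulation \eqref{altCaputo} --- gives $D_t^{\alpha_i}e(x_0,t) \le D_t^{\alpha_i}m(t)$ for $\alpha_i<1$, and likewise for the $\alpha_1=1$ term using left-continuity of $\pt_t e$; meanwhile $\LL e(x_0,t) = \sum_k a_k \pt_{x_k}^2 e + \sum_k b_k\pt_{x_k}e + c\,e$ at the interior maximum $x_0$ satisfies $\pt_{x_k}^2 e(x_0,t)\le 0$, $\pt_{x_k}e(x_0,t)=0$, and $c(x_0)e(x_0,t) \le \lambda^{-}\!\cdot\!(\cdots)$ --- actually here the cleaner route is just to note $\LL e(x_0,t) \le c(x_0) m(t)$, but since we only need the comparison with the barrier and $c\ge\lambda\ge0$, I would instead directly establish the scalar differential inequality $(D_t^{\bar\alpha}+\lambda)m(t) \le \|R_h(\cdot,t)\|_{L_\infty(\Omega)}$ by combining these estimates, exactly mirroring \eqref{e_eq}. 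This is the $L_\infty$ substitute for the step that in $L_2$ used Lemma~\ref{lem_aux}.

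Second, the scalar comparison principle Lemma~\ref{lem:v0w0} and Corollary~\ref{cor:wnotcont} are already stated for scalar functions $w(t)$ and make no reference to the $L_2$ structure, so they apply unchanged. Thus, with $m(t) := \|e(\cdot,t)\|_{L_\infty(\Omega)}$ now playing the role that $\|e(\cdot,t)\|_{L_2(\Omega)}$ played before, the inequality $(D_t^{\bar\alpha}+\lambda)(\EE - m(t)) \ge 0$ together with $\EE(0)\ge m(0)=0$ and Lemma~\ref{lem:v0w0} (or Corollary~\ref{cor:wnotcont} when $\EE$ is discontinuous at $0$) yields $m(t)\le\EE(t)$, which is the $L_\infty$ version of Theorem~\ref{the_L2}. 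The three corollaries then follow by the \emph{same} scalar computations with the barrier functions $\EE_0$, $\EE_1$, and $(D_t^{\bar\alpha}+\lambda)^{-1}\|R_h(\cdot,t)\|_{L_\infty(\Omega)}$ as in Section~\ref{sec:L2apost}: nothing in those proofs touches the spatial norm beyond its appearance in $\RR_0$, $\RR_1$, and the residual, all of which simply acquire the subscript $L_\infty(\Omega)$.

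I expect the main obstacle to be purely one of careful bookkeeping rather than genuine difficulty: verifying that the regularity hypotheses ``imposed on $v$ in Lemma~\ref{lem_max_pr}'' are exactly what is needed to run the pointwise-maximum argument on $e = u_h - u$, in particular that for each fixed $x$ the function $t\mapsto e(x,t)$ satisfies the $W^{1,\infty}(\epsilon,t)$/left-continuity conditions so that \eqref{altCaputo} is legitimately applicable and the sign deductions $D_t^{\alpha_i}e(x_0,t)\le D_t^{\alpha_i}m(t)$ are valid, and that the max function $m(t)$ inherits the regularity \eqref{ycondition} needed to invoke Lemma~\ref{lem:v0w0} --- this last point, that $t\mapsto\|e(\cdot,t)\|_{L_\infty(\Omega)}$ is Lipschitz on $[\epsilon,t]$ and has a limit as $t\to 0^+$, follows from the corresponding property of $e$ since $\bigl|\,\|e(\cdot,t)\|_\infty - \|e(\cdot,s)\|_\infty\,\bigr| \le \|e(\cdot,t)-e(\cdot,s)\|_\infty$. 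A secondary subtlety worth a sentence is the exceptional case $q_i(t_0)=0$ for $i\ge 2$ and $\alpha_1=1$, handled exactly as in Lemma~\ref{lem:v0w0} by the substitution $\hat w = e^{-\mu t}w$. Since all of this is a direct transcription of arguments already carried out in the paper, the proof can reasonably be given in a few lines by pointing to Lemma~\ref{lem_max_pr} and the proofs in Section~\ref{sec:L2apost}.
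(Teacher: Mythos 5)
Your route is genuinely different from the paper's, and it is worth contrasting the two. The paper does \emph{not} reduce to a scalar inequality for $\|e(\cdot,t)\|_{L_\infty(\Omega)}$ at all: it applies the initial-boundary-value comparison principle, Lemma~\ref{lem_max_pr}, directly to the space-time functions $\EE(t)\pm e(x,t)$. Since $\EE$ is independent of $x$, one has $\LL\EE=c\,\EE\ge\lambda\EE$, hence $(D_t^{\bar\alpha}+\LL)(\EE\pm e)\ge(D_t^{\bar\alpha}+\lambda)\EE-\vert R_h\vert\ge 0$, and $\EE\pm e\ge0$ at $t=0$ and on $\pt\Omega$, so Lemma~\ref{lem_max_pr} gives $\vert e(x,t)\vert\le\EE(t)$ in one step; the corollaries then follow exactly as you say. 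This is why Lemma~\ref{lem_max_pr} was proved in that section in the first place. Your alternative --- deriving $(D_t^{\bar\alpha}+\lambda)m(t)\le\|R_h(\cdot,t)\|_{L_\infty(\Omega)}$ for $m(t):=\|e(\cdot,t)\|_{L_\infty(\Omega)}$ at the spatial maximum point and then invoking the scalar Lemma~\ref{lem:v0w0} --- is a legitimate $L_\infty$ analogue of the $L_2$ pipeline via \eqref{e_eq}, but it forces you to worry about the regularity of $t\mapsto m(t)$, the attainment of the supremum, and the differentiation through a moving maximum point, none of which the paper needs.

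Two directions in your sketch are stated backwards and must be repaired for the argument to close. From \eqref{altCaputo}, using $m(t)=e(x_0,t)$, $m(0)=e(x_0,0)=0$ and $m(s)\ge e(x_0,s)$ for $s<t$, the integrand comparison gives $m(t)-m(s)\le e(x_0,t)-e(x_0,s)$ and hence
\[
D_t^{\alpha_i}m(t)\;\le\;D_t^{\alpha_i}e(x_0,t),
\]
i.e.\ the \emph{opposite} of the inequality you wrote; only this direction lets you pass from the PDE to an upper bound on $D_t^{\bar\alpha}m(t)$. Likewise, what you need from the elliptic part at the interior maximum is a \emph{lower} bound $\LL e(x_0,t)\ge c(x_0)\,m(t)\ge\lambda m(t)$ (with the sign convention under which $\LL$ is elliptic in the sense used in the proof of Lemma~\ref{lem_max_pr}, cf.\ $\LL=-\tfrac{d^2}{dx^2}$ in Example~\ref{exa:4}), not the upper bound $\LL e(x_0,t)\le c(x_0)m(t)$ that you state. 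With these two corrections the chain $D_t^{\bar\alpha}m(t)\le D_t^{\bar\alpha}e(x_0,t)=R_h(x_0,t)-\LL e(x_0,t)\le\|R_h(\cdot,t)\|_{L_\infty(\Omega)}-\lambda m(t)$ does yield the scalar inequality you want, and the rest of your argument goes through; but as written the displayed inequalities point the wrong way and would not let you conclude.
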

\begin{proof}
Note that $\LL\EE(t) = c\EE(t) \ge \lambda \EE(t)$ for~$t>0$.
Consider first Theorem~\ref{the_L2}, whose hypothesis now becomes $\|R_h(\cdot,t)\|_{L_\infty(\Omega)}\le (D_t^{\bar\alpha}+\lambda)\EE(t)$ for~$t>0$. But $(D_t^{\bar\alpha}+\lambda)\EE(t)\le (D_t^{\bar\alpha}+\LL)\EE(t)$ and $R_h(x,t) = (D_t^{\bar\alpha}+\LL)(u_h-u)(x,t)$, so we have $\vert(D_t^{\bar\alpha}+\LL)(u_h-u)(x,t)\vert\le (D_t^{\bar\alpha}+\LL)\EE(t)$ for $x\in\Omega$ and $t>0$. Thus one can invoke Lemma~\ref{lem_max_pr} to get  $\vert(u_h-u)(x,t)\vert\le \EE(t)$ on $\Omega\times[0,T]$, i.e., Theorem~\ref{the_L2} is valid in the $L_\infty(\Omega)$ setting.

We can now deduce  $L_\infty(\Omega)$ variants of the other results.
To get the new Corollary~\ref{cor1_L2}, take $\EE(t):=(D_t^\alpha+\lambda)^{-1}\|R_h(\cdot,t)\|_{L_\infty(\Omega)}$ in the new Theorem~\ref{the_L2}. For the new Corollaries~\ref{cor:L2boundEE0} and~\ref{cor:L2boundEE1}, use their old proofs with $\|R_h\|$ replaced by $\|R_h\|_{L_\infty(\Omega)}$ and appeal to the new Theorem~\ref{the_L2}.
\end{proof}

\section{Application to the L1 method. Numerical experiments}\label{sec:L1method}

In this section we examine in detail the practical application of our a posteriori analysis to the well-known L1 discretisation of each fractional derivative  $ D _t ^{\alpha_i}$. Other discretisations will be discussed in a future paper~\cite{FraKopInPrep}.

Given an arbitrary temporal mesh $\{t_j\}_{j=0}^M$ on $[0,T]$, let $\{u_h^j\}_{j=0}^M$ be the semi-discrete approximation for \eqref{problem} obtained using the
popular L1 method \cite{StyL1survey}.
Then its standard Lagrange piecewise-linear-in-time interpolant $u_h$, defined on $\bar\Omega\times[0,T]$,
 satisfies\vspace{-0.1cm}
\beq\label{L1method}
\Bigl(\sum_{i=1}^{\ell}q_i(t_j)\, D _t ^{\alpha_i}+\LL\Bigr)u_h(x, t_j)=f(x, t_j)\qquad \mbox{for}\;\; x\in\Omega,\;\;  j=1\ldots, M,\vspace{-0.1cm}
\eeq
subject to $u_h^0:=u_0$ and $u_h=0$ on $\pt \Omega$.
In the case of $\alpha_1=1$, the term $D _t ^{\alpha_1}u_h(x, t_j)=[u_h(x, t_j)-u_h(x, t_{j-1})]/(t_j-t_{j-1})$, which corresponds to $\pt_t u_h$ treated as a left-continuous function in time.

First, consider the case $\alpha_1<1$.
For
the residual of $u_h$ one immediately gets $R_h(\cdot, t_j)=0$ for $j\ge 1$,
i.e., the residual is a non-symmetric bubble on each $(t_{j-1},t_j)$ for $j>1$.
Hence, for the piecewise-linear interpolant $R_h^I$ of  $R_h$ one has $R_h^I=0$ for $t\ge t_1$,
and, more generally,
$R_h^I=[\LL u_0-f(\cdot, 0)](1-t/t_1)^+$ for $t>0$
(where we used $R_h(\cdot,0)=\LL u_0-f(\cdot,0)$ because  $D_t^{\alpha_i} u_h^0(\cdot, 0)=0$).
Finally, note that  $R_h-R_h^I=(D_t^{\bar\alpha} u_h-f)-(D_t^{\bar\alpha} u_h-f)^I$
since $(\LL u_h)^I
=\LL u_h$.
%
In other words, one can compute $R_h$ by sampling, using parallel/vector evaluations, without a direct application of $\LL$ to $\{u^j_h\}$.%


Next, consider the case $\alpha_1=1$. Then $D _t ^{\alpha_1}u_h=\pt_t u_h$ is piecewise constant in time, and it is convenient to treat it as a left-continuous function, viz., $\pt_t u_h=\delta_t^ju_h:=[u_h(\cdot, t_j)-u_h(\cdot, t_{j-1})]/(t_j-t_{j-1})$ is constant in time
on each time interval $(t_{j-1},t_j]$.
As before, one gets $R_h(\cdot, t_j)=0$ for $j\ge 1$, so
$R_h^I=[\LL u_0-f(\cdot, 0)](1-t/t_1)^+$ for $t>0$ --- but $R_h$ is no longer continuous in time. (To be precise,
$R_h-q_1(t)\, \pt_t u_h$ is continuous on $[0,T]$, assuming that $u_0$ is smooth; a modification for the case when $\LL u_0\notin L_2(\Omega)$ is discussed in \cite[Remark 2.7]{ KopAML22}.)
Nevertheless,
we can still employ $R_h-R_h^I=(D_t^{\bar\alpha} u_h-f)-(D_t^{\bar\alpha} u_h-f)^I$,
but one needs to be more careful when evaluating the component $(q_1 \pt_t u_h)^I$
of $(D_t^{\bar\alpha})^I$: on each $(t_{j-1},t_j]$ with $j>1$, one gets
$$
(q_1 \pt_t u_h)^I=q_1^I\,\delta_t^j u_h-q_1(t_{j-1})\,\left[\delta_t^{j} u_h-\delta_t^{j-1} u_h\right]
	\frac{t_j-t}{t_j-t_{j-1}}
$$
(to check this formula, observe that it is linear in time and equals $q_1(t_j)\,\delta_t^j u_h $ at $t_j$ and $q_1(t_{j-1})\,\delta_t^{j-1} u_h $ at $t_{j-1}$).
On $[0,t_1]$, i.e., when $j=1$, the situation is simpler as $u_h$ is continuous in time, so
$(q_1\, \pt_t u_h)^I=q_1^I\,\delta_t^1 u_h$, so one can still employ the above formula after setting $\delta_t^{0} u_h:=\delta_t^{1} u_h$.
Thus, even when $\alpha_1=1$,
 one can still compute $R_h$ by sampling, using parallel/vector evaluations, without a direct application of $\LL$ to $\{u^j_h\}$.

Finally, for completeness we include in Figure~\ref{fig_alg}  a description of the
adaptive algorithm of \cite{KopAML22}, to aid the reader's understanding of the numerical results that follow. This algorithm is motivated by \eqref{R0bound} and \eqref{R1bound}; it constructs a temporal mesh such that $\|R_h(\cdot,t)\|\le \TOL\cdot\RR_p(t)$ for $p=0,1$, with $Q:=1.1, \ \tau_{**}:=0$ and $\tau_*:=5t_1$ in~$\RR_1$. (Experiments with larger values of $Q$ and a discussion of implementation of the algorithm are given in \cite{FraKopInPrep}.)    Note that the computation of the mesh in the algorithm is one-dimensional in nature and is independent of the number of spatial dimensions in \eqref{problem}, since it is based on the scalar quantity $\|R_h(\cdot,t)\|$.

  \begin{figure}[t!]
 \begin{center}
 ~\hfill\noindent
     \parbox{0.7\textwidth}{%

     \noindent\hrulefill

         $u_h^0:=u_0$;\;\;$t_0:=0$;\;\;$t_1:=\min\{\tau_{*},\,T\}$;\;\;$m:=0$;

         {\tt while}\;\;$t_m<T$

         \ind $m:=m+1$;\;\;$flag :=0$;

         \ind {\tt while}\;\;$t_{m}-t_{m-1}>\tau_{**}$

         \ind\ind compute $u_h^m$ using \eqref{L1method}

         \ind\ind {\tt if}\;\;$\|R_h(\cdot,t)\|\le\TOL\cdot \RR_p(t)$ $\forall\,t\in(t_{m-1},t_m)$

         \ind\ind\ind {\tt if}\;\;$t_m=T$

         \ind\ind\ind\ind $M:=m$;\;\;{\tt break}

         \ind\ind\ind {\tt elseif}\;\;$t_m<T$

         \ind\ind\ind\ind  $\tilde u_h^m := u_h^m$;\;\;$\tilde t_m := t_m$;

         \ind\ind\ind\ind
         $t_m:=\min\{t_{m-1}+Q(t_m-t_{m-1}) ,\,T\}$;\;\;$flag: =1$;

         \ind\ind\ind {\tt end}

         \ind\ind {\tt else}

         \ind\ind\ind {\tt if}\;\;$flag =0$

         \ind\ind\ind\ind $t_m:=t_{m-1}+(t_m-t_{m-1})/Q$;

         \ind\ind\ind {\tt else}

         \ind\ind\ind\ind $u_h^m := \tilde u_h^m$;\;\;$t_m := \tilde t_m$;

         \ind\ind\ind\ind
         $t_{m+1}:=\min\{t_m+(t_m-t_{m-1}),\, T\}$;\;\;{\tt break}

         \ind\ind\ind {\tt end}

         \ind\ind {\tt end}

         \ind {\tt end}

         {\tt end}

         \noindent\hrulefill
 }\hfill~%
 %
 \end{center}
 \vspace{-0.0cm}
  \caption{\label{fig_alg}\it\small
  Adaptive algorithm}
 \end{figure}

\subsection{Numerical results with $\alpha_1<1$}

We start our numerical experiments with three \tcb{initial-value problems of the form~\eqref{IVP} to illustrate orders of convergence, since time discretisation is the main focus of our paper. A  subdiffusion  test problem of the form \eqref{problem} (i.e., containing spatial and temporal derivatives)} will then be considered.

As well as results computed  on our adaptive mesh, some of the figures \tcb{compare the adaptive mesh itself with} the $(M+1)$-point \emph{graded  mesh} $t_k := T(k/M)^r$ for $k=0,1,\dots,M$ that is often used in conjunction with the L1 scheme (see \cite{StyL1survey}). Here $r\ge 1$ is a user-chosen mesh grading parameter and it is known
\cite{KopMC19,SORG17} that when $\ell=1$ the choice $r=(2-\alpha)/\alpha$ yields the optimal mesh grading for the problem~\eqref{problem}; we make an analogous choice of $r$ in our experiments. \tcb{We shall see that the adaptive mesh constructed by our algorithm --- without using any information about the exact solution and without any guidance from the user --- is remarkably similar to the optimal graded mesh. Of course this holds great promise for the performance of the algorithm in problems where no a priori analysis of the exact solution (and therefore no optimal a priori mesh) is available.}

\tcb{To begin, we present three initial-value examples to demonstrate that an adaptive approach based on our a posteriori analysis works well in widely-differing regimes.}

\begin{example}\label{exa:1}
\emph{Consider \eqref{problem} without spatial derivatives, with $\LL:=1$, $T=1$, and $\ell=2$, and
\beq\label{testA}
\alpha_1=\alpha,\quad
\alpha_2={\textstyle\frac23}\alpha,\quad
q_1(t)={\textstyle\frac12}e^{-t/5}, \quad q_2(t)=1-q_1(t),\quad
u(0)=0,\quad
f(t)\equiv 1,
\eeq
where $\alpha\in(0,1)$.
\tcb{In this example one has $q_1(t)>0$ and $q_2(t)>0$ for all~$t$.}
The unknown exact solution is replaced by a reference solution (computed on a considerably finer mesh).
See Figures~\ref{fig_TestA_R0} and~\ref{fig_TestA_R1} for errors  in the computed solutions and the meshes generated.  }
\end{example}

  \begin{figure}[h!]
\begin{center}
\hspace*{-0.3cm}\includegraphics[height=0.25\textwidth]{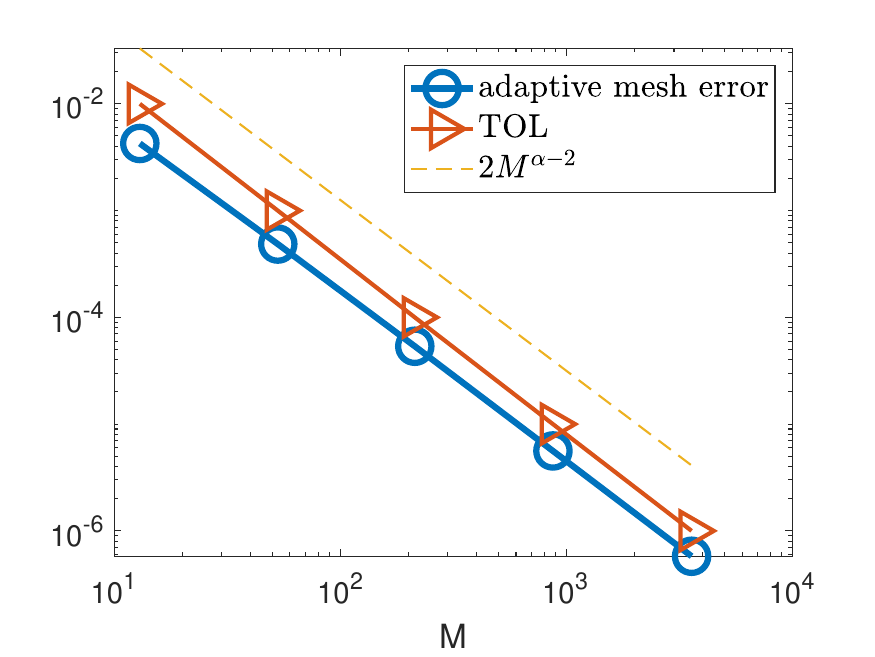}\hspace*{-0.0cm}%
\includegraphics[height=0.25\textwidth]{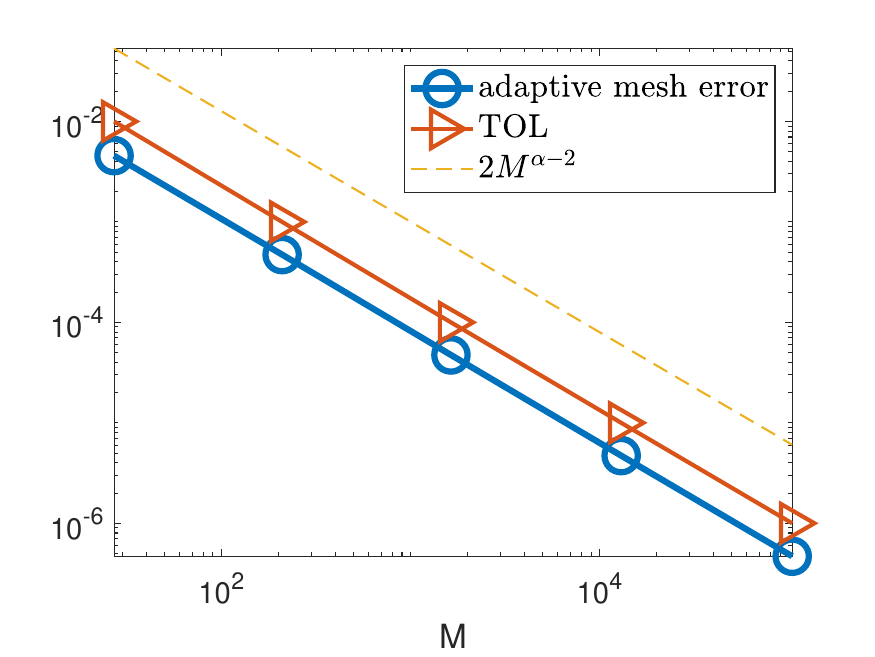}%
\hspace*{-0.0cm}\includegraphics[height=0.25\textwidth]{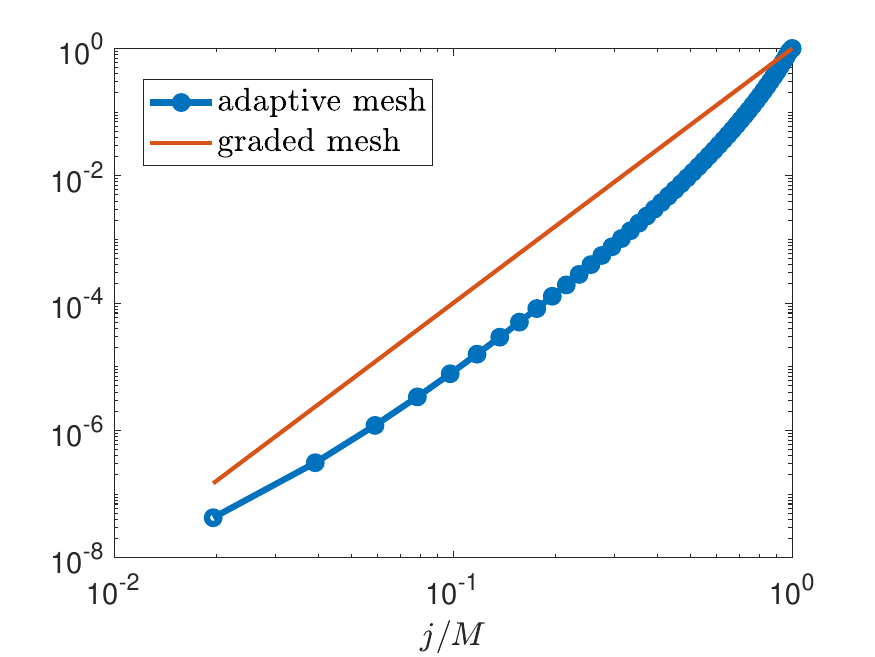}\hspace*{-0.45cm}
\end{center}
 \caption{\label{fig_TestA_R0}\it\small
Adaptive algorithm with $\RR_0(t)$ for Example~\ref{exa:1}: loglog graphs of 
$\max_{[0,T]}\vert e(t)\vert$ on the adaptive mesh and the corresponding $\TOL$, 
for $\alpha=0.4$ (left) and $\alpha=0.9$ (centre).
Right: loglog graphs of $\{t_j\}_{j=0}^M$ as a function of $j/M$ for
our adaptive mesh and the standard graded mesh with $r=(2-\alpha)/\alpha$,
 $\alpha=0.4$, $\TOL =10^{-3}$, $M=51$.}
 \end{figure}

  \begin{figure}[h!]
\begin{center}
\hspace*{-0.3cm}\includegraphics[height=0.25\textwidth]{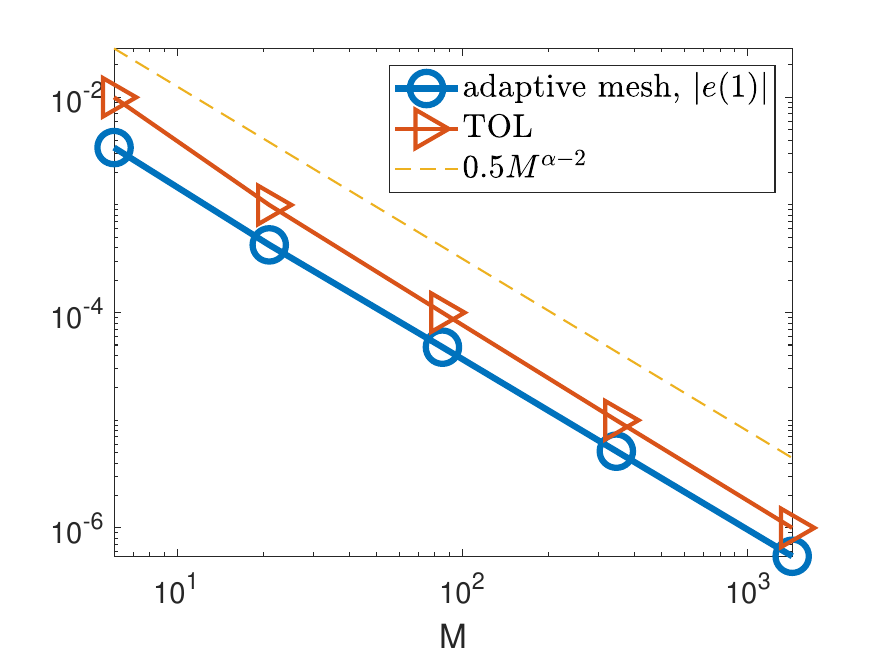}\hspace*{-0.0cm}%
\includegraphics[height=0.25\textwidth]{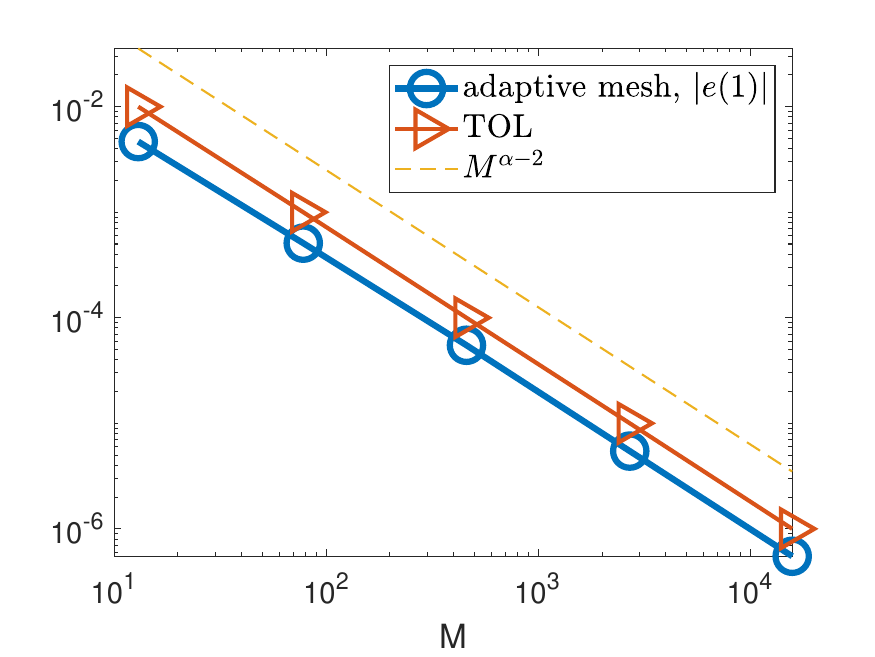}%
\hspace*{-0.0cm}\includegraphics[height=0.25\textwidth]{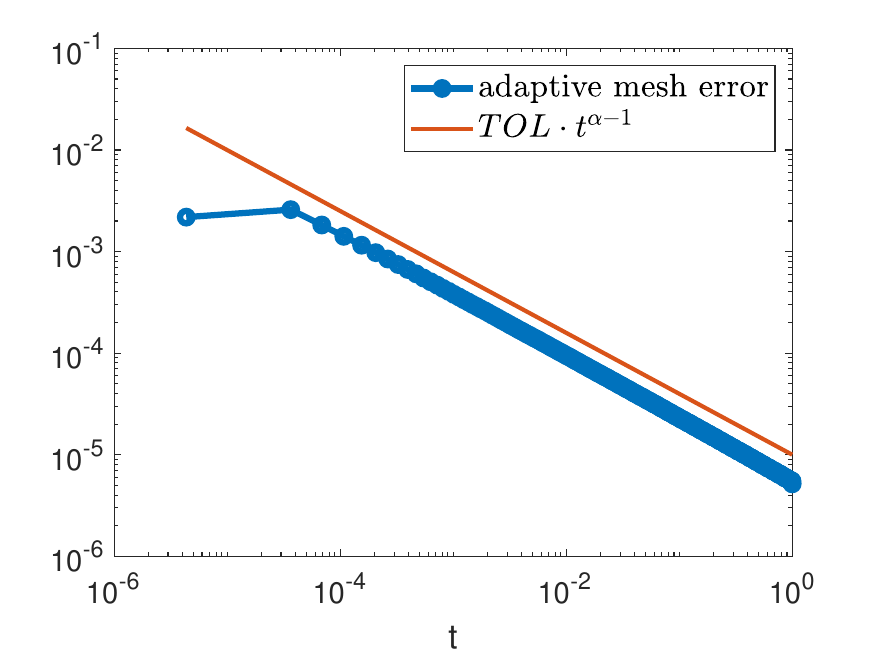}\hspace*{-0.45cm}
\end{center}
 \caption{\label{fig_TestA_R1}\it\small
 Adaptive algorithm with $\RR_1(t)$ for Example~\ref{exa:1}: 
$\vert e(1)\vert$ on the adaptive mesh and the corresponding $\TOL$,
for $\alpha=0.4$ (left) and $\alpha=0.7$ (centre).
Right:
log-log graph of the pointwise error $\vert e(t_j)\vert$ on the adaptive mesh and $\TOL\cdot t^{\alpha-1}$
for $\alpha=0.4$, $\TOL =10^{-5}$, $M=346$.}
 \end{figure}


\begin{example}\label{exa:2}
\emph{We modify Example~\ref{exa:1} by resetting
$$
q_1(t):=\cos^2(\pi t)\quad\mbox{for}\;\;t<{\textstyle\frac12},\qquad
q_1(t):=0\quad\mbox{for}\;\;t\ge{\textstyle\frac12},\qquad q_2(t):=1-q_1(t),
$$
while retaining $u(0)=0$ and $f(t)\equiv1$.
\tcb{Now the coefficient of the highest-order derivative vanishes for $t\ge 1/2$.}
Loglog graphs of reference solutions indicate that the solution to this problem has an initial singularity
of type $t^{\alpha_1}$ (compare the constant-coefficient analysis of Section~\ref{ssec_const}) and remains smooth away from $t=0$. See Figure~\ref{fig_TestB_R0} for errors  in the computed solutions and the mesh generated.  We also display (see rightmost figure) the meshes generated when $f(t)=\cos(5t^2)$ to show that the algorithm continues to perform well when $f$ changes rapidly.  }
\end{example}

  \begin{figure}[h!]
\begin{center}
\hspace*{-0.3cm}\includegraphics[height=0.25\textwidth]{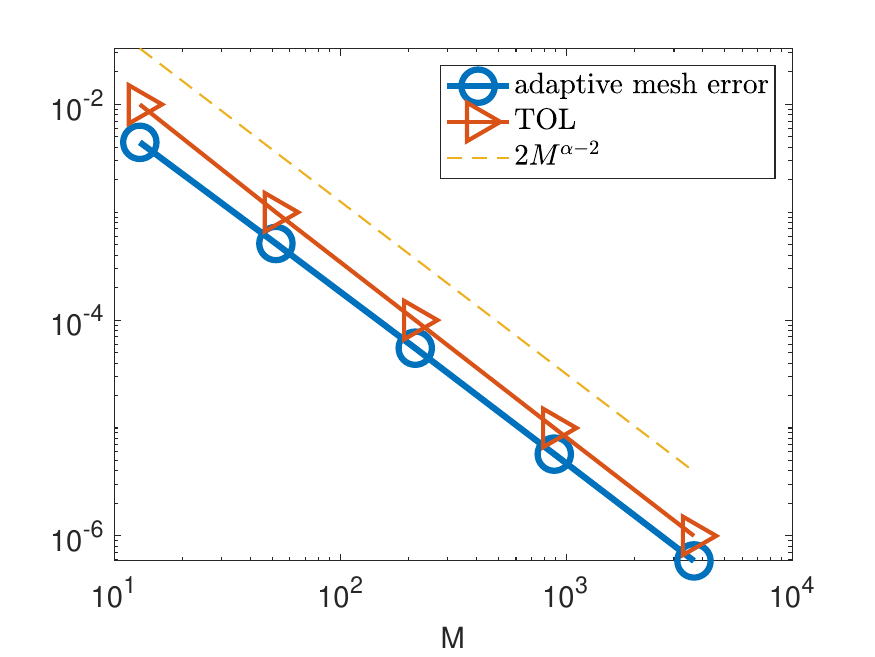}\hspace*{-0.0cm}%
\includegraphics[height=0.25\textwidth]{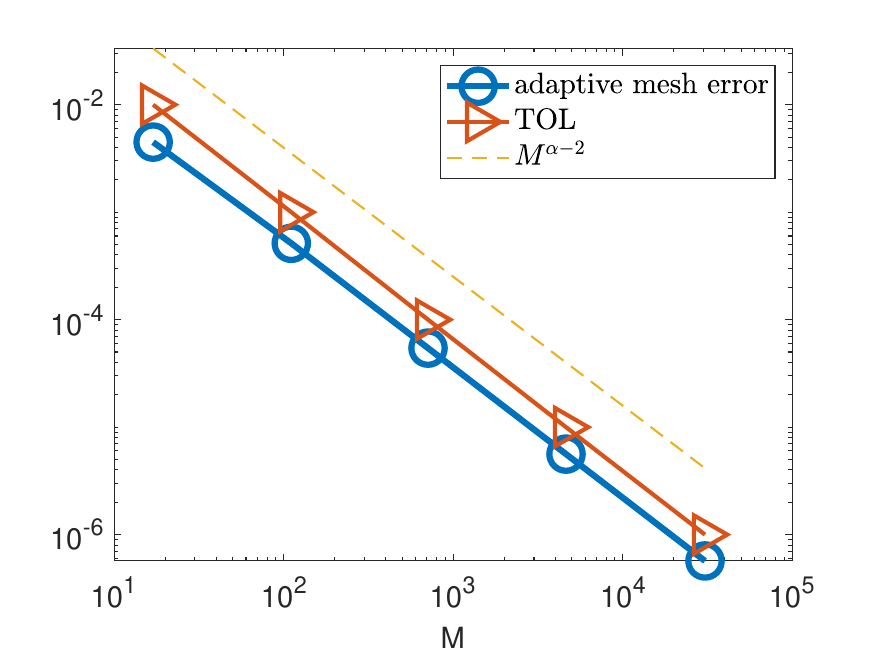}%
\hspace*{-0.0cm}\includegraphics[height=0.25\textwidth]{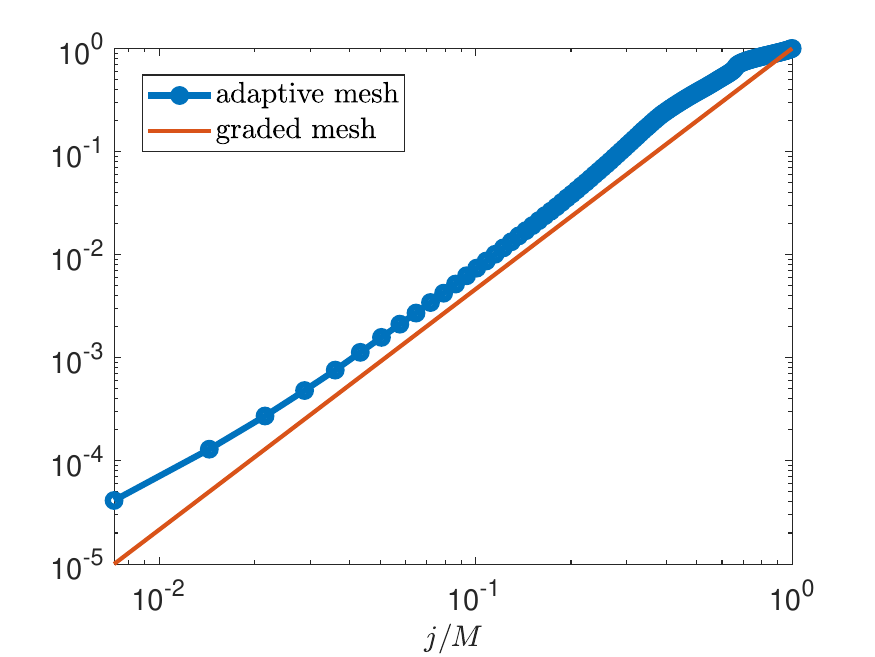}\hspace*{-0.45cm}
\end{center}
 \caption{\label{fig_TestB_R0}\it\small
Adaptive algorithm with $\RR_0(t)$ for Example~\ref{exa:2}: loglog graphs of 
$\max_{[0,T]}\vert e(t)\vert$ on the adaptive mesh and the corresponding $\TOL$ 
for $\alpha=0.4$ (left) and $\alpha=0.8$ (centre).
Right: Change $f$ to $f(t)=\cos(5t^2)$; loglog graphs of $\{t_j\}_{j=0}^M$ as a function of $j/M$ for
our adaptive mesh and the standard graded mesh with $r=(2-\alpha)/\alpha$
for $\alpha=0.6$, $\TOL =10^{-3}$, $M=139$.}
 \end{figure}

\begin{example}\label{exa:3}
\emph{We modify Example~\ref{exa:1} by resetting
$$
q_1(t):=0\quad\mbox{for}\;\;t<{\textstyle\frac12},\qquad
q_1(t):=\cos^2(\pi t)\quad\mbox{for}\;\;t\ge{\textstyle\frac12},\qquad q_2(t):=1-q_1(t).
$$
\tcb{Here the situation is opposite to that of Example~\ref{exa:2}:  the coefficient of the highest-order derivative vanishes for $t< 1/2$.}
Loglog graphs of reference solutions indicate that the solution to this problem has an initial singularity
of type $t^{\alpha_2}$ (one could show this analytically by an extension of Remark~\ref{rem:w'(t)}) and remains smooth away from $t=0$.
See Figure~\ref{fig_TestC_R0} for errors  in the computed solutions and the mesh generated. }
\end{example}

  \begin{figure}[h!]
\begin{center}
\hspace*{-0.3cm}\includegraphics[height=0.25\textwidth]{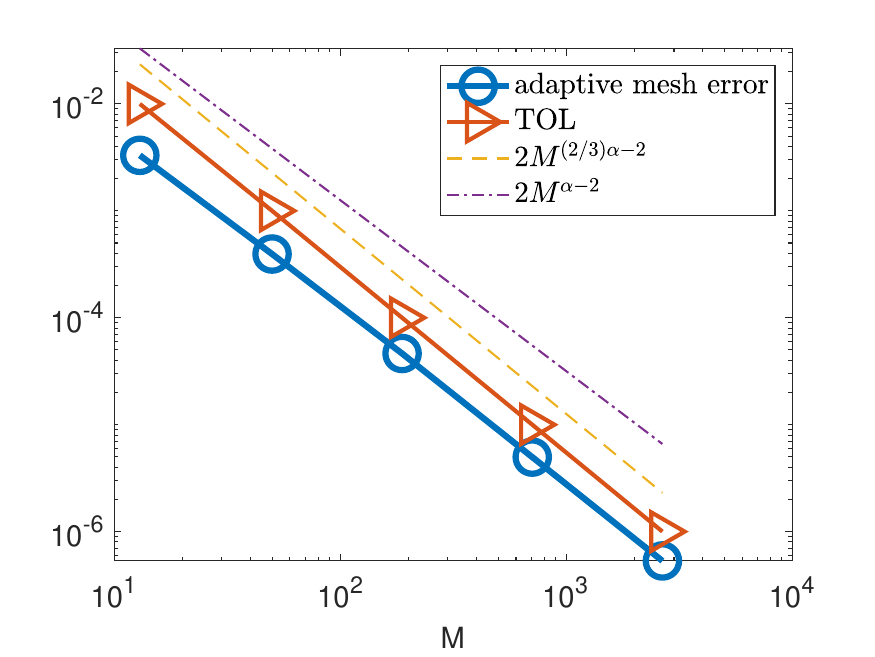}\hspace*{-0.0cm}%
\includegraphics[height=0.25\textwidth]{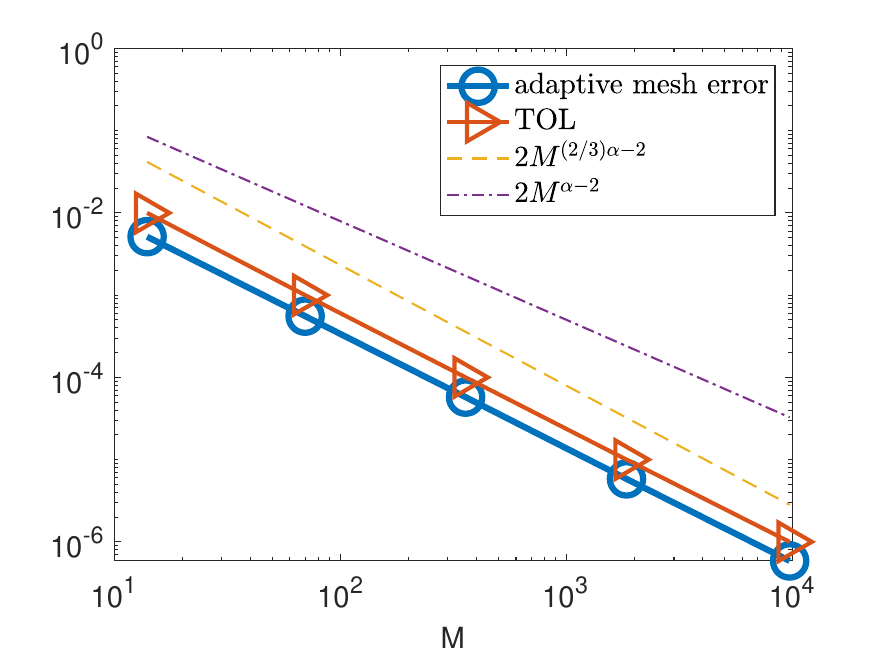}%
\hspace*{-0.0cm}\includegraphics[height=0.25\textwidth]{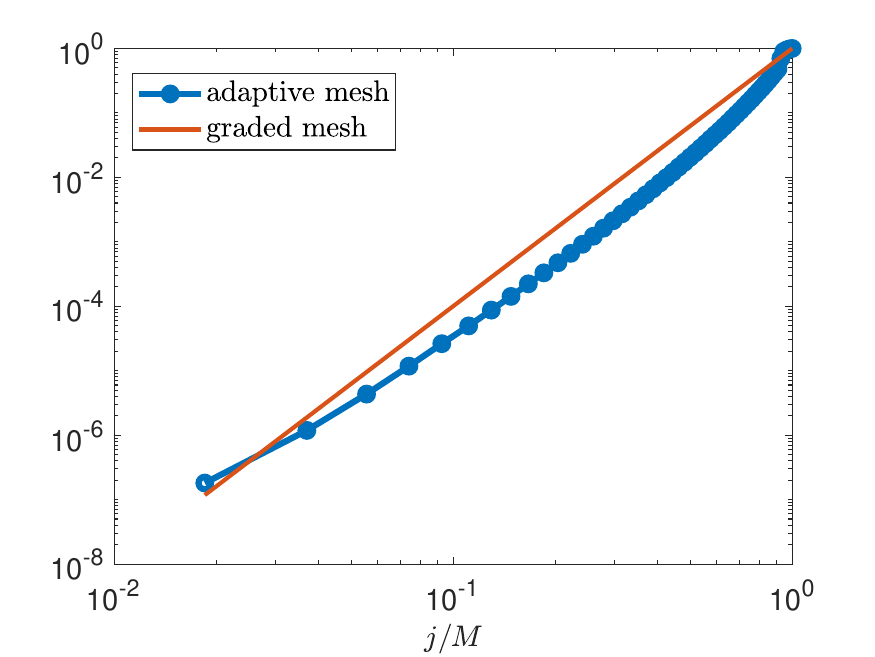}\hspace*{-0.45cm}
\end{center}
 \caption{\label{fig_TestC_R0}\it\small
Adaptive algorithm with $\RR_0(t)$ for Example~\ref{exa:3}: loglog graphs of 
$\max_{[0,T]}\vert e(t)\vert $ on the adaptive mesh and the corresponding $\TOL$ 
for $\alpha=0.4$ (left) and $\alpha=0.8$ (centre).
Right: 
loglog graphs of $\{t_j\}_{j=0}^M$ as a function of $j/M$ for
our adaptive mesh and the standard graded mesh with $r=(2-\alpha_2)/\alpha_2$,
 $\alpha=\alpha_1=0.6$, $\TOL =10^{-3}$, $M=54$.}
 \end{figure}

\begin{example}\label{exa:4}
\emph{Now we consider the subdiffusion analogue \eqref{problem} of \eqref{testA}: retain the values of $\alpha_1, \alpha_2, q_1, q_2$ and set
\[
u_0(x)=\sin(x^2/\pi),\qquad \Omega =(0,\pi),\qquad \lambda=1,\qquad \LL=-{\textstyle\frac{d^2}{dx^2}},
\qquad f\equiv 1.
\]
\tcb{Note that the initial data $u_0$ has only limited compatibility with the other data at the corner $(\pi,0)$ of the space-time domain. Nevertheless the algorithm performs satisfactorily. (Related examples where either the exact solution is known, or the initial condition is piecewise linear, were  tested in~\cite{KopAML22}.)}
See Figure~\ref{fig_parabolic} for errors in the computed solutions. }
\end{example}

   \begin{figure}[h!]
\begin{center}
\hspace*{-0.3cm}\includegraphics[height=0.25\textwidth]{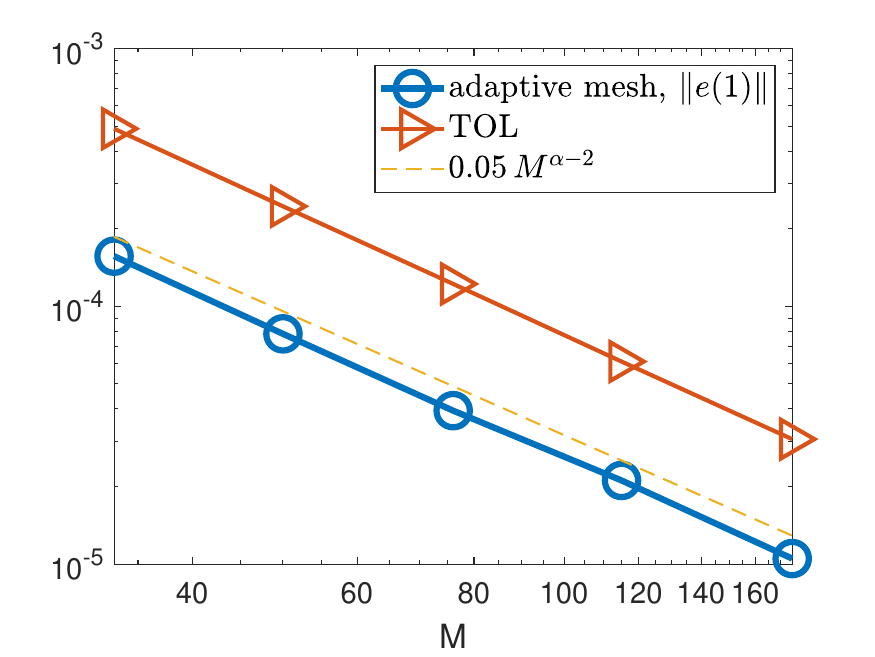}\hspace*{-0.0cm}%
\includegraphics[height=0.25\textwidth]{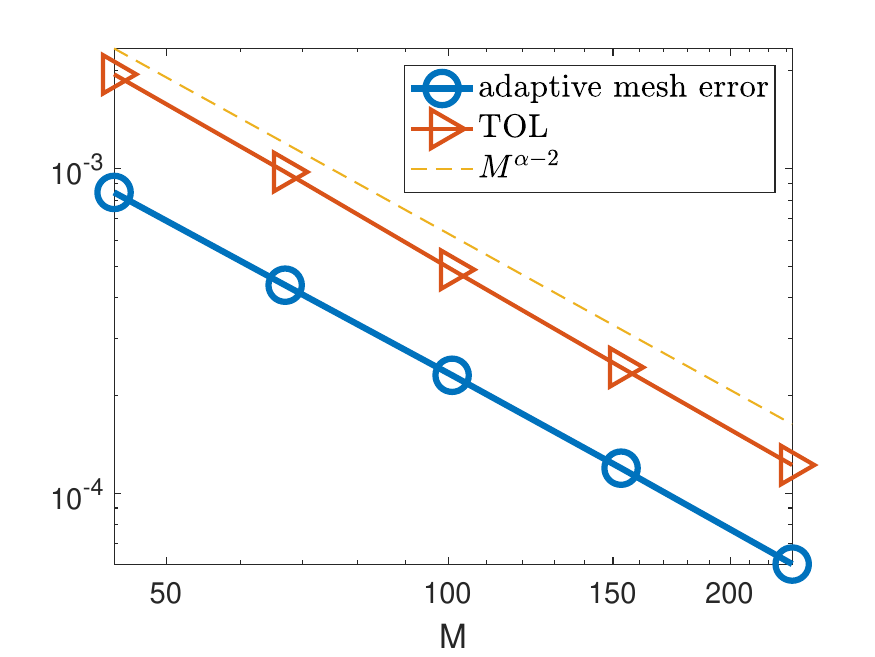}%
\hspace*{-0.0cm}\includegraphics[height=0.25\textwidth]{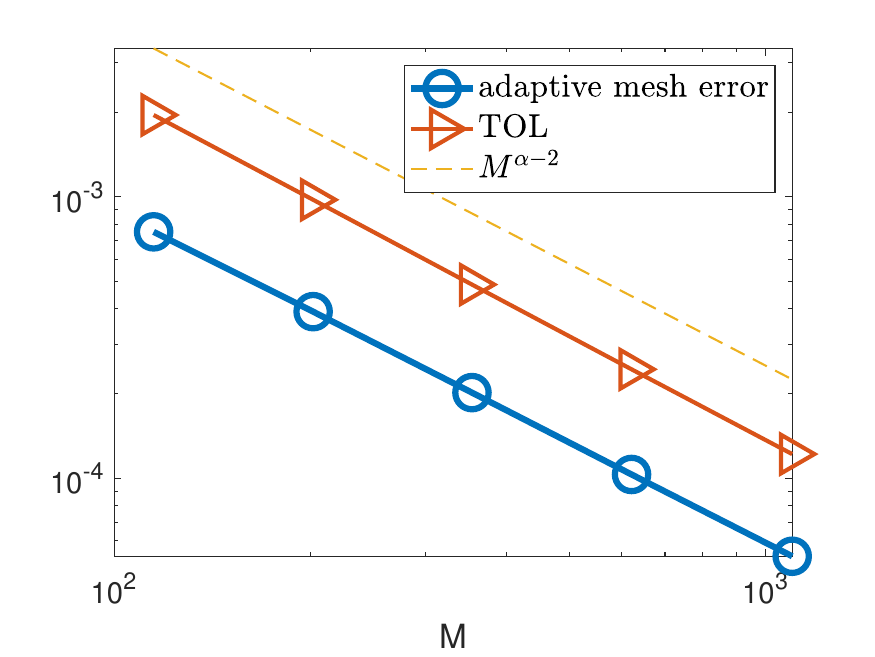}\hspace*{-0.45cm}
\end{center}
 \caption{\label{fig_parabolic}\it\small
Example~\ref{exa:4} adaptive algorithm results: (left) for $\RR_1(t)$ with $\alpha=0.4$, showing $\|e(1)\|$ and  $\TOL$;
for $\RR_0(t)$ with  $\alpha=0.4$ (centre) and $\alpha=0.8$ (right),  $\max_{t_j\in(0,T]}\|e(t_j)\|$ on the adaptive mesh
 and the corresponding $\TOL$.}
 \end{figure}

 \subsection{Numerical results with $\alpha_1=1$}

\begin{example}\label{exa:5}
\emph{Consider the IVP \eqref{IVP} with $\alpha_1=1, \lambda=1$ and
\begin{align*}
&q_1(t):=c_1e^{-5t}\cos^2(\pi t)\;\mbox{for}\;t<{\textstyle\frac12},\quad
	q_1(t):=0\;\mbox{for}\;t\ge{\textstyle\frac12},\quad q_2(t):=1-q_1(t), \\
&f(t):=1+{\textstyle\frac12}{\rm erf}(20(1-t)).
\end{align*}
See Figure~\ref{fig_ODE_alpha1} for results for $c_1=1$ and Figure~\ref{fig_ODE_alpha1_12} for those for $c_1=\frac12$.
When $c_1=1$, the solution has no initial singularity and we used the exponential barrier function $\EE(t):=1-\exp(-10t)$ since it gives better results in this case. For $c_1=\frac12$ one has $q_2(0)>0$, so we employed $\EE_0$ and hence $\RR_0$ as in the earlier examples for $\alpha_1<1$. }

\emph{Note: when evaluating $\RR(t) :=\left( \sum_{i=1}^{\ell} q_i(t) D _t ^{\alpha_i} + \lambda\right) \EE(t)$ in~\eqref{res_bound},
$D_t^{\alpha_1}\EE=\EE'(t)$ is computed explicitly, while $D_t^{\alpha_2}\EE$ is computed using quadrature. }
\end{example}

   \begin{figure}[h!]
\begin{center}
\hspace*{-0.3cm}\includegraphics[height=0.25\textwidth]{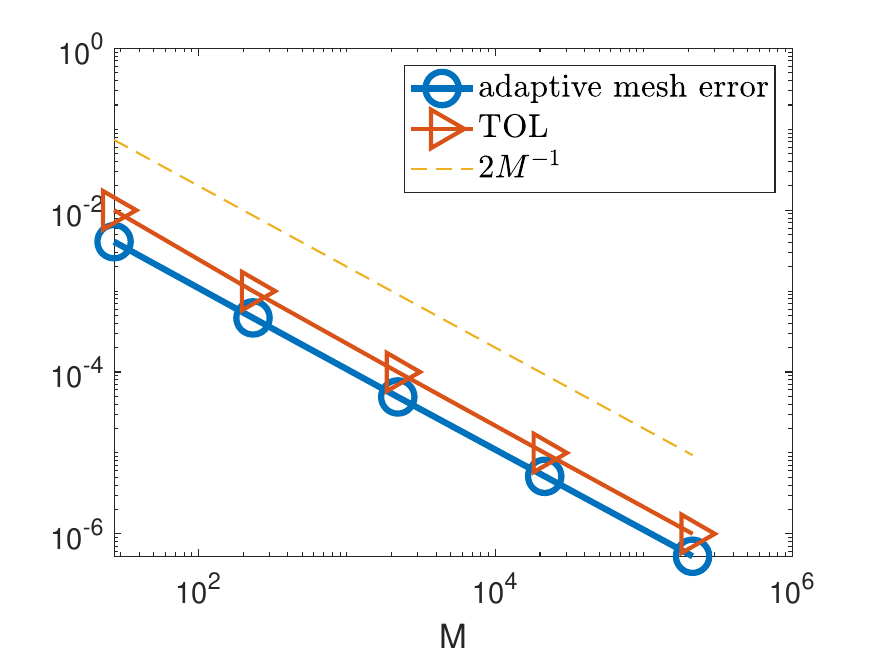}\hspace*{-0.0cm}%
\includegraphics[height=0.25\textwidth]{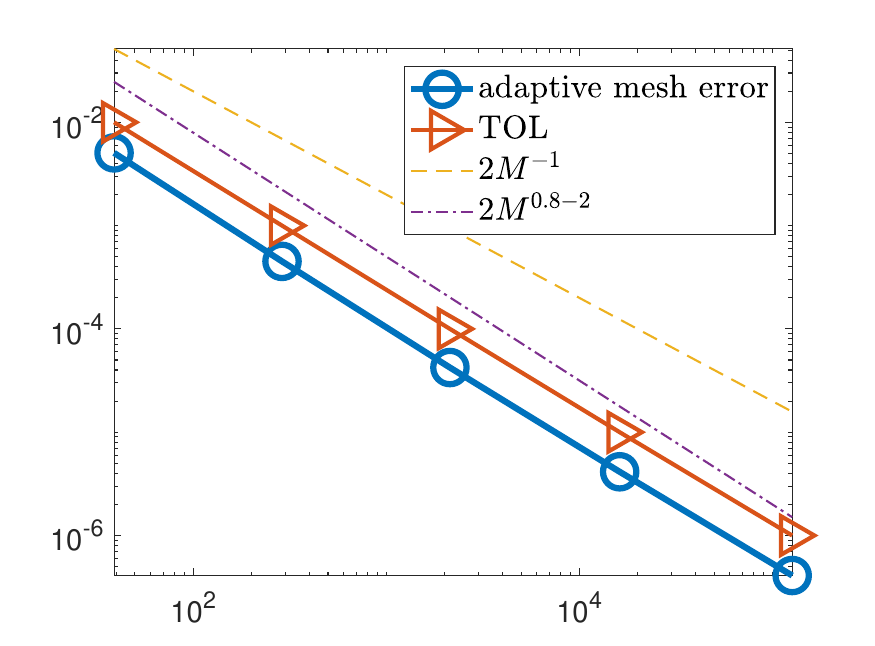}%
\hspace*{-0.0cm}\includegraphics[height=0.25\textwidth]{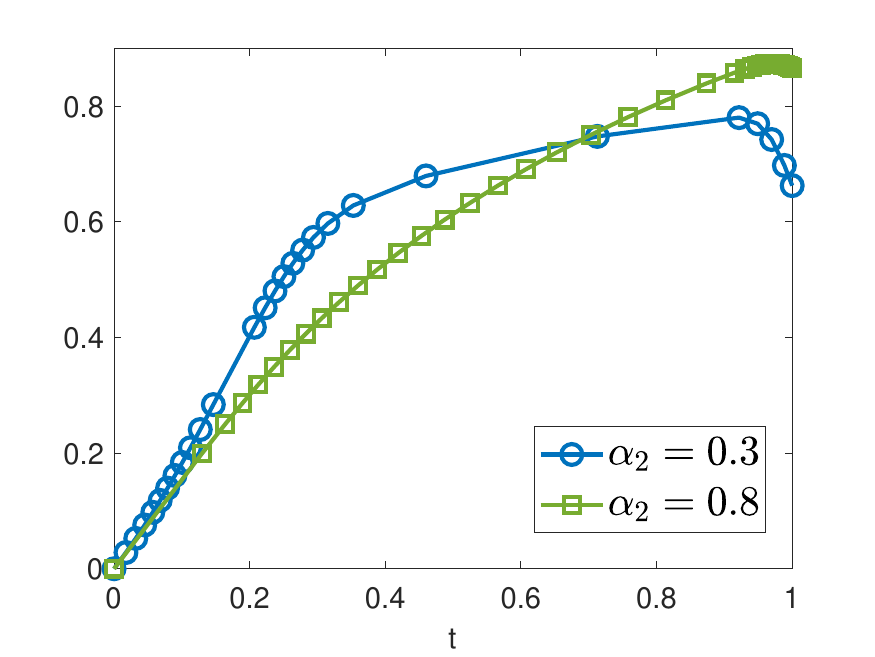}\hspace*{-0.45cm}
\end{center}
 \caption{\label{fig_ODE_alpha1}\it\small
Adaptive algorithm with $\RR(t)$ generated by $\EE=1-\exp(-10t)$ for Example~\ref{exa:5} with $\alpha_1=1$ and $c_1=1$: loglog graphs of 
$\max_{[0,T]}\vert e(t)\vert$ on the adaptive mesh and the corresponding $\TOL$ 
for $\alpha_2=0.3$ (left) and $\alpha_2=0.8$ (centre).
Right: computed solutions for this test problem obtained using $\TOL =10^{-2}$.}
 \end{figure}

    \begin{figure}[h!]
\begin{center}
\hspace*{-0.3cm}\includegraphics[height=0.25\textwidth]{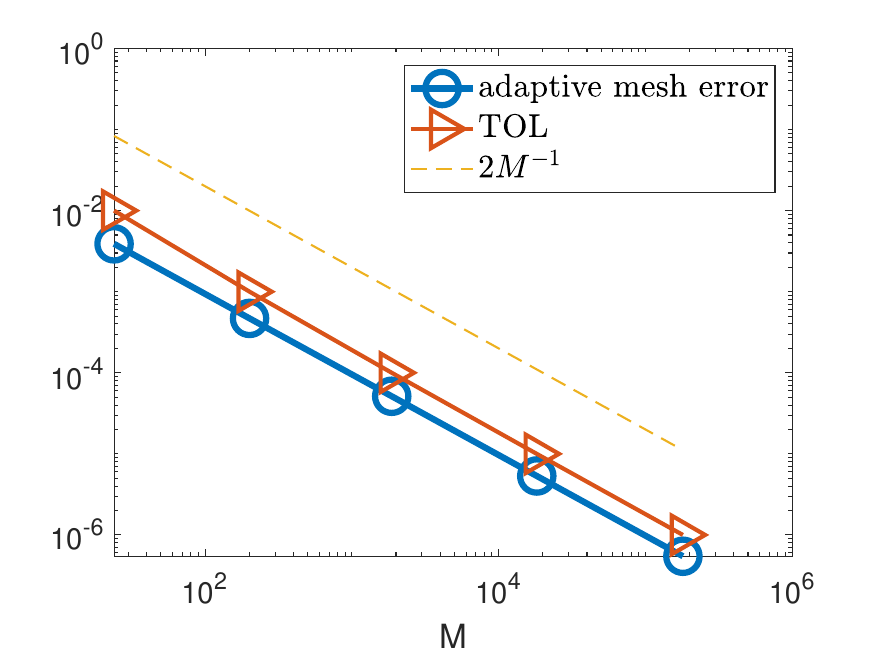}\hspace*{-0.0cm}%
\includegraphics[height=0.25\textwidth]{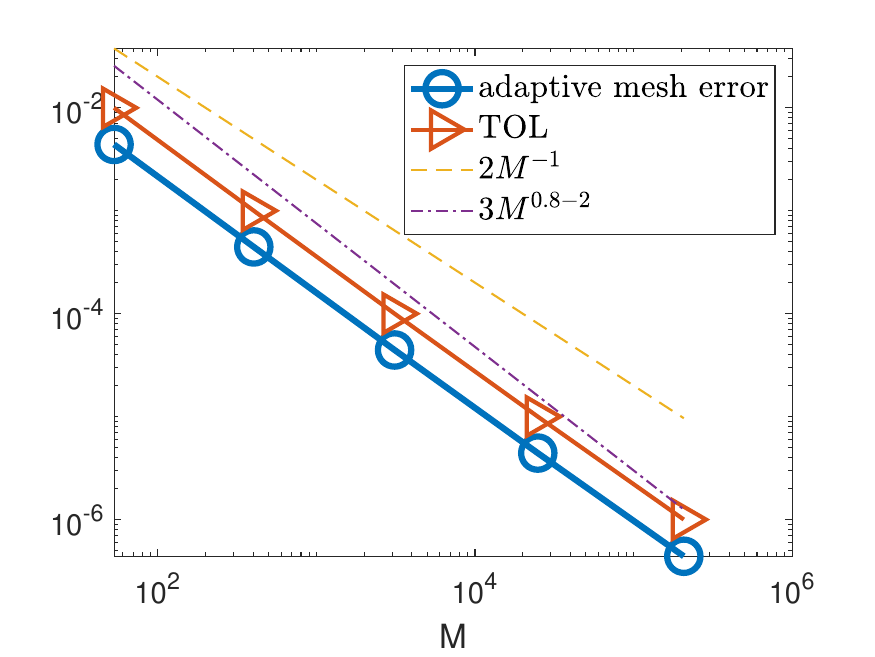}%
\hspace*{-0.0cm}\includegraphics[height=0.25\textwidth]{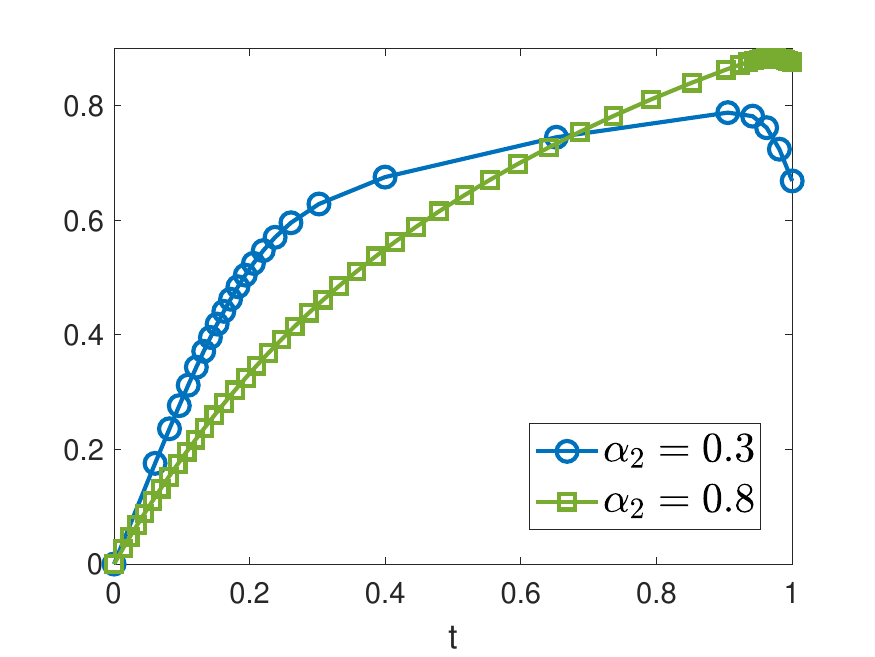}\hspace*{-0.45cm}
\end{center}
 \caption{\label{fig_ODE_alpha1_12}\it\small
Adaptive algorithm with $\RR_0(t)$  for Example~\ref{exa:5} with $\alpha_1=1$ and $c_1=\frac12$: loglog graphs of 
$\max_{[0,T]}\vert e(t)\vert$ on the adaptive mesh and the corresponding $\TOL$ 
for $\alpha_2=0.3$ (left) and $\alpha_2=0.8$ (centre).
Right: computed solutions for this test problem obtained using $\TOL =10^{-2}$.}
 \end{figure}

In the next example we return to our subdiffusion problem \eqref{problem}.

\begin{example}\label{exa:6}
\emph{Take $q_1$, $q_2$ and $f$ as in Example~\ref{exa:5}, with $c_1=\frac12$, while $\LL$, $u_0$, $\Omega$ and $\lambda=1$ are taken from Example~\ref{exa:4}.
Now we choose the temporal grid a priori to be uniform. Once the computed solution is obtained, we compute the residual
$\|R_h(\cdot, t)\|$ on a finer mesh, with 15 equidistant additional points between any consecutive time layers. }

\emph{Assuming that there exists a solution $\EE$ of $\left( \sum_{i=1}^{\ell} q_i(t)\, D _t ^{\alpha_i} +\lambda \right)\EE=\|R_h(\cdot, t)\|$, inequality \eqref{L2_error} gives an upper bound for the error, viz., $\|(u_h-u)(\cdot,t)\|\le \EE$. In practice, one finds a numerical approximation $\EE_h$ of $\EE$ on the above fine grid. }

\emph{It is important to note that the computed solution $u_h$ is a numerical approximation of the fractional  subdiffusion  problem with spatial derivatives, while the computation of $\EE_h$, although the latter is computed on a much finer temporal grid, is inexpensive, as $\EE(t)$ is a solution of an initial-value problem without spatial derivatives. }

\emph{See Figure~\ref{fig_par_uniform_grid} for results. }
\end{example}

    \begin{figure}[h!]
\begin{center}
\hspace*{-0.3cm}\includegraphics[height=0.25\textwidth]{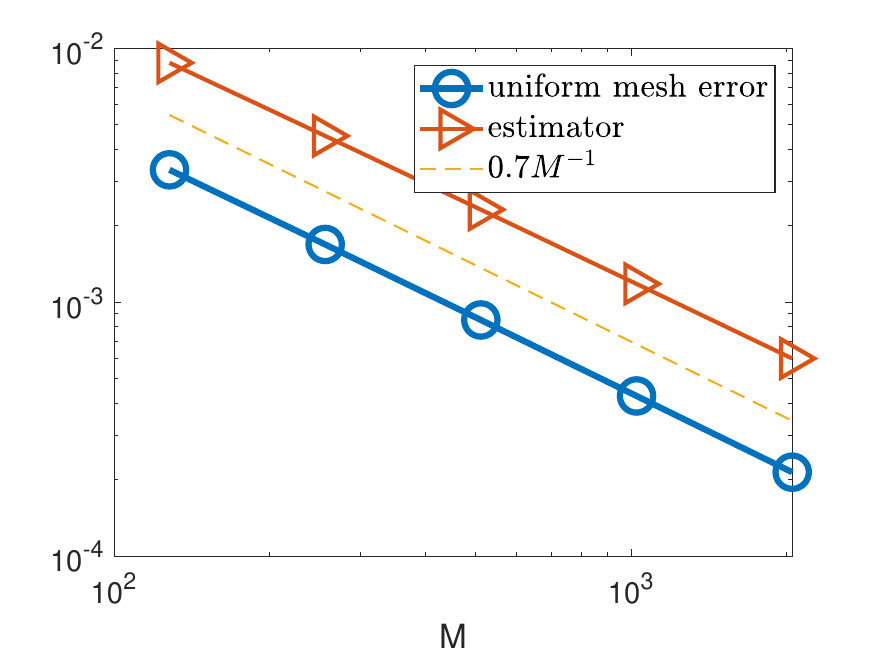}\hspace*{-0.0cm}%
\includegraphics[height=0.25\textwidth]{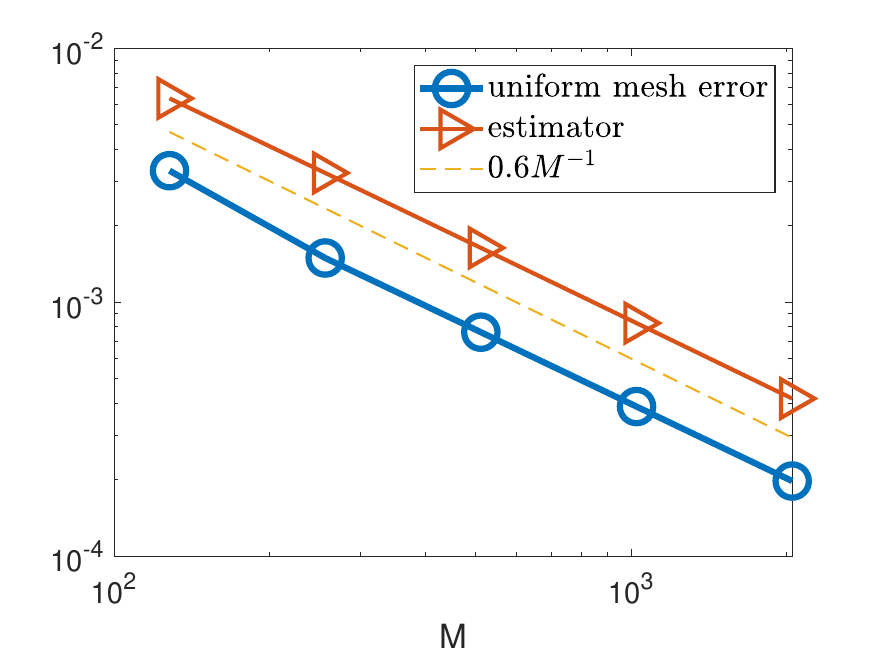}%
\hspace*{-0.0cm}\includegraphics[height=0.25\textwidth]{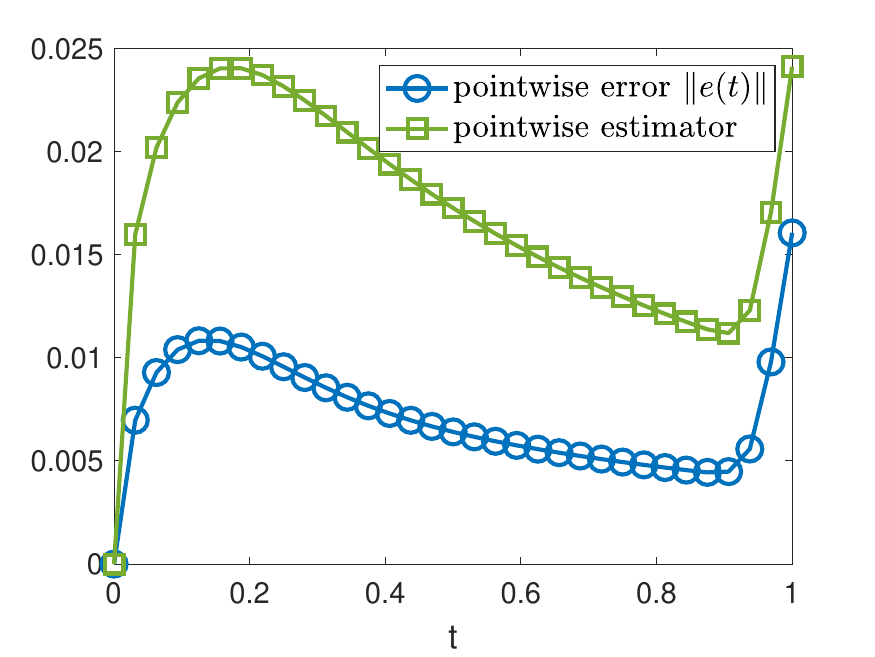}\hspace*{-0.45cm}
\end{center}
 \caption{\label{fig_par_uniform_grid}\it\small
A posteriori error estimation on the uniform temporal mesh for Example~\ref{exa:6} with $\alpha_1=1$
 and $c_1=\frac12$: loglog graphs of 
$\max_{[0,T]}\|e(t_j)\|$ and the corresponding estimator $\max_{t_j\in[0,T]}\EE_h(t_j)$ 
for $\alpha_2=0.3$ (left) and $\alpha_2=0.8$ (centre).
Right: pointwise-in-time error $\|e(t_j)\|$ and pointwise estimator $\EE_h(t_j)$ for $\alpha_2=0.8$, $M=32$.}
 \end{figure}

The numerical results in this section demonstrate that,  for many different types of data,  our algorithm based on the L1 scheme automatically adapts the given initial mesh to compute accurate numerical solutions. It gives excellent results for problems whose solutions have a weak singularity at $t=0$, without requiring the user to choose a suitable mesh --- while if the mesh is prescribed a priori, it can estimate the error in the  solution computed on this mesh (see Figure~\ref{fig_par_uniform_grid}).  It is equally good in cases where this weak singularity is absent.

\appendix
\section{A variant of Lemma~\ref{lem:v0w0}}\label{sec:app}
In this appendix we shall  prove a result (Lemma~\ref{lem:IVPhomog}) that complements Lemma~\ref{lem:v0w0}. This is done by using an explicit complex contour integral formula to derive a positivity property of the multinomial Mittag-Leffler function (Lemma~\ref{lem:beta>1}) that appears to be new.

Our argument starts with the following elementary result.

\begin{lemma}\label{lem:S(s)}
Let $m$ and $n$ be nonnegative integers with $m<n$. Set $S(s) = \sum_{j=0}^n k_js^{\gamma_j}$ for $s\in [0,\infty)$, where $0 = \gamma_0 < \gamma_1 <\dots < \gamma_n  \le 1$ and $k_j>0$ for $0\le j\le m$, \ $k_j<0$ for $m<j\le n$. Then the equation $S(s) =0$ has a unique solution $s_0\in (0,\infty)$, with $S(s)>0$ for $0\le s <s_0$ and $S(s)<0$ for $s_0 <s < \infty$.
\end{lemma}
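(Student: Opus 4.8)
The plan is to reduce the claim to the elementary fact that a finite sum of nonincreasing and strictly decreasing power functions is strictly decreasing. Since $\gamma_0=0$, one has $S(0)=k_0>0$ at once, so it suffices to analyse $S$ on $(0,\infty)$. There, writing $g(s):=s^{-\gamma_m}S(s)$, the functions $S(s)$ and $g(s)$ have the same sign for every $s>0$, and
\[
g(s)=\underbrace{\sum_{j=0}^{m} k_j\, s^{\gamma_j-\gamma_m}}_{=:\,A(s)}\;+\;\underbrace{\sum_{j=m+1}^{n} k_j\, s^{\gamma_j-\gamma_m}}_{=:\,B(s)}\,.
\]

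First I would record the monotonicity of the two pieces. In $A$ every exponent satisfies $\gamma_j-\gamma_m\le 0$ (with equality only when $j=m$) and every coefficient $k_j$ is positive, so each summand is a nonincreasing function of $s$ on $(0,\infty)$; hence $A$ is nonincreasing. In $B$ every exponent satisfies $\gamma_j-\gamma_m>0$ (because $m<n$ forces $\gamma_{m+1}>\gamma_m$ and the $\gamma_j$ are increasing) while every $k_j$ is negative, so each summand is strictly decreasing, and there is at least one summand; hence $B$, and therefore $g=A+B$, is continuous and strictly decreasing on $(0,\infty)$.

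Next I would evaluate the two boundary limits. As $s\to 0^+$, every summand of $B$ tends to $0$, while $A(s)\to k_0>0$ when $m=0$ and $A(s)\to+\infty$ when $m\ge 1$ (the $j=0$ term has exponent $-\gamma_m<0$ and positive coefficient, and the remaining terms of $A$ are nonnegative); in either case $\lim_{s\to 0^+}g(s)>0$. As $s\to\infty$, every summand of $A$ other than $j=m$ tends to $0$, so $A(s)\to k_m$, whereas the $j=n$ term of $B$ is $k_n s^{\gamma_n-\gamma_m}$ with $\gamma_n-\gamma_m>0$ and $k_n<0$, so $B(s)\to-\infty$ and hence $g(s)\to-\infty$. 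By continuity, strict monotonicity and the intermediate value theorem, $g$ then has a unique zero $s_0\in(0,\infty)$, with $g(s)>0$ on $(0,s_0)$ and $g(s)<0$ on $(s_0,\infty)$; multiplying through by $s^{\gamma_m}>0$ and recalling $S(0)=k_0>0$ yields precisely the stated conclusion for $S$.

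There is no genuine obstacle in this argument: it is a clean one-variable monotonicity-and-limits computation. The only point requiring a moment's care is the degenerate case $m=0$, in which $A$ is the constant $k_0$ rather than an unbounded function near $s=0$; the argument above is already phrased to cover this case, so no separate treatment is needed.
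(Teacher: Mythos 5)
Your proof is correct. Every step checks out: dividing by $s^{\gamma_m}$ does not change signs on $(0,\infty)$; the positive-coefficient terms acquire nonpositive exponents and are therefore nonincreasing, the negative-coefficient terms keep strictly positive exponents and are therefore strictly decreasing (and there is at least one of them since $m<n$), so $g$ is strictly decreasing; the limits $\lim_{s\to0^+}g(s)>0$ and $\lim_{s\to\infty}g(s)=-\infty$ are right, including the $m=0$ degeneracy you flag; and the intermediate value theorem finishes it.

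The route is genuinely different in presentation from the paper's, though the two are close cousins. The paper differentiates $S$ and establishes the implication ``$S(t)\le 0$ implies $S'(t)<0$'' via the chain $S'(t)<\gamma_{m+1}t^{-1}S(t)$, then argues that $S$ starts positive, is eventually negative, and can never attain a minimum while nonpositive, so once it crosses zero it stays negative. That differential inequality is equivalent to saying that $t^{-\gamma_{m+1}}S(t)$ is strictly decreasing, so the paper is implicitly doing the same normalization you do, just with exponent $\gamma_{m+1}$ instead of $\gamma_m$ (any exponent in $[\gamma_m,\gamma_{m+1}]$ works) and phrased through the derivative rather than through termwise monotonicity. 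What your version buys is economy: no differentiation, no ``no negative minimum'' argument, just monotonicity of individual power functions plus the intermediate value theorem, and the sign pattern and uniqueness drop out simultaneously from strict monotonicity of $g$. The paper's version has the flavour of the maximum-principle arguments used elsewhere in the article (compare Lemma 2.1), which is presumably why the authors chose it, but your argument is self-contained and arguably cleaner for this particular lemma.
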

\begin{proof}
If $t\in (0,\infty)$, then
\begin{align*}
S'(t) &=  \sum_{j=1}^n k_j \gamma_j t^{\gamma_j-1}
	=  \sum_{j=1}^m k_j \gamma_j t^{\gamma_j-1} -  \sum_{j=m+1}^n \vert k_j\vert  \gamma_j t^{\gamma_j-1} \\
	&\le \gamma_{m+1}t^{-1}\sum_{j=1}^m k_j  t^{\gamma_j} - \gamma_{m+1}t^{-1} \sum_{j=m+1}^n \vert k_j\vert t^{\gamma_j} \\
	&= \gamma_{m+1}t^{-1}\sum_{j=1}^n k_j  t^{\gamma_j} \\	
	&< \gamma_{m+1}t^{-1}\sum_{j=0}^n k_j  t^{\gamma_j} = \gamma_{m+1}t^{-1}S(t). 	
\end{align*}
Hence
\begin{equation}\label{S't}
S(t)\le 0 \ \text{ implies }S'(t)<0.
\end{equation}

One has $S(0)=k_0>0$. As $s\to\infty$, the term $k_ns^{\gamma_n}$ in $S(s)$ will dominate;  it follows that $S(s)<0$ for all sufficiently large~$s$. Hence $S(s)=0$ has at least one solution $s_0$ in $(0,\infty)$.

From \eqref{S't} it follows that $S'(s_0)<0$, so $S(s)<0$ on some interval $(s_0, s_0+\delta)$. Now \eqref{S't} ensures that $S$ can never reach a minimum on $(s_0, \infty)$, which implies that $S(s)<0$ for $s\in (s_0,\infty)$. Thus $s_0$ is the unique solution of $S(s)=0$.
\end{proof}

We now prove  a new positivity property of the multinomial Mittag-Leffler function that \tcb{is related to} Lemma~\ref{lem:beta<1}; this proof is in the spirit of classical analyses of Mittag-Leffler functions.  The argument used is partly based on \cite[pp.215--216]{PS11}, where a similar result was obtained for the simpler case of the  two-parameter Mittag-Leffler function $E_{\alpha,\beta}(t)$.

\begin{lemma}\label{lem:beta>1}
Assume  that $1 < \beta < 1+\alpha_1$ and $\lambda>0$. Then
\[
\tilde E(t):= E_{(\alpha_1,  \alpha_1-\alpha_\ell, \dots, \alpha_1-\alpha_3, \alpha_1-\alpha_2),\beta}
	(-\lambda t^\alpha_1, -q_\ell t^{ \alpha_1-\alpha_\ell}, \dots, -q_3t^{ \alpha_1-\alpha_3}, -q_2t^{ \alpha_1-\alpha_2})
	> 0
\]
for all $t> 0$.
\end{lemma}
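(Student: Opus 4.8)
The plan is to obtain an explicit integral representation of $t^{\beta-1}\tilde E(t)$ by inverting its Laplace transform around the branch cut $(-\infty,0]$, and then to combine the sign-change-once property supplied by Lemma~\ref{lem:S(s)} with a cancellation identity. Writing $\beta_1=\alpha_1$, $\beta_j=\alpha_1-\alpha_{\ell+2-j}$ for the subscripts appearing in $\tilde E$, a term-by-term Laplace transform of the series \eqref{ML2} (with the multinomial/geometric summation justified for $|p|$ large) gives
\[
\int_0^\infty e^{-pt}t^{\beta-1}\tilde E(t)\,dt=\frac{p^{\alpha_1-\beta}}{P(p)},\qquad P(p):=p^{\alpha_1}+\sum_{i=2}^{\ell}q_ip^{\alpha_i}+\lambda .
\]
Since $\mathrm{Im}\,P(\rho e^{i\theta})=\rho^{\alpha_1}\sin(\alpha_1\theta)+\sum_{i\ge2}q_i\rho^{\alpha_i}\sin(\alpha_i\theta)$ has a strict sign for $\theta\in(-\pi,\pi)\setminus\{0\}$ and $P>0$ on $(0,\infty)$, the integrand $p\mapsto p^{\alpha_1-\beta}/P(p)$ is analytic off $(-\infty,0]$ and decays like $|p|^{-\beta}$ with $\beta>1$; because $\beta<1+\alpha_1$ we have $\alpha_1-\beta>-1$, so the small arc at the origin contributes nothing when the Bromwich contour is collapsed onto the cut. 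This yields, for $t>0$,
\[
t^{\beta-1}\tilde E(t)=-\frac1\pi\int_0^\infty e^{-rt}\,\mathrm{Im}\!\left[\frac{(re^{i\pi})^{\alpha_1-\beta}}{P(re^{i\pi})}\right]dr=\int_0^\infty e^{-rt}H(r)\,dr ,
\]
with $H\in L^1(0,\infty)\cap C(0,\infty)$.

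A direct computation of the imaginary part gives $H(r)=-\dfrac{r^{\alpha_1-\beta}}{\pi\,|P(re^{i\pi})|^2}\,S(r)$, where
\[
S(r)=-r^{\alpha_1}\sin(\pi\beta)+\sum_{i=2}^{\ell}q_ir^{\alpha_i}\sin\!\bigl(\pi(\alpha_1-\alpha_i-\beta)\bigr)+\lambda\sin\!\bigl(\pi(\alpha_1-\beta)\bigr).
\]
From $1<\beta<1+\alpha_1\le2$ we get $\sin(\pi\beta)<0$ (coefficient of the highest power $r^{\alpha_1}$ is positive in $-S$, negative in $S$), $\sin(\pi(\alpha_1-\beta))<0$ (the constant term is positive in $-S$), while $\alpha_1-\alpha_i-\beta$ is strictly decreasing through the interval $(-2,0)$ as $\alpha_i$ increases, so in $-S$ the coefficients of the intermediate powers $r^{\alpha_i}$ are positive for the smaller $\alpha_i$ and negative for the larger ones. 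Hence $-S$ has exactly the coefficient pattern of Lemma~\ref{lem:S(s)} (positive coefficients on the low powers $0,\alpha_\ell,\dots$, negative on the high powers up to $\alpha_1\le1$, with at least one coefficient of each sign; any vanishing coefficient is simply dropped), so $-S$, and therefore $S$ and $H$, changes sign exactly once on $(0,\infty)$: there is $r_0>0$ with $H>0$ on $(0,r_0)$ and $H<0$ on $(r_0,\infty)$.

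To finish, let $t\to0^+$ in the representation above: dominated convergence together with $t^{\beta-1}\tilde E(t)\to0$ (valid since $\beta>1$ and $\tilde E$ is continuous at $0$) gives $\int_0^\infty H(r)\,dr=0$. Now fix $t>0$ and split the integral at $r_0$; using $e^{-rt}>e^{-r_0t}$ with $H\ge0$ on $(0,r_0)$, and $0<e^{-rt}\le e^{-r_0t}$ with $H\le0$ on $(r_0,\infty)$, we obtain
\[
t^{\beta-1}\tilde E(t)=\int_0^\infty e^{-rt}H(r)\,dr\ \ge\ e^{-r_0t}\int_0^\infty H(r)\,dr\ =\ 0 ,
\]
the inequality being strict because $(e^{-rt}-e^{-r_0t})H(r)\ge0$ and is strictly positive on a subset of $(0,r_0)$ of positive measure (indeed $H(r)\sim -\frac{\sin(\pi(\alpha_1-\beta))}{\pi\lambda}\,r^{\alpha_1-\beta}>0$ as $r\to0^+$). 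Thus $\tilde E(t)>0$ for every $t>0$. The main obstacle is the first paragraph: making the contour deformation fully rigorous — confirming $P$ is zero-free off $(-\infty,0]$ (including the boundary behaviour when $\alpha_1=1$, where the $p^{\alpha_1}$ term is entire), controlling the large arcs, and pinning down that $\beta<1+\alpha_1$ is precisely the condition that annihilates the arc near the origin. Everything afterwards is Lemma~\ref{lem:S(s)} plus the elementary weighted-integral estimate.
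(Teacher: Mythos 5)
Your argument is essentially the paper's: both collapse a contour representation of $\tilde E$ onto the branch cut to obtain $t^{\beta-1}\tilde E(t)=\int_0^\infty e^{-rt}H(r)\,dr$ with a kernel whose single sign change is supplied by Lemma~\ref{lem:S(s)} (your $-S(r)$ is exactly the paper's $v(s)$ under the substitution $s=r^{\alpha_1}$, and your zero-free/decay checks for $P$ mirror the paper's argument for deforming the contour of \cite[eq.(47)]{LG99}), and both conclude with a weighted-positivity estimate. The only differences are cosmetic: you finish by deriving $\int_0^\infty H=0$ from the $t\to0^+$ limit and comparing $e^{-rt}$ with $e^{-r_0t}$, whereas the paper picks a small $t_0$ with $I(t_0)>0$ by continuity and compares $t>t_0$ against $t_0$; also, your parenthetical on which of $S$, $-S$ carries the positive $r^{\alpha_1}$-coefficient is stated backwards, though the sign pattern you actually use for $-S$ (positive low powers, negative high powers) is correct and the conclusion is unaffected.
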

\begin{proof}
For each $t>0$, by Remark~\ref{rem:Esymm} and \cite[eq.(47)]{LG99} we have
\begin{align*}
\tilde E(t) &=  E_{( \alpha_1-\alpha_2, \alpha_1-\alpha_3,\dots, \alpha_1-\alpha_\ell,\alpha_1),\beta}
	(-q_2t^{ \alpha_1-\alpha_2},-q_3t^{ \alpha_1-\alpha_3},\dots,-q_\ell t^{ \alpha_1-\alpha_\ell},-\lambda t^{\alpha_1}) \notag\\
	&= \frac{t^{1-\beta}}{2\pi i} \int_{\gamma(R_m, -\pi, \pi)}
		\frac{e^{\zeta t} \zeta^{\alpha_1-\beta}}{\zeta^{\alpha_1} + \sum_{j=2}^\ell q_j\zeta^{\alpha_j} +\lambda}\,d\zeta,
\end{align*}
where $r := \max\left\{ 1, \left(\lambda+\sum q_j\right)^{1/(\alpha_1-\alpha_2)} \right\}$, and ${\gamma(R, \theta_1,\theta_2)}$ (for $R\ge 0$ and $-\pi\le\theta_1\le\theta_2\le\pi$) denotes the complex-plane Hankel contour that comprises the ray $\arg \zeta = \theta_1$ with $\vert\zeta\vert\ge R$, the arc $\vert\zeta\vert=R$ with $\theta_1\le \arg\zeta\le\theta_2$, and the ray $\arg \zeta = \theta_2$ with $\vert\zeta\vert\ge R$, and the contour is traversed in the direction of increasing $\arg\zeta$.

The substitution $w= \zeta^{\alpha_1}$ gives
\begin{equation}\label{E1}
\tilde E(t) =  \frac{t^{1-\beta}}{2\alpha_1\pi i} \int_{\gamma(r^{\alpha_1}, -\alpha_1\pi, \alpha_1\pi)}
		\frac{w^{(1-\beta)/\alpha_1} \exp\left( t w^{1/\alpha_1} \right)}
			{w+\sum_{j=2}^\ell q_jw^{\alpha_j/\alpha_1} +\lambda}\,dw
\end{equation}
Observe that if $w\in\gamma(r^{\alpha_1}, -\alpha_1\pi, \alpha_1\pi)$, then $\vert\arg w\vert \le \alpha_1\pi$ independently of $r$; hence
\[
\left\vert\arg\left(w+\sum_{j=2}^{\ell} q_jw^{\alpha_j/\alpha_1}\right)\right\vert \le \max\{ \alpha_1\pi, \alpha_2\pi,\dots, \alpha_\ell\pi \}
	=\alpha_1\pi < \pi,
\]
so (recall that $\lambda>0$) the denominator of the integrand will not vanish if we change the value of~$r$ in the contour, and consequently the value of the integral will not change (by Cauchy's integral theorem). Furthermore, we can permit $r\to 0$ because $\beta<1+\alpha_1$ ensures that the integral remains finite. Thus we can replace the contour $\gamma(r^{\alpha_1}, -\alpha_1\pi, \alpha_1\pi)$ in~\eqref{E1} by $\gamma(0, -\alpha_1\pi, \alpha_1\pi)$.

Next, set $w = se^{\pm i\alpha_1\pi}$ along the ray $\arg w = \pm\alpha_1\pi$ (choose same sign). This yields
\begin{align}
\tilde E(t) &= \frac{t^{1-\beta}}{2\alpha_1\pi i}  \left[  \int_{\infty}^0
	\frac{s^{(1-\beta)/\alpha_1}e^{-i(1-\beta)\pi} \exp\left( t s^{1/\alpha_1}e^{-i\pi} \right)e^{- i\alpha_1\pi}}
			{se^{-i\alpha_1\pi}+\sum_{j=2}^\ell q_js^{\alpha_j/\alpha_1}e^{-i\alpha_j\pi} +\lambda}  \right. \notag\\
	&\hspace{2cm}\left. + \int_0^\infty
		\frac{s^{(1-\beta)/\alpha_1}e^{i(1-\beta)\pi} \exp\left( t s^{1/\alpha_1}e^{i\pi} \right)e^{ i\alpha_1\pi}}
			{se^{i\alpha_1\pi}+\sum_{j=2}^\ell q_js^{\alpha_j/\alpha_1}e^{i\alpha_j\pi} +\lambda}  \right]\, ds \notag\\
	&=  \frac{t^{1-\beta}}{2\alpha_1\pi i} \int_0^\infty s^{(1-\beta)/\alpha_1} \exp\left(- t s^{1/\alpha_1}\right)
		\left[  \frac{e^{i\beta\pi}}{s+\xi} -  \frac{e^{-i\beta\pi}}{s+\bar\xi}  \right]\,ds,  \label{E2}
\end{align}
where $\xi:= \sum_{j=2}^\ell q_js^{\alpha_j/\alpha_1}e^{i(\alpha_1-\alpha_j)\pi} +\lambda e^{i\alpha_1\pi}$ and $\bar\xi$ is its complex conjugate. Now
\begin{align*}
\frac{e^{i\beta\pi}}{s+\xi} -  \frac{e^{-i\beta\pi}}{s+\bar\xi}
	&= \frac{s(e^{i\beta\pi}-e^{-i\beta\pi})+\bar\xi e^{i\beta\pi}-\xi e^{-i\beta\pi}}{s^2+\xi^2}
	= \frac{2is \sin \beta\pi + 2i \Im(\bar\xi e^{i\beta\pi})}{s^2+\xi^2}
	= \frac{2iv(s)}{s^2+\xi^2}\,,
\end{align*}
where $v(s) := s \sin \beta\pi + \sum_{j=2}^\ell q_j s^{\alpha_j/\alpha_1} \sin(\beta-\alpha_1+\alpha_j)\pi + \lambda\sin(\beta-\alpha_1)\pi$. Hence \eqref{E2} becomes
\begin{equation}\label{E3}
\tilde E(t) = \frac{t^{1-\beta}}{\alpha_1\pi } I(t), \quad\text{where }
	I(t):= \int_0^\infty  s^{(1-\beta)/\alpha_1} \exp\left(- t s^{1/\alpha_1}\right)\frac{v(s)}{s^2+\xi^2}\, ds.
\end{equation}

Note that $v(s)$ has exactly the same structure as $S(s)$ in Lemma~\ref{lem:S(s)}, since $0 < \alpha_j/\alpha_1 <1$, $1 < \beta < 1+\alpha_1$ and $\lambda>0$. Thus there exists $s_0>0$ such that $v(s)>0$ for $0<s<s_0$ and $v(s)<0$ for $s>s_0$. From Definition~\ref{def:multiML} we get $\tilde E(0) = 1/\Gamma(\beta)>0$. By continuity we can choose $t_0>0$ such that $\tilde E(t)>0$ on~$(0,t_0]$, which implies $I(t_0)>0$. That is, recalling the properties of~$s_0$,
\begin{equation}\label{E4}
 \int_0^{s_0}  s^{(1-\beta)/\alpha_1} \exp\left(- t_0 s^{1/\alpha_1}\right)\frac{v(s)}{s^2+\xi^2}\, ds
 	>  \int_{s_0}^\infty  s^{(1-\beta)/\alpha_1} \exp\left(- t_0 s^{1/\alpha_1}\right)\frac{|v(s)|}{s^2+\xi^2}\, ds.
\end{equation}
Then for any $t>t_0$, using \eqref{E4} we get
\begin{align*}
&\hspace{-2cm} \int_0^{s_0}  s^{(1-\beta)/\alpha_1} \exp\left(- t s^{1/\alpha_1}\right)\frac{v(s)}{s^2+\xi^2}\, ds \\
 	&\ge \exp\left(-(t-t_0) s_0^{1/\alpha_1}\right)
		\int_0^{s_0}  s^{(1-\beta)/\alpha_1} \exp\left(- t_0 s^{1/\alpha_1}\right)\frac{v(s)}{s^2+\xi^2}\, ds    \\
	&>  \exp\left(-(t-t_0) s_0^{1/\alpha_1}\right)
		\int_{s_0}^\infty  s^{(1-\beta)/\alpha_1} \exp\left(- t_0 s^{1/\alpha_1}\right)\frac{\vert v(s)\vert}{s^2+\xi^2}\, ds \\
 	&\ge  \int_{s_0}^\infty  s^{(1-\beta)/\alpha_1} \exp\left(- t s^{1/\alpha_1}\right)\frac{\vert v(s)\vert}{s^2+\xi^2}\, ds.
\end{align*}
Now move the  integral $ \int_{s_0}^\infty\dots$  to the left-hand side; this gives $I(t)>0$. Hence $\tilde E(t)>0$ for $t>t_0$ and we are done.
\end{proof}

We can now prove our variant of Lemma~\ref{lem:v0w0}.

\begin{lemma}\label{lem:IVPhomog}
Consider the homogeneous version of  the initial-value problem \eqref{IVP}:
\beq\label{IVP2}
D_t^{\bar\alpha}  y(t) + \lambda y(t) = 0\ \text{ for }0<t\le T,  \quad y(0)=1,
\eeq
where the $q_i$ are constants and $\lambda\ge 0$.
Then  this problem has a solution $y$, with $y(t)\ge 0$ for $t\in[0,T]$.
\end{lemma}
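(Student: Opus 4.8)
The plan is to build $y$ explicitly from the constant-coefficient machinery of Section~\ref{ssec_const} and then read off its sign from the positivity results for the multinomial Mittag-Leffler function. If $\lambda=0$ then $y\equiv1$ is the (unique, by Lemma~\ref{lem:v0w0}) solution, so assume $\lambda>0$; discarding the indices with $q_i=0$ (at least one term survives because $\sum q_i>0$), relabelling, and dividing through by the coefficient of the top-order term, we may also assume every $q_i>0$ and $q_1=1$. Then existence of $y$ is immediate from \eqref{w1} and Remark~\ref{rem:IVPconstsoln} with $v\equiv0$, $w_0=1$:
\[
y(t)=1-\lambda\int_{0}^{t}\FF_{(\alpha_1,\alpha_1-\alpha_\ell,\dots,\alpha_1-\alpha_2),\alpha_1}(s;\lambda,q_\ell,\dots,q_2)\,ds ,
\]
uniqueness being Lemma~\ref{lem:v0w0} again. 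Integrating the defining series of $\FF_{(\alpha_1,\dots),\alpha_1}$ term by term (legitimate since, after the factor $s^{\alpha_1-1}$, the sum is an entire function of its arguments with an integrable bound near $s=0$) gives $\int_0^t\FF_{(\alpha_1,\dots),\alpha_1}(s;\dots)\,ds=t^{\alpha_1}E_{(\alpha_1,\dots),\alpha_1+1}(\mathbf z)$ with $\mathbf z:=(-\lambda t^{\alpha_1},-q_\ell t^{\alpha_1-\alpha_\ell},\dots,-q_2 t^{\alpha_1-\alpha_2})$, so that $y(t)=1-\lambda t^{\alpha_1}E_{(\alpha_1,\dots),\alpha_1+1}(\mathbf z)=1+z_1E_{(\alpha_1,\dots),1+\alpha_1}(\mathbf z)$, where $z_1=-\lambda t^{\alpha_1}$.

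The heart of the argument is the elementary recursion
\[
E_{(\beta_1,\dots,\beta_\ell),\beta_0}(\mathbf z)=\frac1{\Gamma(\beta_0)}+\sum_{j=1}^{\ell}z_j\,E_{(\beta_1,\dots,\beta_\ell),\,\beta_0+\beta_j}(\mathbf z),
\]
which follows from Definition~\ref{def:multiML} via the identity $(k;k_1,\dots,k_\ell)=\sum_{j:\,k_j\ge1}(k-1;k_1,\dots,k_j-1,\dots,k_\ell)$ together with absolute convergence of the series. Applying it with $\beta_0=1$ and $(\beta_1,\dots,\beta_\ell)=(\alpha_1,\alpha_1-\alpha_\ell,\dots,\alpha_1-\alpha_2)$ and substituting the resulting expression for $1+z_1E_{(\alpha_1,\dots),1+\alpha_1}(\mathbf z)$ back into the formula for $y$, the constant terms cancel and one gets
\[
y(t)=E_{(\alpha_1,\alpha_1-\alpha_\ell,\dots,\alpha_1-\alpha_2),1}(\mathbf z)-\sum_{j=2}^{\ell}z_j\,E_{(\alpha_1,\alpha_1-\alpha_\ell,\dots,\alpha_1-\alpha_2),\,1+\beta_j}(\mathbf z).
\]
For each $j\in\{2,\dots,\ell\}$ one has $0<\beta_j<\alpha_1$, so $1<1+\beta_j<1+\alpha_1$ and Lemma~\ref{lem:beta>1} gives $E_{(\alpha_1,\dots),1+\beta_j}(\mathbf z)>0$; since $z_j<0$, each term $-z_jE_{(\alpha_1,\dots),1+\beta_j}(\mathbf z)$ is nonnegative. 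For the leading term, the exponent list $(\alpha_1,\alpha_1-\alpha_\ell,\dots,\alpha_1-\alpha_2)$ is strictly decreasing with all entries in $(0,1)$ (recall $\alpha_1<1$ here) and the coefficients $\lambda,q_\ell,\dots,q_2$ are positive, so Lemma~\ref{lem:beta<1} with $\beta=1$ gives $E_{(\alpha_1,\dots),1}(\mathbf z)\ge0$. Hence $y(t)\ge0$ for $t>0$, and $y(0)=1\ge0$.

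The main obstacle is not conceptual depth but two points of care. First, the borderline case $\alpha_1=1$, for which $\FF$ and Lemmas~\ref{lem:beta<1}--\ref{lem:beta>1} are stated with exponents strictly below $1$; here I would instead invoke Lemma~\ref{lem:special} (the surviving constant coefficient of $D_t^1$ being positive) to obtain a solution $y\in C^1[0,T]$, and then Lemma~\ref{lem:v0w0} with $v\equiv0\ge0$ and $w_0=1\ge0$ to conclude $y\ge0$. Second, one must rigorously justify the termwise integration of $\FF_{(\alpha_1,\dots),\alpha_1}$ and the rearrangement underlying the recursion, both of which reduce to absolute convergence of the multinomial Mittag-Leffler series. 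As an alternative to the recursion route, one could mimic the contour-integral computation of Lemma~\ref{lem:beta>1} applied directly to $y$: its Laplace transform is $\widetilde y(\zeta)=\bigl(\sum_i q_i\zeta^{\alpha_i-1}\bigr)/\bigl(\sum_i q_i\zeta^{\alpha_i}+\lambda\bigr)$, and deforming the Bromwich inversion onto a Hankel contour, substituting $w=\zeta^{\alpha_1}$, and collapsing to a real integral would reduce the sign question to a statement of the type of Lemma~\ref{lem:S(s)}.
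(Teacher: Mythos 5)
Your proposal is correct and follows essentially the same route as the paper: an explicit multinomial Mittag-Leffler representation of $y$, the recursion $E_{(\beta_1,\dots,\beta_\ell),\beta_0}=\tfrac1{\Gamma(\beta_0)}+\sum_j z_jE_{(\beta_1,\dots,\beta_\ell),\beta_0+\beta_j}$ (which the paper cites from Li--Liu--Yamamoto rather than rederiving) to rewrite $y$ as a sum of terms, and then Lemma~\ref{lem:beta<1} for the $\beta_0=1$ term and Lemma~\ref{lem:beta>1} for the terms with $1<\beta_0<1+\alpha_1$. Your explicit treatment of the borderline case $\alpha_1=1$ via Lemma~\ref{lem:special} and Lemma~\ref{lem:v0w0} is a sensible extra precaution that the paper's proof leaves implicit.
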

\begin{proof}
If $\lambda=0$ then $y(t)\equiv 1$ is the unique solution of~\eqref{IVP2} by \cite[Theorem 4.1]{LG99}. Thus we can assume that $\lambda>0$.
From \cite[Theorem 6]{Luch11} the solution of \eqref{IVP2} is
\[
y(t) = 1- \lambda t^{\alpha_1} E_{(\alpha_1,\alpha_1-\alpha_2,\alpha_1-\alpha_3,\dots,\alpha_1-\alpha_\ell),1+\alpha_1}
		(-\lambda t^{\alpha_1},-q_2t^{\alpha_1-\alpha_2},-q_3t^{\alpha_1-\alpha_3},\dots,-q_\ell t^{\alpha_1-\alpha_\ell}).
\]
But \cite[Lemma 3.1]{LLY15} states that for $m\ge 1$ one has
\[
\frac1{\Gamma(\beta_0)} + \sum_{j=1}^m z_jE_{(\beta_1, \dots,\beta_m),\beta_0+\beta_j}(z_1, \dots, z_m)
	= E_{(\beta_1, \dots,\beta_m),\beta_0}(z_1, \dots, z_m)
\]
for $0 <\beta_0 <2$ and $0<\beta_j<1$ ($j=1,\dots,m$) and any  $z_j\in\bR$. In particular this implies that
\begin{align*}
1 &- \lambda t^{\alpha_1} E_{(\alpha_1,\alpha_1-\alpha_2,\alpha_1-\alpha_3,\dots,\alpha_1-\alpha_\ell),1+\alpha_1}
		(-\lambda t^{\alpha_1},-q_2t^{\alpha_1-\alpha_2},-q_3t^{\alpha_1-\alpha_3},\dots,-q_\ell t^{\alpha_1-\alpha_\ell}) \\
	&- \sum_{j=2}^\ell q_jt^{\alpha_1-\alpha_j}
		E_{(\alpha_1,\alpha_1-\alpha_2,\alpha_1-\alpha_3,\dots,\alpha_1-\alpha_\ell),1+\alpha_1-\alpha_j}
		(-\lambda t^{\alpha_1},-q_2t^{\alpha_1-\alpha_2},-q_3t^{\alpha_1-\alpha_3},\dots,-q_\ell t^{\alpha_1-\alpha_\ell}) \\
	&\qquad= E_{(\alpha_1,\alpha_1-\alpha_2,\alpha_1-\alpha_3,\dots,\alpha_1-\alpha_\ell),1}
		(-\lambda t^{\alpha_1},-q_2t^{\alpha_1-\alpha_2},-q_3t^{\alpha_1-\alpha_3},\dots,-q_\ell t^{\alpha_1-\alpha_\ell})
\end{align*}
Hence, using Remark~\ref{rem:Esymm}, we get
\begin{align*}
y(t) &= \sum_{j=2}^\ell q_jt^{\alpha_1-\alpha_j}
		E_{(\alpha_1,\alpha_1-\alpha_2,\alpha_1-\alpha_3,\dots,\alpha_1-\alpha_\ell),1+\alpha_1-\alpha_j}
		(-\lambda t^{\alpha_1},-q_2t^{\alpha_1-\alpha_2},-q_3t^{\alpha_1-\alpha_3},\dots,-q_\ell t^{\alpha_1-\alpha_\ell}) \\
	&\qquad+ E_{(\alpha_1,\alpha_1-\alpha_2,\alpha_1-\alpha_3,\dots,\alpha_1-\alpha_\ell),1}
		(-\lambda t^{\alpha_1},-q_2t^{\alpha_1-\alpha_2},-q_3t^{\alpha_1-\alpha_3},\dots,-q_\ell t^{\alpha_1-\alpha_\ell})\\
	&= \sum_{j=2}^\ell q_jt^{\alpha_1-\alpha_j}
		E_{(\alpha_1,\alpha_1-\alpha_\ell,\dots,\alpha_1-\alpha_3,\alpha_1-\alpha_2),1+\alpha_1-\alpha_j}
		(-\lambda t^{\alpha_1},-q_\ell t^{\alpha_1-\alpha_\ell},\dots,-q_3t^{\alpha_1-\alpha_3},-q_2 t^{\alpha_1-\alpha_2}) \\
	&\qquad+ E_{(\alpha_1,\alpha_1-\alpha_\ell,\dots,\alpha_1-\alpha_3,\alpha_1-\alpha_2),1}
		(-\lambda t^{\alpha_1},-q_\ell t^{\alpha_1-\alpha_\ell},\dots,-q_3t^{\alpha_1-\alpha_3},-q_2 t^{\alpha_1-\alpha_2})\\
	&= \sum_{j=2}^\ell  q_j \FF_{(\alpha_1,\alpha_1-\alpha_\ell, \dots,\alpha_1-\alpha_3,\alpha_1-\alpha_2),1+\alpha_1-\alpha_j}
			(t; \lambda, q_\ell, \dots, q_3, q_2) \\
	&\qquad + \FF_{(\alpha_1,\alpha_1-\alpha_\ell, \dots,\alpha_1-\alpha_3,\alpha_1-\alpha_2),1}
			(t; \lambda, q_\ell, \dots, q_3, q_2).
\end{align*}
The result now follows by applying Lemma~\ref{lem:beta<1} to the term $\FF_{(\dots),1}$ and Lemma~\ref{lem:beta>1} to each term $\FF_{(\dots),1+\alpha_1-\alpha_j}$.
\end{proof}

\bibliographystyle{plain}
\bibliography{multitermApost}

\begin{thebibliography}{10}

\bibitem{AS64}
Milton Abramowitz and Irene~A. Stegun.
\newblock {\em Handbook of mathematical functions with formulas, graphs, and
  mathematical tables}.
\newblock National Bureau of Standards Applied Mathematics Series, No. 55. U.
  S. Government Printing Office, Washington, D.C., 1964.
\newblock For sale by the Superintendent of Documents.

\bibitem{Baz21}
Emilia Bazhlekova.
\newblock Completely monotone multinomial {M}ittag-{L}effler type functions and
  diffusion equations with multiple time-derivatives.
\newblock {\em Fract. Calc. Appl. Anal.}, 24(1):88--111, 2021.

\bibitem{Bru17}
Hermann Brunner.
\newblock {\em Volterra integral equations}, volume~30 of {\em Cambridge
  Monographs on Applied and Computational Mathematics}.
\newblock Cambridge University Press, Cambridge, 2017.
\newblock An introduction to theory and applications.

\bibitem{BHY15}
Hermann Brunner, Houde Han, and Dongsheng Yin.
\newblock The maximum principle for time-fractional diffusion equations and its
  application.
\newblock {\em Numer. Funct. Anal. Optim.}, 36(10):1307--1321, 2015.

\bibitem{ChenSty2022}
Hu~Chen and Martin Stynes.
\newblock Using {C}omplete {M}onotonicity to {D}educe {L}ocal {E}rror
  {E}stimates for {D}iscretisations of a {M}ulti-{T}erm {T}ime-{F}ractional
  {D}iffusion {E}quation.
\newblock {\em Comput. Methods Appl. Math.}, 22(1):15--29, 2022.

\bibitem{DLMF15}
A.~B.~Olde Daalhuis.
\newblock {Digital Library of Mathematical Functions, Chapter 15 Hypergeometric
  Function}.
\newblock \url{https://dlmf.nist.gov/15}.
\newblock [Online; accessed 21-Jan-2022].

\bibitem{Diet10}
Kai Diethelm.
\newblock {\em The analysis of fractional differential equations}, volume 2004
  of {\em Lecture Notes in Mathematics}.
\newblock Springer-Verlag, Berlin, 2010.
\newblock An application-oriented exposition using differential operators of
  Caputo type.

\bibitem{Evans10}
Lawrence~C. Evans.
\newblock {\em Partial differential equations}, volume~19 of {\em Graduate
  Studies in Mathematics}.
\newblock American Mathematical Society, Providence, RI, second edition, 2010.

\bibitem{FraKopInPrep}
Sebastian Franz and Natalia Kopteva.
\newblock Pointwise-in-time a posteriori error control for higher-order
  discretizations of time-fractional parabolic equations.
\newblock In preparation.

\bibitem{GTbook}
David Gilbarg and Neil~S. Trudinger.
\newblock {\em Elliptic partial differential equations of second order}.
\newblock Classics in Mathematics. Springer-Verlag, Berlin, 2001.
\newblock Reprint of the 1998 edition.

\bibitem{JinBook}
Bangti Jin.
\newblock {\em Fractional differential equations---an approach via fractional
  derivatives}, volume 206 of {\em Applied Mathematical Sciences}.
\newblock Springer, Cham, [2021] \copyright 2021.

\bibitem{JLZ19}
Bangti Jin, Raytcho Lazarov, and Zhi Zhou.
\newblock Numerical methods for time-fractional evolution equations with
  nonsmooth data: a concise overview.
\newblock {\em Comput. Methods Appl. Mech. Engrg.}, 346:332--358, 2019.

\bibitem{KopMC19}
Natalia Kopteva.
\newblock Error analysis of the {L}1 method on graded and uniform meshes for a
  fractional-derivative problem in two and three dimensions.
\newblock {\em Math. Comp.}, 88(319):2135--2155, 2019.

\bibitem{KopMaxPrin}
Natalia Kopteva.
\newblock Maximum principle for time-fractional parabolic equations with a
  reaction coefficient of arbitrary sign, 2022.
\newblock arXiv:2202.10220.

\bibitem{KopAML22}
Natalia Kopteva.
\newblock Pointwise-in-time a posteriori error control for time-fractional
  parabolic equations.
\newblock {\em Appl. Math. Lett.}, 123:Paper No. 107515, 8, 2022.

\bibitem{LLY15}
Zhiyuan Li, Yikan Liu, and Masahiro Yamamoto.
\newblock Initial-boundary value problems for multi-term time-fractional
  diffusion equations with positive constant coefficients.
\newblock {\em Appl. Math. Comput.}, 257:381--397, 2015.

\bibitem{LG99}
Yurii Luchko and Rudolf Gorenflo.
\newblock An operational method for solving fractional differential equations
  with the {C}aputo derivatives.
\newblock {\em Acta Math. Vietnam.}, 24(2):207--233, 1999.

\bibitem{Luch09}
Yury Luchko.
\newblock Maximum principle for the generalized time-fractional diffusion
  equation.
\newblock {\em J. Math. Anal. Appl.}, 351(1):218--223, 2009.

\bibitem{Luch11}
Yury Luchko.
\newblock Initial-boundary problems for the generalized multi-term
  time-fractional diffusion equation.
\newblock {\em J. Math. Anal. Appl.}, 374(2):538--548, 2011.

\bibitem{MKS98}
R.~Metzler, J.~Klafter, and I.M. Sokolov.
\newblock Anomalous transport in external fields: Continuous time random walks
  and fractional diffusion equations extended.
\newblock {\em Phys. Rev. E}, 58(2):1621--1633, 1998.

\bibitem{PS11}
A.~Yu. Popov and A.~M. Sedletski\u{\i}.
\newblock Distribution of roots of {M}ittag-{L}effler functions.
\newblock {\em Sovrem. Mat. Fundam. Napravl.}, 40:3--171, 2011.
\newblock Translation in \emph{{J}. {M}ath. {S}ci. ({N.Y.})} 190(2):209--409,
  2013.

\bibitem{SBMB03}
Rina Schumer, David~A. Benson, Mark~M. Meerschaert, and Boris Baeumer.
\newblock Fractal mobile/immobile solute transport.
\newblock {\em Water Resour. Res.}, 39(10):Paper No. 1286, 2003.

\bibitem{StyL1survey}
Martin Stynes.
\newblock A survey of the {L1} scheme in the discretisation of time-fractional
  problems.
\newblock {\em Numer. Math. Theor. Meth. Appl.}, 2022.
\newblock ({T}o appear).

\bibitem{SORG17}
Martin Stynes, Eugene O'Riordan, and Jos\'e~Luis Gracia.
\newblock Error analysis of a finite difference method on graded meshes for a
  time-fractional diffusion equation.
\newblock {\em SIAM J. Numer. Anal.}, 55(2):1057--1079, 2017.

\bibitem{Vai16}
Gennadi Vainikko.
\newblock Which functions are fractionally differentiable?
\newblock {\em Z. Anal. Anwend.}, 35(4):465--487, 2016.

\bibitem{YZW20}
Zhiwei Yang, Xiangcheng Zheng, and Hong Wang.
\newblock A variably distributed-order time-fractional diffusion equation:
  analysis and approximation.
\newblock {\em Comput. Methods Appl. Mech. Engrg.}, 367:113118, 16, 2020.

\bibitem{ZW_IMAJNA2021}
Xiangcheng Zheng and Hong Wang.
\newblock Optimal-order error estimates of finite element approximations to
  variable-order time-fractional diffusion equations without regularity
  assumptions of the true solutions.
\newblock {\em IMA J. Numer. Anal.}, 41(2):1522--1545, 2021.

\end{thebibliography}

\section*{Declarations}
\begin{itemize}
\item Conflict of interest -- The authors declare that they have no conflict of interest.
\item Availability of data and materials -- Not applicable.
\end{itemize}

\end{document}